\newcommand{\overbar}[1]{\mkern 1.5mu\overline{\mkern-1.5mu#1\mkern-1.5mu}\mkern 1.5mu}
    \newtheorem{theorem}{Theorem}[section]
    \newtheorem{lemma}{Lemma}[section]
    \newtheorem{definition}{Definition}[section]
    \newtheorem{corollary}{Corollary}[section]
    \numberwithin{equation}{section}
    \newtheorem{remark}{Remark}[section]
    \numberwithin{equation}{section}
    \numberwithin{figure}{section}
    \newcommand{\Var}{\mathrm{Var}}
    \newcommand{\wh}{\widehat}
    \newcommand{\wt}{\widetilde}
    \newcommand{\newchapter}[3] 
	{                           
        \chapter[#2]{#3}
        \chaptermark{#1}
        \thispagestyle{myheadings}
	}
\renewenvironment{titlepage}
 {%
  \if@twocolumn
    \@restonecoltrue\onecolumn
  \else
    \@restonecolfalse\newpage
  \fi
  \thispagestyle{empty}%
 }
 {%
  \if@restonecol
    \twocolumn
  \else
    \newpage
  \fi
 }
\begin{document}

\begin{titlepage}
\title{Extremes of locally stationary Gaussian and chi fields on manifolds\\[20pt]}
%
\author{Wanli Qiao \\
    Department of Statistics \\
George Mason University \\
4400 University Drive, MS 4A7 \\
Fairfax, VA 22030\\
USA\\
Email: \href{mailto:wqiao@gmu.edu}{wqiao@gmu.edu}\\}
\date{\today}
\maketitle
\vspace*{-0.8cm}

\begin{abstract} \noindent Depending on a parameter $h\in (0,1]$, let $\{X_h(\bm{t})$, $\bm{t}\in\mathcal{M}_h\}$ be a class of centered Gaussian fields indexed by compact manifolds $\mathcal{M}_h$. For locally stationary Gaussian fields $X_h$, we study the asymptotic excursion probabilities of $X_h$ on $\mathcal{M}_h$. Two cases are considered: (i) $h$ is fixed and (ii) $h\rightarrow0$. These results are extended to obtain the limit behaviors of the extremes of locally stationary $\chi$-fields on manifolds. 
\end{abstract}
\vfill 
{\small This research was partially support by the NSF-grant DMS 1821154\\
{\em AMS 2000 subject classifications.} Primary 60G70, 60G15.\\
{\em Keywords and phrases.} Local stationarity, excursion probabilities, Gaussian fields, chi-fields, Voronoi diagrams, positive reach}

\end{titlepage}
\section{Introduction} \label{intro} 
We study the following two related problems in this manuscript. \\
 (i) Let $\{X(\bm{t})$, $\bm{t}\in\mathcal{M}\}$ be a centered Gaussian field indexed on a compact submanifold $\mathcal{M}$ of $\mathbb{R}^n$. We derive the asymptotic form of the excursion probability
 \begin{align}\label{resultintro0}
 \mathbb{P} \left(\sup_{\bm{t} \in\mathcal{M} }X(\bm{t}) > u\right),\; \text{as } u\rightarrow\infty.
 \end{align}
(ii) Let $ \left\{X_h(\bm{t}), \bm{t} \in \mathcal {M}_h\right\}_{h\in(0,1]}$ be a class of centered Gaussian fields, where $\mathcal {M}_h$ are compact submanifolds of ${\mathbb R}^n$. Suppose that we have the structure $\mathcal {M}_h=\mathcal {M}_{h,1} \times \mathcal {M}_{h,2}$ such that $\bm{t}=(\bm{t}_{(1)}^T, \bm{t}_{(2)}^T)^T\in\mathcal {M}_h$ means $\bm{t}_{(1)}\in\mathcal {M}_{h,1}$ and $\bm{t}_{(2)}\in\mathcal {M}_{h,2}$, where we allow $\mathcal {M}_{h,2}$ to be a null set. The Gaussian fields $X_h(\bm{t})$ we consider has a rescaled form $X_h(\bm{t}) = \overbar X_h(\bm{t}_{(1)}/h,\bm{t}_{(2)}), \bm{t} \in \mathcal {M}_h$ for some $\overbar X_h$ satisfying a local stationarity condition. We derive the following limit result 
\begin{align}\label{resultintro}
\lim_{h\rightarrow 0}\mathbb{P}\left(a_h\left(\sup_{\bm{t}\in\mathcal{M}_h} X_h(\bm{t}) - b_h\right)\leq z\right) = e^{-e^{-z}},
\end{align}
for some $a_h, b_h \in {\mathbb R}_+$ and fixed $z \in {\mathbb R}$. \\

While there is a large amount of literature on excursion probabilities of Gaussian processes or fields (see, e.g., Adler and Taylor \cite{Adler:2007}, and Aza\"{i}s and Wschebor \cite{Azais:2009}), most of the existing work only considers index sets $\mathcal{M}$ (or $\mathcal{M}_h$) of dimension $n$ (the same as the ambient Euclidean space), while we focus on Gaussian fields indexed by manifolds that can be low-dimensional. \\

For problem (i), some relevant results can be found in Mikhaleva and Piterbarg {\cite{Mikhaleva:1997}, Piterbarg and Stamatovich \cite{Piterbarg:2001}, and Cheng \cite{Cheng:2017}. Compared with these works, the framework of our result is more general in the following aspects: First of all, Cheng \cite{Cheng:2017} studies the excursion probabilities of {\em locally isotropic} Gaussian random fields on manifolds, where local isotropicity means the variance between two local points only depends on their (geodesic) distance, while we consider {\em locally stationary} Gaussian fields, for which not only the distance between the points but also their locations are involved in the variance. Furthermore, in Mikhaleva and Piterbarg {\cite{Mikhaleva:1997} and Piterbarg and Stamatovich \cite{Piterbarg:2001}, the Gaussian fields are assumed to be indexed by $\mathbb{R}^n$, while we only require the index sets to be the manifolds. As pointed out in Cheng \cite{Cheng:2017}, it is not clear whether one can always find a Gaussian field indexed by $\mathbb{R}^n$ whose restriction on $\mathcal{M}$ is $X(\bm{t})$. Also see Cheng and Xiao \cite{Cheng:2016} for some further arguments on this point. In addition, all the above works assume that the manifolds are smooth ($C^\infty$), while we consider a much larger class of manifolds (only satisfying a {\em positive reach} condition). In fact, the properties of positive reach play a critical role in the geometric construction in our proofs. \\ 

For problem (ii), the study in Qiao and Polonik \cite{Qiao:2018} corresponds to a special case of (\ref{resultintro}) when $\mathcal{M}_h\equiv\mathcal{M}$ for some manifold $\mathcal{M}$ independent of $h$, and $\mathcal{M}_{h,2}=\emptyset$. They use some ideas from Mikhaleva and Piterbarg {\cite{Mikhaleva:1997}  and also assume that $X_h$ is indexed by a neighborhood of higher dimensions around $\mathcal{M}$, while we only need $X_h$ to be indexed by the manifolds $\mathcal{M}_h$. This weaker requirement for the Gaussian fields finds broader applications when the Gaussian fields are observable or can be approximated only on low-dimensional manifolds. See (\ref{crfunction}) below for example. Also, by using the assumed structure of $\mathcal{M}_h$, only rescaling the parameters $\bm{t}_1$ allows us to apply (\ref{resultintro}) to get asymptotic extreme value distributions of $\chi$-fields on manifolds, which in fact is one of the motivations of this work, as described below. \\

Let $ \{\bm{X}(\bm{s}), \bm{s} \in \mathcal{M}\}$ be a $p$-dimensional Gaussian vector field, where $\bm{X}=(X_{1},\cdots,X_{p})^T$ has zero mean and identity variance-covariance matrix. Note that we have suppressed the possible dependence of $\bm{X}$ and $\mathcal{M}$ on $h$. Define
\begin{align}\label{chifielddef}
\chi(\bm{s})= [X_{1}^2(\bm{s})+\cdots+X_{p}^2(\bm{s})]^{1/2} , \; \bm{s} \in \mathcal{M}, 
\end{align}
which is called a $\chi$-field, where we allow the components $X_{i}(\bm{s}_i)$ and $X_{j}(\bm{s}_j)$ to be dependent, if $\bm{s}_i\neq \bm{s}_j$. Let $\mathbb{S}^{p-1}=\{x\in\mathbb{R}^p:\; \|x\|=1\}$ be the unit $(p-1)$-sphere. Using the property of Euclidean norm, we have 
\begin{align}\label{relation}
\sup_{\bm{s}\in\mathcal{M}}\chi(\bm{s}) = \sup_{\bm{s}\in\mathcal{M}, {\bm{v}}\in \mathbb{S}^{p-1}} Y_h(\bm{s}, {\bm{v}}), 
\end{align}
where ${\bm{v}}=(v_1,\cdots,v_p)\in\mathbb{R}^p$ and
\begin{align*}
Y(\bm{s}, {\bm{v}}) = X_{1}(\bm{s})v_1 + \cdots + X_{p}(\bm{s})v_p, \; \bm{s}\times {\bm{v}}\in\mathcal{M} \times \mathbb{S}^{p-1} .
\end{align*}
Note that $Y(\bm{s}, {\bm{v}})$ is a zero-mean and unit-variance Gaussian field on $\mathcal{M}\times \mathbb{S}^{p-1}$. Using the relation in (\ref{relation}) and by applying the results in (\ref{resultintro0}) and (\ref{resultintro}), we can study the asymptotic excursion probabilities of $\sup_{\bm{s}\in\mathcal{M}}\chi(\bm{s})$ as well as obtain a result in the form of
%
\begin{align}\label{resultintrochi}
\lim_{h\rightarrow 0}\mathbb{P}\left(a_h\left(\sup_{\bm{s}\in\mathcal{M}} \chi(\bm{s}/h) - b_h\right)\leq z\right) = e^{-e^{-z}}.
\end{align}

The result in (\ref{resultintrochi}) has the following two interesting applications. We consider a vector-valued signal plus noise model
\begin{align}
\wh{{\bm{f}}}_h(\bm{s}) = {\bm{f}}(\bm{s}) + \bm{X}(\bm{s}/h), \; \bm{s}\in\mathcal{M},
\end{align}
where ${\bm{f}}(\bm{s})$ is a $p$-dimensional signal, $\bm{X}(\bm{s})$ is the noise modeled by the Gaussian vector field considered above. We assume that only $\wh{{\bm{f}}}_h(\bm{s})$ is directly observable. Given $\alpha\in(0,1)$, let $z_\alpha$ be such that $\exp(-\exp(-z_\alpha))=1-\alpha$.\\

(a) Suppose that $\mathcal{M}$ is known, and the inference for the signal ${\bm{f}}(\bm{s})$ is of interest. We have the following asymptotic $(1-\alpha)$ confidence tube for ${\bm{f}}(\bm{s})$:
\begin{align}\label{crfunction}
\mathcal{G}_h(\bm{s}) := \left\{{\bm{g}}\in\mathbb{R}^p:\; a_h\left(\|\wh{{\bm{f}}}_h(\bm{s}) - {\bm{g}} \| - b_h\right)\leq z_\alpha\right\}, \; \bm{s}\in\mathcal{M}.
\end{align}
In other words, $\mathbb{P}({\bm{f}}(\bm{s}) \in \mathcal{G}_h(\bm{s}),\; \forall \bm{s}\in\mathcal{M}) \rightarrow 1-\alpha$, as $h\rightarrow0$.\\

(b) Suppose that the manifold $\mathcal{M}$ is unknown but implicitly defined by $\mathcal{M}=\{\bm{s}\in\mathcal{A}: {\bm{f}}(\bm{s})={\bm{g}}_0\}$, where $\mathcal{A}\subset\mathbb{R}^n$ is a known neighborhood of $\mathcal{M}$ (say, a unit cube), and ${\bm{g}}_0$ is a known $p$-dimensional vector so that $\mathcal{M}$ is the intersection of multiple level sets. Suppose that $\wh{{\bm{f}}}_h(\bm{s})$ is observable on $\mathcal{A}$, and the inference for the manifold $\mathcal{M}$ is of interest. We have the following asymptotic $(1-\alpha)$ confidence region for $\mathcal{\mathcal{M}}$:
\begin{align}\label{crmanifold}
\mathcal{F}_h := \left\{\bm{s}\in \mathcal{A}:\; a_h\left( \|\wh{{\bm{f}}}_h(\bm{s}) - {\bm{g}}_0 \| - b_h\right)\leq z_\alpha\right\}.
\end{align}
That is, $\mathbb{P}(\mathcal{M}\subset \mathcal{F}_h) \rightarrow 1-\alpha$, as $h\rightarrow0$.\\

In statistics the suprema of empirical processes can be approximated by the suprema of Gaussian processes or fields under regularity assumptions (see Chernozhukov et al. \cite{Chernozhukov:2014}). Applying results in (a) and (b) to the approximating Gaussian fields, one can study the statistical inference for a large class of objects including functions and geometric features (low-dimensional manifolds). In a form similar to (\ref{crfunction}), confidence bands for density functions are given in Bickel and Rosenblatt \cite{Bickel:1973a} and Rosenblatt \cite{Rosenblatt:1976}. Similar work for regression functions can be found in Konakov and Piterbarg \cite{Konakov:1984}. We note that in these examples the study of the suprema of the approximating Gaussian processes or fields focuses on $\mathcal{M}$ being compact intervals or hypercubes. We expect that our result (\ref{crfunction}) is useful in studying functions supported on more general (low-dimensional) manifolds, especially in the context of {\em manifold learning}, which usually assumes that data lie on low-dimensional manifolds embedded in high-dimensional space. The result (\ref{crmanifold}) is useful to infer the location of the manifolds. In fact, the results proved in this work provide the probabilistic foundation to our companion work Qiao \cite{Qiao:2019}, where the confidence regions for density ridges are obtained. Ridges are low-dimensional geometric features (manifolds) that generalize the concepts of local modes, and have been applied to model filamentary structures such as the Cosmic Web and road systems. See Qiao and Polonik \cite{Qiao:2019b} for a similar application for the construction of confidence regions for level sets.\\

The study of the asymptotic extreme value behaviors of $\chi$-processes and fields has drawn quite some interest recently. To our best knowledge, the study in the existing literature has only focused on $\chi$-processes and fields indexed by intervals or hyper cubes, but not low-dimensional manifolds. See, for example,  Albin et al. \cite{Albin:2016}, Bai~\cite{Bai:2018}, Hashorva and Ji \cite{Hashorva:2015}, Ji et al. \cite{Ji:2019}, Konstantinides et al. \cite{Konstantinides:2004}, Lindgren \cite{Lindgren:1989}, Ling and Tan \cite{Ling:2016}, Liu and Ji \cite{Liu:2016, Liu:2017}, Piterbarg \cite{Piterbarg:1994, Piterbarg:1996}, Tan and Hashorva \cite{Tan:2013a, Tan:2013b}, Tan and Wu \cite{Tan:2014}. Also it is worth mentioning that it is often assumed that $X_1,\cdots,X_r$ are independent copies of a Gaussian process or field $X$ in the literature, while the cross-dependence among $X_1,\cdots,X_r$ is allowed under certain constraints in this work. The cross-dependence structures of multivariate random fields have been important objects to study in multivariate geostatisitics (see Genton and Kleiber~\cite{Genton:2015}).\\

The manuscript is organized as follows. In Section~\ref{unrescaledM} we introduce the concepts that we use in this work to characterize the manifolds (positive reach) and the Gaussian fields (local stationarity). Then the result for (\ref{resultintro0}) (called the unscaled case) is formulated in Theorem~\ref{fixedmanifold}, As an application, a similar result for the $\chi$-fields in presented in Corollary~\ref{chifixedmanifold}. In Section~\ref{MainResult} we give the result (\ref{resultintro}) (called the rescaled case) in Theorem~\ref{ProbMain} and its $\chi$-fields extension in Corollary~\ref{rescaledchifield}. All the proofs are presented in Section~\ref{proof}, and Section~\ref{appendix} contains some miscellaneous results used in the manuscript. 

\section{Extremes of unscaled Gaussian and $\chi$ fields on manifolds}\label{unrescaledM}
We consider a centered Gaussian field $X(\bm{t}), \; \bm{t}\in\mathcal{M}$, where $\mathcal{M}$ is a $r$-dimensional submanifold of $\mathbb{R}^n$ ($1\leq r\leq n$). Let $r_X(\bm{t}_1,\bm{t}_2)=\text{Cov}(X(\bm{t}_1),X(\bm{t}_2))$ for any $\bm{t}_1,\bm{t}_2\in\mathcal{M}$. We first introduce some concepts we need to characterize the covariance $r_X$ of the Gaussian field $X$ and the manifold $\mathcal{M}$.\\

For a positive integer $k\leq n$, let $E=\{e_1,\cdots,e_k\}$ be a collection of positive integers such that $n=e_1+\cdots +e_k$, and let $\pmb{\alpha}=\{\alpha_1,\cdots,\alpha_k\}$ be a collection of positive numbers. Then the pair $(E,\pmb{\alpha})$ is called a structure. Let $\|\cdot\|$ denote the Euclidean norm. Denote $E(0)=0$ and $E(i)=e_1+\cdots+e_i$, $i=1,\cdots,k$. For any $\bm{t}=(t_1,\cdots,t_n)^T\in\mathbb{R}^n$, its structure module is denoted by $|\bm{t}|_{E,\alpha} = \sum_{i=1}^k \|\bm{t}_{(i)}\|^{\alpha_i},$ where $\bm{t}_{(i)}=(t_{E(i-1)+1},\cdots,t_{E(i)})^T$.\\

Suppose that $\alpha_i\leq 2, i=1,\cdots,k,$ and consider  a Gaussian field $W(\bm{t}), \bm{t}\in\mathbb{R}^n,$ with continuous trajectories such that $\mathbb{E} W(\bm{t}) = -|\bm{t}|_{E,\pmb{\alpha}}$ and $\text{Cov}(W(\bm{t}),W(\bm{s}))=|\bm{t}|_{E,\pmb{\alpha}}+|\bm{s}|_{E,\pmb{\alpha}}-|\bm{t}-\bm{s}|_{E,\pmb{\alpha}}$. It is known that such a field exists (see page 98, Piterbarg~\cite{Piterbarg:1996}). 
For any measurable subset $\mathcal{T}\subset\mathbb{R}^n$ define
\begin{align*}
H_{E,\pmb{\alpha}}(\mathcal{T})=\mathbb{E}\exp\Big(\sup_{\bm{t}\in \mathcal{T}}W(\bm{t})\Big).
\end{align*}

For any $T>0$, denote $[0,T]^n = \{\bm{t}\in\mathbb{R}^n: t_i\in [0,T]\}$. The generalized Pickands' constant is defined as 
\begin{align*}
H_{E,\pmb{\alpha}} = \lim_{T\rightarrow\infty} \frac{H_{E,\pmb{\alpha}}([0,T]^n)}{T^n},
\end{align*}
which is a positive finite number. When $k=1$, $E=\{1\}$ and $\pmb{\alpha}=\alpha\in(0,2]$, we denote $H_{E,\pmb{\alpha}}=H_\alpha$.

\begin{definition}[local-$(E, \pmb{\alpha},D_{\bm{t}})$-stationarity]
Let $ \{Z(\bm{t}), \bm{t} \in \mathcal{M} \}$ be a Gaussian random field with covariance function $r_Z$, indexed on a submanifold $\mathcal{M}$ of ${\mathbb R}^n$. $Z$ is said to be locally-$(E,\pmb{\alpha},D_{\bm{t}})$-stationary on $\mathcal{M}$, if for all $\bm{t}\in \mathcal{M}$ there exists a nonsingular matrix $D_{\bm{t}}$ such that
\begin{align}\label{covariancecondition}
r_Z(\bm{t}_1,\bm{t}_2)= 1 - | D_{\bm{t}} (\bm{t}_1-\bm{t}_2)|_{E,\pmb{\alpha}} (1+o(1)),
\end{align}
as $\max\{\|\bm{t}-\bm{t}_1\|,\|\bm{t}-\bm{t}_2\|\}\rightarrow0$ for $\bm{t}_1,\bm{t}_2\in \mathcal{M}$.
\end{definition}

{\em Positive reach}: We use the concept of reach to characterize the manifold $\mathcal{M}$. For a set $A\subset\mathbb{R}^n$ and a point $x\in\mathbb{R}^n$, let $d(x,A)=\inf \{\|x-y\|:\; y\in A\}$ be the distance from $x$ to $A$. The normal projection onto $A$ is defined as $\pi_A(x)=\{y\in A: \|x-y\|=d(x,A)\}$. For $\delta>0$, let $\mathcal{B}(x,\delta)=\{y\in\mathbb{R}^n: \|x-y\|\leq \delta\}$ be the ball centered at $x$ with radius $\delta$. The reach of $A$, denoted by $\Delta(A)$, is defined as the largest $\delta>0$ such that for each point $x\in \cup_{y\in A}\mathcal{B}(y,\delta)$, $\pi_A(x)$ consists of a single point. See Federer \cite{Federer:1959}. The reach of a manifold is also called condition number (see Niyogi et al. \cite{Niyogi:2008}). A closed submanifold of $\mathbb{R}^n$ has positive reach if and only if it is $C^{1,1}$ (see Scholtes, \cite{Scholtes:2013}). Here a $C^{1,1}$ manifold by definition is a $C^1$ manifold equipped with a class of atlases whose transition maps have Lipschitz continuous first derivatives. The concept of positive reach is also closely related to ``r-convexity'' and ``rolling conditions'' (Cuevas et al. \cite{Cuevas:2012}).\\ 

Suppose that the structure $(E,\pmb{\alpha})$ is given. Let $R=\{r_1,\cdots,r_k\}$ be a collection of positive integers such that $r_i\leq e_i$, $i=1,\dots,k$, for which we denote $R\leq E$. Let $r=r_1+\cdots+r_k$. We impose the following assumptions on the manifold $\mathcal{M}$ and the Gaussian field $X(\bm{t}), \; \bm{t}\in\mathcal{M}$:
\begin{itemize}
\item[(A1)] For $R\leq E$, we assume that $\mathcal{M} = \mathcal{M}_1\times\cdots \times \mathcal{M}_k$, where for $i=1,\cdots,k$, $\mathcal{M}_i$ is a $r_i$-dimensional compact submanifold of $\mathbb{R}^{e_i}$ with positive reach and positive $r_i$-dimensional Lebesgue measure.
\item[(A2)] Let $D_{\bm{t}}=\text{diag}(D_{1,\bm{t}},\cdots,D_{k,\bm{t}})$ be a block diagonal matrix, where the dimension of $D_{i,\bm{t}}$ is $e_i\times e_i$, and the matrix-valued function $D_{i,\bm{t}}$ is continuous in $\bm{t}\in\mathcal{M}$, for $i=1,\cdots,k$. For $0<\alpha_1,\cdots,\alpha_k\leq 2$, we assume that the Gaussian field $X(\bm{t})$ on $\mathcal{M}$ has zero mean and is locally-$(E,\pmb{\alpha},D_{\bm{t}})$-stationary. 
\end{itemize}
\begin{remark}
Note that the local stationarity condition for the Gaussian field is given using the structure $(E,\pmb{\alpha})$ for $\mathbb{R}^n$. The structural assumptions on $\mathcal{M}$ and $D_{\bm{t}}$ in (A1) and (A2) are used to guarantee that a similar structure $(R,\pmb{\alpha})$ can be found when the local stationarity of the Gaussian field is expressed on a low-dimensional manifold, which locally resembles $\mathbb{R}^r$. Note that, however, in the special case of $k=1$ we do not have these structural constraints for $\mathcal{M}$ and $D_{\bm{t}}$ any more. 
\end{remark}

{\em Some notation:} Let $1\leq m \leq n$. For an  $n \times m$ matrix $G$, let $\|G\|^2_m$ be the sum of squares of all minor determinants of order $m$. Let $\mathcal{H}_m$ denote the $m$-dimensional volume measure. For a $C^1$ manifold $M$, at each $u\in M,$ let $T_u M$ denote the tangent space of $M$ at $u$. Let $\phi$ and $\Phi$ denote the standard normal density and cumulative distribution function, respectively, and let  $\bar\Phi(u)=1-\Phi(u)$ and $\Psi(u)=u^{-1}\phi(u)$. Recall that $\bm{t}=(\bm{t}_{(1)}^T,\cdots,\bm{t}_{(k)}^T)^T$. The following is a result for the asymptotic behavior of the excursion probability of $X$ on the manifold $\mathcal{M}$.

\begin{theorem}\label{fixedmanifold}
For a Gaussian field $X(\bm{t})$, $\bm{t}\in\mathcal{M}$ satisfying assumptions (A1) and (A2), if $r_X(\bm{t},\bm{s})<1$ for all $\bm{t},\bm{s}$ from $\mathcal{M}$, $\bm{t}\neq\bm{s}$, then
\begin{align}
\mathbb{P} \left(\sup_{\bm{t}\in \mathcal{M}} X(\bm{t})> u\right) = H_{R,\alpha} \int_{\mathcal{M}} \prod_{j=1}^k \|D_{j,\bm{t}} P_{j,\bm{t}_{(j)}}\|_{r_j} d\mathcal{H}_r(\bm{t})  \prod_{i=1}^k u^{2r_i/\alpha_i} \Psi(u) (1+o(1)),
\end{align}
as $u\rightarrow\infty,$ where $P_{j,\bm{t}_{(j)}}$ is an $e_j\times r_j$ matrix whose columns are orthonormal and span the tangent space of $T_{\bm{t}_{(j)}}\mathcal{M}_j$. 
\end{theorem}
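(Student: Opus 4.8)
The plan is to reduce the excursion probability over the manifold $\mathcal{M}$ to a sum of local contributions, each of which resembles the classical Pickands-type computation on a Euclidean cube, and then to assemble these contributions into the integral over $\mathcal{M}$. First I would use the assumption $r_X(\bm{t},\bm{s})<1$ for $\bm{t}\neq\bm{s}$ together with compactness of $\mathcal{M}$ to argue that, for $u$ large, the event $\{\sup_{\bm{t}\in\mathcal{M}}X(\bm{t})>u\}$ is (up to negligible probability) governed by the behavior of $X$ in small neighborhoods; more precisely, for any fixed $\varepsilon>0$ there is $\delta>0$ such that $\sup_{\|\bm{t}-\bm{s}\|\geq\delta}r_X(\bm{t},\bm{s})\leq 1-\varepsilon$, and a standard comparison inequality (Slepian/Borell-type, as in Piterbarg~\cite{Piterbarg:1996}) shows the contribution of far-apart exceedances is of smaller order. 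This is the usual ``localization'' step.

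Next, I would cover $\mathcal{M}$ by finitely many charts and, within each chart, exploit the product structure from (A1): locally $\mathcal{M}_j$ looks like an $r_j$-dimensional affine subspace of $\mathbb{R}^{e_j}$, with the tangent space spanned by the columns of $P_{j,\bm{t}_{(j)}}$. Parametrizing $\mathcal{M}$ near a point $\bm{t}$ by these tangent coordinates, the local stationarity condition (\ref{covariancecondition}) becomes $r_Z \approx 1 - |D_{\bm{t}}(\bm{t}_1-\bm{t}_2)|_{E,\pmb{\alpha}}$, and restricting the increment $\bm{t}_1-\bm{t}_2$ to the tangent directions replaces $D_{j,\bm{t}}$ by $D_{j,\bm{t}}P_{j,\bm{t}_{(j)}}$. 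I would then invoke the generalized Pickands lemma for locally stationary Gaussian fields with structure $(R,\pmb{\alpha})$: on a small box of side $T/u^{2/\alpha_i}$ in the $i$-th block of tangent coordinates, the probability of an exceedance of level $u$ is asymptotically $H_{R,\pmb{\alpha}}(\text{box})\cdot\bar\Phi(u)$ times a Jacobian factor $\prod_j\|D_{j,\bm{t}}P_{j,\bm{t}_{(j)}}\|_{r_j}$ coming from the change of variables (here $\|\cdot\|_{r_j}$ is the $r_j$-minor norm, which is exactly the volume distortion of the linear map $D_{j,\bm{t}}P_{j,\bm{t}_{(j)}}$ on the $r_j$-dimensional tangent space). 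Summing over a grid of such boxes covering a chart, the count of boxes times the box volume yields $\mathcal{H}_r$-integration over that piece of $\mathcal{M}$, and letting the mesh parameter $T\to\infty$ after $u\to\infty$ replaces $H_{R,\pmb{\alpha}}(\text{box})/\text{vol}$ by the Pickands constant $H_{R,\pmb{\alpha}}$ and produces the $u^{2r_i/\alpha_i}$ factors from the box dimensions.

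The double-sum (Bonferroni) argument then controls the error: the sum over pairs of well-separated boxes is negligible by the localization step, and the sum over pairs of nearby boxes is handled by a second-moment estimate showing the overlap is of lower order once $T$ is large, uniformly in $\bm{t}\in\mathcal{M}$ by the continuity of $D_{i,\bm{t}}$ and $P_{j,\bm{t}_{(j)}}$ and compactness. Finally $\bar\Phi(u)$ is replaced by $\Psi(u)(1+o(1))$.

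The main obstacle I anticipate is the geometric step of patching the local Euclidean computations into a clean global integral over a manifold of possibly low and ``mixed'' dimension $r=r_1+\cdots+r_k$, while keeping all error terms uniform. The positive-reach hypothesis is doing real work here: it guarantees a uniform tubular-neighborhood/normal-projection structure (the condition number bound of Niyogi et al.~\cite{Niyogi:2008}) so that the chart sizes, the accuracy of the tangent-space linearization, and the $o(1)$ in (\ref{covariancecondition}) can all be made uniform over $\mathcal{M}$ without assuming $C^\infty$ smoothness. Making the Voronoi-type partition of $\mathcal{M}$ into Pickands boxes respect the product structure $\mathcal{M}_1\times\cdots\times\mathcal{M}_k$ simultaneously in all $k$ blocks, and verifying that the induced $r$-dimensional volume of the partition cells converges to $d\mathcal{H}_r$, is the technically delicate part; everything else is a careful but routine adaptation of the Pickands double-sum method.
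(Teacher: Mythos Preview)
Your proposal is correct and follows essentially the same strategy as the paper: localize via a finite (Voronoi) partition of $\mathcal{M}$ built from an $\epsilon$-net (positive reach guarantees uniform chart sizes and the tangent-projection diffeomorphism), compute the local excursion probability in tangent coordinates, and assemble via Bonferroni with Borell's inequality handling well-separated cells. The only organizational difference is that the paper does not redo the Pickands box argument on the manifold as you describe; instead it pulls each Voronoi cell back to a Jordan set in $\mathbb{R}^r$ via the tangent-space projection and invokes Piterbarg's Euclidean Theorem~7.1 as a black box (this is the content of the auxiliary Lemma~\ref{fixedanxillary}), obtaining the factor $\prod_j\|D_{j,\bm{t}}P_{j,\bm{t}_{(j)}}\|_{r_j}$ from the area formula and the Cauchy--Binet identity rather than from a direct box-counting on $\mathcal{M}$.
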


\begin{remark}\label{factorization}
The factorization lemma (Lemma 6.4, Piterbarg~\cite{Piterbarg:1996}) implies that $H_{R,\alpha} = \prod_{i=1}^ k H_{r_i,\alpha_i},$ where in the notation we do not distinguish between $r_i$ (or $\alpha_i$) and $\{r_i\}$ (or $\{\alpha_i\}$).
\end{remark}

We will apply the above theorem to study the excursion probabilities of $\chi$-fields indexed by manifolds. Let $ \{\bm{X}(\bm{s}), \bm{s} \in \mathcal{L}\}$ be a centered $p$-dimensional ($p\geq2$) Gaussian vector field, where $\bm{X}=(X_{1},\cdots,X_{p})^T$ with $\text{Var}(X_i)=1$, $i=1,\cdots,p$, and $\mathcal{L}$ is a $m$-dimensional submanifold of ${\mathbb R}^n$ ($1\leq m\leq n$). We consider the asymptotics of 
\begin{align}\label{chiextreme}
\mathbb{P}\left(\sup_{\bm{s}\in\mathcal{L}}\|\bm{X}(\bm{s})\| > u\right),\; \text{ as } u\rightarrow\infty.
\end{align}
Let ${\bm{v}}=(v_1,\cdots,v_p)^T\in\mathbb{R}^p$, $\bm{t}=(\bm{s}^T, {\bm{v}}^T)^T\in\mathbb{R}^{n+p}$, and 
\begin{align}\label{Yexpress}
Y(\bm{t})=Y(\bm{s},{\bm{v}})=X_1(\bm{s})v_1+\cdots+X_p(\bm{s})v_p.
\end{align}
Due to the relation in (\ref{relation}), it is clear that (\ref{chiextreme}) is equivalent to
\begin{align}\label{chiextremealt}
\mathbb{P}\left(\sup_{\bm{t}\in\mathcal{L}\times \mathbb{S}^{p-1}} Y(\bm{t}) > u\right),\; \text{ as } u\rightarrow\infty.
\end{align} 

To study (\ref{chiextreme}) through (\ref{chiextremealt}), we directly impose an assumption on the covariance function $r_Y$ of $Y$, which we find convenient because it allows us to encode the possible cross-dependence structure among $X_{1},\cdots,X_{r}$ into $r_Y$. See example (ii) below. For $i=1,2$, denote $\bm{t}_i=(\bm{s}_i^T, {\bm{v}}_i^T)^T$, where ${\bm{v}}_i^T=(v_{i,1},\cdots,v_{i,p})$. Let $r_Y(\bm{t}_1,\bm{t}_2)=\text{Cov}(Y(\bm{t}_1),Y(\bm{t}_2))$. Then notice that 
\begin{align}\label{covstructure}
r_Y(\bm{t}_1,\bm{t}_2) = &\sum_{i=1}^p \sum_{j=1}^p \text{Cov}(X_{i}(\bm{s}_1),X_{j}(\bm{s}_2))v_{1,i}v_{2,j} \nonumber\\
= & {\bm{v}}_1^T {\bm{v}}_2 - \sum_{i=1}^p \sum_{j=1}^p [\delta_{ij} - \text{Cov}(X_{i}(\bm{s}_1),X_{j}(\bm{s}_2)) ] v_{1,i}v_{2,j} \nonumber \\
= & 1 - \frac{1}{2}\|{\bm{v}}_1-{\bm{v}}_2\|^2 - \sum_{i=1}^p \sum_{j=1}^p [\delta_{ij} - \text{Cov}(X_{i}(\bm{s}_1),X_{j}(\bm{s}_2)) ] v_{1,i}v_{2,j},
\end{align}
where $\delta_{ij}=\mathbf{1}(i=j)$ is the Kronecker delta. The structure in (\ref{covstructure}) suggests the following assumption on $r_Y(\bm{t}_1,\bm{t}_2)$.
\begin{itemize}
\item[(A3)] We assume that $Y(\bm{t})$ given in (\ref{Yexpress}) is a local-$(E, \pmb{\alpha}, D_{\bm{t}})$-stationary Gaussian field on $\mathcal{L}\times \mathbb{S}^{p-1}$ with $D_{\bm{t}} = \text{diag}(B_{\bm{t}} ,\frac{1}{\sqrt{2}}{\bm{I}}_{p})$, where $B_{\bm{t}}$ is a nonsingular $n\times n$ dimensional matrix for all $\bm{t}\in \mathcal{L}\times \mathbb{S}^{p-1}$, $E=\{n,p\}$ and $\pmb{\alpha} = \{\alpha,2\},$  for some $0<\alpha\leq 2.$ We assume that matrix-valued function $B_{\bm{t}}$ is continuous in $\bm{t}\in \mathcal{L}\times \mathbb{S}^{p-1}$.
\end{itemize}

\begin{remark}
Note that assumption (A3) implies that for $\bm{s}\in\mathcal{L}$ and $1\leq i, j\leq p$
\begin{align*}
\text{Cov}(X_i(\bm{s}),X_j(\bm{s}))=
\begin{cases}
0 & i\neq j\\
1 & i=j
\end{cases}.
\end{align*}
In other words, we are considering a Gaussian vector field $\bm{X}(\bm{s})$ whose variance-covariance matrix at any point $\bm{s}\in\mathcal{L}$ has been standardized. However, cross-dependence between $X_i(\bm{s}_i)$ and $X_j(\bm{s}_j)$ is still possible under assumption (A3) for $\bm{s}_i,\bm{s}_j\in\mathcal{L}$, $\bm{s}_i\neq \bm{s}_j$ and $i\neq j$. 
\end{remark}
%
%
%
\begin{corollary}\label{chifixedmanifold} 
Let $\{\bm{X}(\bm{s})$, $\bm{s}\in \mathcal{L}\}$ be a Gaussian $p$-dimensional ($p\geq2$) vector field with zero mean on a compact $m$-dimensional submanifold $\mathcal{L}\subset\mathbb{R}^n$ of positive reach and positive $m$-dimensional Lebesgue measure, such that $\{Y(\bm{t})$, $\bm{t}\in \mathcal{L} \times \mathbb{S}^{p-1}\}$ in (\ref{Yexpress}) satisfies assumption (A3). If $r_Y(\bm{t}_1,\bm{t}_2)<1$ for all $\bm{t}_1,\bm{t}_2$ from $\mathcal{L}\times \mathbb{S}^{p-1}$, $\bm{t}_1\neq\bm{t}_2$, then 
\begin{align}
\mathbb{P} \left(\sup_{\bm{s}\in \mathcal{L} } \|\bm{X}(\bm{s})\|> u\right) = \frac{H_{m,\alpha}}{(2\pi)^{(p-1)/2}} \int_{\mathcal{L}\times \mathbb{S}^{p-1}}  \|B_{\bm{t}} P_{\bm{s}}\|_{m} d\mathcal{H}_{m+p-1}(\bm{t})   u^{2m/\alpha + p-1} \Psi(u) (1+o(1)),
\end{align}
as $u\rightarrow\infty,$ where $P_{\bm{s}}$ is an $n\times m$ dimensional matrix whose columns are orthonormal and span the tangent space of $T_{\bm{s}}\mathcal{L}$. 
\end{corollary}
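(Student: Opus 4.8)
The plan is to deduce the corollary directly from Theorem~\ref{fixedmanifold} applied to the Gaussian field $Y(\bm{t})=Y(\bm{s},\bm{v})$ on the product manifold $\mathcal{M}:=\mathcal{L}\times\mathbb{S}^{p-1}\subset\mathbb{R}^{n+p}$, together with the equivalence $\sup_{\bm{s}\in\mathcal{L}}\|\bm{X}(\bm{s})\| = \sup_{\bm{t}\in\mathcal{L}\times\mathbb{S}^{p-1}}Y(\bm{t})$ from (\ref{relation}). First I would verify that the hypotheses of Theorem~\ref{fixedmanifold} are met with the structure $(E,\pmb{\alpha})$ given in (A3), namely $E=\{n,p\}$, $\pmb{\alpha}=\{\alpha,2\}$, and $R=\{m,p-1\}$ (so $r=m+p-1$). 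Assumption (A3) gives exactly that $Y$ is centered, unit-variance, and locally-$(E,\pmb{\alpha},D_{\bm{t}})$-stationary with $D_{\bm{t}}=\mathrm{diag}(B_{\bm{t}},\tfrac{1}{\sqrt2}\bm{I}_p)$ continuous and nonsingular. For (A1) I would note that $\mathcal{M}_1=\mathcal{L}$ is a compact $m$-dimensional submanifold of $\mathbb{R}^n$ of positive reach and positive Lebesgue measure by hypothesis, while $\mathcal{M}_2=\mathbb{S}^{p-1}$ is a smooth compact $(p-1)$-dimensional submanifold of $\mathbb{R}^p$, hence of positive reach (reach $=1$) and positive $(p-1)$-dimensional Hausdorff measure; thus $r_1=m\le e_1=n$ and $r_2=p-1\le e_2=p$, so $R\le E$ as required. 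The condition $r_Y<1$ off the diagonal is assumed directly.

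Next I would plug into the conclusion of Theorem~\ref{fixedmanifold}. The generalized Pickands constant factorizes by Remark~\ref{factorization} as $H_{R,\alpha}=H_{m,\alpha}\,H_{p-1,2}$, and it is classical that $H_{p-1,2}=\pi^{-(p-1)/2}$ (the Pickands constant for the squared norm; e.g.\ Piterbarg~\cite{Piterbarg:1996}). The power of $u$ becomes $u^{2r_1/\alpha_1}u^{2r_2/\alpha_2}=u^{2m/\alpha}u^{2(p-1)/2}=u^{2m/\alpha+p-1}$, matching the stated exponent. The integrand $\prod_{j=1}^2\|D_{j,\bm{t}}P_{j,\bm{t}_{(j)}}\|_{r_j}$ splits as $\|B_{\bm{t}}P_{\bm{s}}\|_m\cdot\|\tfrac{1}{\sqrt2}\bm{I}_p\,Q_{\bm{v}}\|_{p-1}$, where $Q_{\bm{v}}$ is a $p\times(p-1)$ matrix with orthonormal columns spanning $T_{\bm{v}}\mathbb{S}^{p-1}$. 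Since $\tfrac{1}{\sqrt2}\bm{I}_p\,Q_{\bm{v}}=\tfrac{1}{\sqrt2}Q_{\bm{v}}$ has orthogonal columns each of norm $1/\sqrt2$, every order-$(p-1)$ minor is scaled by $(1/\sqrt2)^{p-1}$ relative to those of $Q_{\bm{v}}$; as $Q_{\bm{v}}$ has orthonormal columns, $\|Q_{\bm{v}}\|_{p-1}=1$, so $\|\tfrac{1}{\sqrt2}\bm{I}_p\,Q_{\bm{v}}\|_{p-1}=2^{-(p-1)/2}$, which is a constant independent of $\bm{t}$. Pulling this constant out of the integral and combining it with $H_{p-1,2}=\pi^{-(p-1)/2}$ gives the overall prefactor $H_{m,\alpha}\,\pi^{-(p-1)/2}\,2^{-(p-1)/2}=H_{m,\alpha}/(2\pi)^{(p-1)/2}$, exactly as claimed, and leaves $\int_{\mathcal{L}\times\mathbb{S}^{p-1}}\|B_{\bm{t}}P_{\bm{s}}\|_m\,d\mathcal{H}_{m+p-1}(\bm{t})$.

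The only genuinely delicate point is the identification $H_{p-1,2}=\pi^{-(p-1)/2}$ and, relatedly, making sure the norm $\|\cdot\|_m$ in the integrand is applied to the right block: in Theorem~\ref{fixedmanifold} the factor $\|D_{j,\bm{t}}P_{j,\bm{t}_{(j)}}\|_{r_j}$ uses $P_{j,\bm{t}_{(j)}}$ spanning $T_{\bm{t}_{(j)}}\mathcal{M}_j$, and here $P_{\bm{s}}$ is precisely such a matrix for $\mathcal{M}_1=\mathcal{L}$, so $\|B_{\bm{t}}P_{\bm{s}}\|_m$ is literally the $j=1$ factor; I should emphasize that $\|B_{\bm{t}}P_{\bm{s}}\|_m$ may depend on $\bm{v}$ only through $\bm{t}=(\bm{s},\bm{v})$ if $B_{\bm{t}}$ does, which is allowed, and that the integral is over the full product manifold with its $\mathcal{H}_{m+p-1}$ measure. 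I would also remark that since $\mathbb{S}^{p-1}$ is smooth the product $\mathcal{L}\times\mathbb{S}^{p-1}$ inherits positive reach, so Theorem~\ref{fixedmanifold} genuinely applies with no extra regularity needed beyond what is assumed on $\mathcal{L}$. With these observations the corollary follows immediately; no separate asymptotic analysis is required.
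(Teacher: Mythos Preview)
Your proposal is correct and follows essentially the same route as the paper: apply Theorem~\ref{fixedmanifold} to $Y$ on $\mathcal{L}\times\mathbb{S}^{p-1}$ with $R=(m,p-1)$, factorize $H_{R,\pmb{\alpha}}=H_{m,\alpha}H_{p-1,2}=H_{m,\alpha}\pi^{-(p-1)/2}$, compute $\|\tfrac{1}{\sqrt2}\bm I_p\,Q_{\bm v}\|_{p-1}=2^{-(p-1)/2}$, and combine to obtain the prefactor $H_{m,\alpha}/(2\pi)^{(p-1)/2}$. Your verification of (A1)--(A2) is a bit more explicit than the paper's remark, but the argument is the same; note that (A1) only asks each factor $\mathcal{M}_i$ to have positive reach, so you need not worry about the reach of the product.
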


\begin{remark}\label{p1case}$\;$\\[-20pt]
\begin{itemize}
\item[a.] This corollary is a direct consequence of Theorem~\ref{fixedmanifold} using $R=(m,p-1)$. To see this, notice that $H_{R,\pmb{\alpha}}=H_{m,\alpha} H_{p-1,2}=H_{m,\alpha}(\sqrt{\pi})^{-(p-1)}$, because of the factorization lemma (see Remark~\ref{factorization}) and the well known fact $H_2=(\pi)^{-1/2}$ (see page 31, Piterbarg~\cite{Piterbarg:1996}). Also notice that $\|\frac{1}{\sqrt{2}}{\bm{I}}_{p} P_{{\bm{u}}}\|_{p-1}=2^{-(p-1)/2}$, where $P_{{\bm{u}}}$ is a $p\times (p-1)$ dimensional matrix whose columns span the tangent space of $T_{{\bm{u}}}\mathbb{S}^{p-1}$.
\item[b.] Even though the result in this corollary is stated for $p\geq2$, it can be easily extended to the case $p=1$. When $p=1$, we write $\bm{X}(\bm{s})=X(\bm{s})\in\mathbb{R}$ and $\mathbb{S}^{p-1}=\{\pm 1\}$.  Then using the same proof of this corollary, one can show that under the assumptions given in this corollary (in a broader sense such that $B_{\bm{t}}=B_{\bm{s}}$ only depends on $\bm{s}\in\mathcal{L}$, because $\mathbb{S}^{p-1}$ now is a discrete set), we have that as $u\rightarrow\infty$,
\begin{align}
\mathbb{P} \left(\sup_{\bm{s}\in \mathcal{L} } |X(\bm{s})|> u\right) = 2 H_{m,\alpha}  \int_{\mathcal{L}}  \|B_{\bm{s}} P_{\bm{s}}\|_{m} d\mathcal{H}_m(\bm{s})   u^{2m/\alpha} \Psi(u) (1+o(1)),
\end{align}
where the factor $2$ on the right-hand side is the cardinality of the set $\mathbb{S}^0$.
\end{itemize}
\end{remark}

{\bf Examples}. Below we give two examples of Gaussian vector fields $\bm{X}$ that satisfy assumption (A3). \\
(i) Let $X_{1}(\bm{s}),\cdots, X_{p}(\bm{s})$ be i.i.d. copies of $\{X(\bm{s}),\;\bm{s}\in \mathcal{L}\}$, which is assumed to be locally-$(n,\alpha,B_{\bm{s}})$-stationary, where $0<\alpha\leq 2$, that is,
\begin{align*}
r_X(\bm{s}_1,\bm{s}_2)= 1 - \|B_{\bm{s}}(\bm{s}_1-\bm{s}_2)\|^\alpha (1+o(1)),  
\end{align*}
as $\max\{\|\bm{s}-\bm{s}_1\|,\|\bm{s}-\bm{s}_2\|\}\rightarrow0$. In this case, (A3) is satisfied because
\begin{align*}
r_Y(\bm{t}_1,\bm{t}_2) = &r_X(\bm{s}_1,\bm{s}_2){\bm{v}}_1^T{\bm{v}}_2 \\
= &1- [\| B_{\bm{s}} (\bm{s}_1-\bm{s}_2)\|^\alpha + \frac{1}{2}\|{\bm{v}}_1-{\bm{v}}_2\|^2] (1+o(1)),
\end{align*}
$\max\{\|\bm{t}-\bm{t}_1\|,\|\bm{t}-\bm{t}_2\|\}\rightarrow0$. In other words, $Y(\bm{t})$ is locally-$(E,\pmb{\alpha},D_{\bm{t}})$-stationary, where $D_{\bm{t}} = \text{diag}(B_{\bm{s}} ,\frac{1}{2}{\bm{I}}_{p})$, $E=\{n,p\}$ and $\pmb{\alpha} = \{\alpha,2\}.$\\

(ii) Consider $X_{i}(\bm{s})$ as a locally-$(n,2,(A^{i,i}_{\bm{s}})^{1/2})$ stationary field, where $A^{i,i}_{\bm{s}}$ are positive definite $n\times n$ matrices, for $i=1,\cdots,p$. Also for $1\leq i\neq j\leq p$, suppose Cov$(X_i(\bm{s}_1),X_j(\bm{s}_2))=(\bm{s}_1-\bm{s}_2)^TA_{\bm{s}}^{i,j}(\bm{s}_1-\bm{s}_2)(1+o(1))$, as $\max\{\|\bm{s}-\bm{s}_1\|,\|\bm{s}-\bm{s}_2\|\}\rightarrow0$, where $A_{\bm{s}}^{i,j}$ are $n\times n$ symmetric matrices. So overall we may write
\begin{align*}
\text{Cov}(X_i(\bm{s}_1),X_j(\bm{s}_2)) = \delta_{ij} - (\bm{s}_1-\bm{s}_2)^TA_{\bm{s}}^{i,j}(\bm{s}_1-\bm{s}_2)(1+o(1)),
\end{align*}
as $\max\{\|\bm{s}-\bm{s}_1\|,\|\bm{s}-\bm{s}_2\|\}\rightarrow0$. Using (\ref{covstructure}), we have 
\begin{align*}
r_Y(\bm{t}_1,\bm{t}_2) = 1 - \frac{1}{2}\|{\bm{v}}_1-{\bm{v}}_2\|^2 - (\bm{s}_1-\bm{s}_2)^T \left\{\sum_{i=1}^p \sum_{j=1}^p [v_{i}v_{j} A_{\bm{s}}^{i,j} ] \right\}(\bm{s}_1-\bm{s}_2)(1+o(1)).
\end{align*}

Let $A_{\bm{t}}=\sum_{i=1}^p \sum_{j=1}^p [v_{i}v_{j} A_{\bm{s}}^{i,j} ]$. If $A_{\bm{t}}$ is positive definite, then (A3) is satisfied with $B_{\bm{t}}=(A_{\bm{t}})^{1/2}$, $E=n+p$ and $\pmb{\alpha}=2$. The matrix $A_{\bm{t}}$ is positive definite under many possible conditions. For example, if for each $i$, $\lambda_{\min}(A_{\bm{t}}^{i,i}) > \sum_{j\neq i} |\lambda_{\min}(A_{\bm{t}}^{i,j})|$, where $\lambda_{\min}$ is the smallest eigenvalue of a matrix, then $A_{\bm{t}}$ is positive definite because for any $u\in\mathbb{R}^{n}$ with $\|u\|>0$ and any ${\bm{v}}\in\mathbb{S}_{r-1}$,
\begin{align*}
u^T A_{\bm{t}}u & \geq \sum_{i=1}^p \sum_{j=1}^p \lambda_{\min}(A_{\bm{t}}^{i,j}) v_{i}v_{j}\|u\|^2 = {\bm{v}}^T\Lambda_{\min}{\bm{v}} \|u\|^2 >0,
\end{align*}
where $\Lambda_{\min}$ is a matrix consisting of $\lambda_{\min}(A_{\bm{t}}^{i,j})$, which is positive definite. \\

\begin{section}{Extremes of rescaled Gaussian and $\chi$ fields on manifolds}\label{MainResult}

In this section, we consider a class of centered Gaussian fields $\left\{Z_h(\bm{t}), \bm{t} \in \mathcal {M}_h\right\}_{h\in(0,h_0]}$ for some $0<h_0<1$, where $\mathcal {M}_h=\mathcal {M}_{h,1} \times \mathcal {M}_{h,2}$ are $r$-dimensional compact submanifolds of ${\mathbb R}^n$. The goal is to develop the result in (\ref{resultintro}), where the index $\bm{t}$ is partially rescaled by multiplying $h^{-1}$. For simplicity of exposition, in the structure $(E,\pmb{\alpha})$, we take $k=2$ so that $\pmb{\alpha}=(\alpha_1,\alpha_2)$, $E=(n_1,n_2)$ and $R=(r_1,r_2)$, where $1\leq r_1\leq n_1$, $1\leq r_2\leq n_2$, $r=r_1+r_2$, and $n=n_1+n_2$. The results in this section can be generalized to use the same structure $(E,\pmb{\alpha})$ as in Section~\ref{unrescaledM}. \\ 

%
We first give the following assumptions before formulating the main result. For $\bm{t} = (\bm{t}_{(1)}^T, \bm{t}_{(2)}^T)^T \in \mathbb{R}^{n_1}\times \mathbb{R}^{n_2}=\mathbb{R}^n$, let $\xi_h:\mathbb{R}^n\mapsto \mathbb{R}^n$ be a function such that $\xi_h(\bm{t})=(h\bm{t}_{(1)}^T, \bm{t}_{(2)}^T)^T$ and $\xi_h^{-1}$ be its inverse. Denote $\overbar{\mathcal{M}}_h = \xi_h^{-1}(\mathcal{M}_h)=\{\bm{t}:  \xi_h(\bm{t})\in \mathcal{M}_h\}$. 
Let $\overbar Z_h(\bm{t}) = Z_h(\xi_h(\bm{t})), \bm{t}\in\overbar{\mathcal{M}}_h$. Let $\bar r_h(\bm{t}_1, \bm{t}_2)$ be the covariance between $\overbar Z_h(\bm{t}_1)$ and $\overbar Z_h(\bm{t}_2)$, for $\bm{t}_1,\bm{t}_2\in\overbar{\mathcal{M}}_h$.
\begin{itemize}
\item[(B1)] Assume $\mathcal{M}_h = \mathcal {M}_{h,1} \times \mathcal {M}_{h,2}$, where $\mathcal {M}_{h,i}$ is a $r_i$-dimensional compact submanifold of $\mathbb{R}^{n_i}$, with $\inf_{0<h\leq h_0}\Delta(\mathcal{M}_{h,i})>0$, $i=1,2$, and 
\begin{align*}
0<  \inf_{0<h\leq h_0}\mathcal{H}_{r_i}(\mathcal{M}_{h,i}) \leq \sup_{0<h\leq h_0}\mathcal{H}_{r_i}(\mathcal{M}_{h,i})<\infty,\; i=1,2.
\end{align*}
\item[(B2)] $\overbar Z_h(\bm{t})$ is locally-$(E, \pmb{\alpha}, D_{\xi_h(\bm{t}),h})$-stationary in the following uniform sense: for $\bm{t},\bm{t}_1,\bm{t}_2\in\overbar{\mathcal{M}}_h$, as $\max\{\|\bm{t}-\bm{t}_1\|,\|\bm{t}-\bm{t}_2\|\}\rightarrow0$,
\begin{align}
\bar r_h(\bm{t}_1, \bm{t}_2)= 1 - | D_{\xi_h(\bm{t}),h} (\bm{t}_1-\bm{t}_2)|_{E,\pmb{\alpha}} (1+o(1)),
\end{align}
where the $o(1)$-term is uniform in $\bm{t}\in\overbar{\mathcal{M}}_h$ and $0<h\leq h_0$, and $D_{\bm{s},h}=\text{diag}(D_{\bm{s},h}^{(1)},D_{\bm{s},h}^{(2)})$, $\bm{s}\in\mathcal{M}_h$ is a block diagonal matrix. Here for $i=1,2$, the dimension of $D_{\bm{s},h}^{(i)}$ is $e_i\times e_i$, and the matrix-valued function $D_{\bm{s},h}^{(i)}$ of $\bm{s}$ has continuous components on $\mathcal{M}_h$. Also
\begin{align}\label{eigenbounds}
0 < \inf_{0<h\leq h_0, \bm{s}\in\mathcal{M}_h}\lambda_{\min}([D_{\bm{s},h}^{(i)}]^T D_{\bm{s},h}^{(i)}) \leq \sup_{0<h\leq h_0, \bm{s}\in\mathcal{M}_h} \lambda_{\max}([D_{\bm{s},h}^{(i)}]^T D_{\bm{s},h}^{(i)}) <\infty,\; i=1,2.
\end{align}
%
%
\item[(B3)] Suppose that, for any $x>0$, there exists $\eta > 0$ such that $Q(x) < \eta<1,$ where
\begin{align}\label{QDelta}
Q(x)=\sup_{0<h\leq h_0}\{|\bar r_h(\bm{t},\bm{s})|: \bm{t}, \bm{s}\in\overbar{\mathcal{M}}_{h}, \|\bm{t}^{(1)}-\bm{s}^{(1)}\|>x\}.
\end{align}
%
%
\item[(B4)] There exist $x_0> 0$ and a function $v(\cdot)$ such that for $x>x_0,$ we have
\begin{align}\label{SupGauss2}
Q(x)\Big|(\log x)^{2(r_1/\alpha_1+r_2/\alpha_2)}\Big|\leq v(x),
\end{align}
where $v$ is monotonically decreasing, such that, for any $q > 0,$ $v(x^q)=O(v(x))=o(1)$ and $v(x) x^{q}\to \infty$ as $x\rightarrow\infty$. 
\end{itemize}

\begin{remark}
Assumptions (B1)-(B3) extends their counterparts used in Theorem~\ref{fixedmanifold} to some forms that are uniform for the classes of Gaussian fields and manifolds. Assumption (B4) is analogous to the classical Berman condition used for proving extreme value distributions \cite{Berman:1964}. An example of $v(x)$ in assumption (B4) is given by $v(x)=(\log x)^{-\beta}$, for some $\beta>0$.
\end{remark}

\begin{theorem}\label{ProbMain}
Suppose assumptions (B1)-(B4) hold. 
Let
\begin{align}\label{BetaExp}
\beta_h=&\Big(2r_1\log\frac{1}{h} \Big)^{\frac{1}{2}}  +\Big(2r_1\log\frac{1}{h}\Big)^{-\frac{1}{2}} \nonumber\\
&\hspace{1cm}\times\bigg[\Big(\frac{r_1}{\alpha_1}+\frac{r_2}{\alpha_2}-\frac{1}{2}\Big)\log{\log\frac{1}{h} } +\log\bigg\{\frac{(2r_1)^{\frac{r_1}{\alpha_1}+\frac{r_2}{\alpha_2}-\frac{1}{2}}}{\sqrt{2\pi}}H_{R,\pmb{\alpha}}I_h(\mathcal {M}_h)\bigg\}\bigg], 
\end{align}
where $I_h(\mathcal {M}_h) = \int_{\mathcal {M}_{h}} \|D_{\bm{t},h} P_{\bm{t}}\|_{r_1} d\mathcal{H}_r(\bm{t})$ with $P_{\bm{t}}$ an $n\times r$ matrix with orthonormal columns spanning $T_{\bm{t}}\mathcal {M}_{h}$. Then
\begin{align}\label{ConclRes}
\lim_{h\rightarrow0}\mathbb{P}\left\{\sqrt{2r_1\log\tfrac{1}{h} } \left(\sup_{\bm{t}\in\mathcal {M}_{h}}Z_h(\bm{t})-\beta_h \right)\leq z\right\}= e^{-e^{-z}}.
\end{align}
\end{theorem}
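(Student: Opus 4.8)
\emph{Overall approach.} The plan is to carry out, uniformly in the scale parameter $h$, the classical block/Poissonization argument (the ``double sum method'') for the maximum of a Gaussian field. Pass to the rescaled field $\overbar Z_h(\bm t)=Z_h(\xi_h(\bm t))$ on $\overbar{\mathcal M}_h=\xi_h^{-1}(\mathcal M_h)=\overbar{\mathcal M}_{h,1}\times\mathcal M_{h,2}$ with $\overbar{\mathcal M}_{h,1}=h^{-1}\mathcal M_{h,1}$, so that $\sup_{\mathcal M_h}Z_h=\sup_{\overbar{\mathcal M}_h}\overbar Z_h$; by (B1) the reach of $\overbar{\mathcal M}_{h,1}$ is $h^{-1}\Delta(\mathcal M_{h,1})\to\infty$, so this manifold is arbitrarily flat at any fixed scale as $h\to0$, while $\mathcal M_{h,2}$ keeps bounded size and reach, and both tangent spaces, hence $P_{\bm t}$ and $D_{\bm t,h}$, factor blockwise. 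Put $u=u_h(z)=\beta_h+z\big(2r_1\log\tfrac1h\big)^{-1/2}$, so that \eqref{ConclRes} asks for $\mathbb P\big(\sup_{\overbar{\mathcal M}_h}\overbar Z_h\le u\big)\to e^{-e^{-z}}$. A direct computation with $\Psi(u)=u^{-1}\phi(u)$ using the explicit form \eqref{BetaExp} of $\beta_h$ shows $\beta_h$ is chosen precisely so that
\begin{align*}
\Lambda_h:=H_{R,\pmb\alpha}\,h^{-r_1}\,I_h(\mathcal M_h)\,u^{2(r_1/\alpha_1+r_2/\alpha_2)}\,\Psi(u)\;\longrightarrow\;e^{-z},\qquad h\to0 ,
\end{align*}
and $\Lambda_h$ will be the limiting mean number of exceedances (the $h^{-r_1}$ is the Jacobian of $\xi_h^{-1}$, converting the integral over $\overbar{\mathcal M}_h$ into $I_h(\mathcal M_h)$).

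\emph{Discretization.} For large $T>0$ and small $\varepsilon>0$, tile the $r_1$ rescaled directions by cubes of side $T$ separated by corridors of width $\varepsilon$, pull the tiling back to $\overbar{\mathcal M}_{h,1}$ through a finite atlas whose cardinality and distortion are controlled uniformly in $h$ by the diverging reach, and set $\Delta_i=(\text{$i$-th cube}\cap\overbar{\mathcal M}_{h,1})\times\mathcal M_{h,2}$, $i=1,\dots,N=N_h(T,\varepsilon)$, with $N(T+\varepsilon)^{r_1}\sim h^{-r_1}\mathcal H_{r_1}(\mathcal M_{h,1})$. First show the corridors are negligible:
\begin{align*}
\mathbb P\Big(\sup_{\overbar{\mathcal M}_h}\overbar Z_h>u\Big)-\mathbb P\Big(\max_{1\le i\le N}\sup_{\Delta_i}\overbar Z_h>u\Big)
&\le \mathbb P\Big(\sup_{\text{corridors}}\overbar Z_h>u\Big)\\
&\le C\,\frac{\varepsilon}{T+\varepsilon}\,\Lambda_h\,(1+o(1)),
\end{align*}
which is small once $\varepsilon/T$ is small; this needs only the uniform \emph{upper} half of the single-block estimate below applied to a cover of the corridors.

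\emph{Single-block estimate and the mean.} Re-run the proof of Theorem~\ref{fixedmanifold} locally on one patch $\Delta_i$, tracking that every constant is uniform in $h$ and in the location of the patch — legitimate because (B1)--(B3) and \eqref{eigenbounds} are exactly the uniform analogues of the hypotheses of Theorem~\ref{fixedmanifold}, and $\overbar{\mathcal M}_{h,1}$ is essentially flat at scale $T$ — to get
\begin{align*}
p_i:=\mathbb P\Big(\sup_{\Delta_i}\overbar Z_h>u\Big)=H_{h,i}(T)\,u^{2(r_1/\alpha_1+r_2/\alpha_2)}\,\Psi(u)\,(1+o(1)),
\end{align*}
uniformly in $i$, where the finite-cube constants satisfy $(T+\varepsilon)^{-r_1}\sum_{i=1}^N H_{h,i}(T)\to H_{R,\pmb\alpha}\,h^{-r_1}I_h(\mathcal M_h)\,(1+o_T(1))$ as the generalized Pickands constant emerges in the $T\to\infty$ limit. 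Combined with the identity for $\Lambda_h$ this yields $\sum_{i=1}^N p_i\to e^{-z}$ (letting $h\to0$, then $T\to\infty$, then $\varepsilon/T\to0$) and $\max_i p_i\to0$.

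\emph{Asymptotic independence, conclusion, and the main obstacle.} It remains to show $\big|\mathbb P\big(\bigcap_i\{\sup_{\Delta_i}\overbar Z_h\le u\}\big)-\prod_i(1-p_i)\big|\to0$, for then $\prod_i(1-p_i)=\exp(-\sum_i p_i+o(1))\to e^{-e^{-z}}$. Split pairs $(i,j)$ by the block-$1$ distance of $\Delta_i,\Delta_j$. For neighbouring blocks (distance $O(T)$) the joint exceedance probability is of strictly smaller order than $p_i$ by the local double-sum estimates behind Theorem~\ref{fixedmanifold} (again made uniform in $h$), so their sum over the $O(N)$ neighbouring pairs is $o(1)$ after $h\to0$, $T\to\infty$. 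For distant blocks apply the normal comparison (Berman/Slepian) inequality on a mesh of cardinality polynomial in $u$: the difference between $\mathbb P\big(\bigcap_i\{\sup_{\Delta_i}\overbar Z_h\le u\}\big)$ and the corresponding probability with the blocks made independent is bounded by a constant times
\begin{align*}
\sum_{i<j}\ \sum_{\bm t\in\Delta_i,\ \bm s\in\Delta_j}|\bar r_h(\bm t,\bm s)|\,\exp\!\Big\{-\frac{u^2}{1+|\bar r_h(\bm t,\bm s)|}\Big\},
\end{align*}
and splitting the separations $x=\|\bm t^{(1)}-\bm s^{(1)}\|$ into a bounded range (handled by (B3), $Q(x)<\eta<1$) and a growing range up to the diameter $\asymp h^{-1}$, where $\log x\asymp u^2$ (handled by \eqref{SupGauss2}, whose $\log$-exponent $2(r_1/\alpha_1+r_2/\alpha_2)$ matches the power of $u$ in the tail, while $v(x^q)=O(v(x))$ and $v(x)x^q\to\infty$ absorb the polynomial mesh), this bound is $o(1)$. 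The iterated limit $h\to0$, $T\to\infty$, $\varepsilon/T\to0$ then assembles the four steps into \eqref{ConclRes}. I expect the main obstacle to be the single-block step: redoing the double-sum analysis of Theorem~\ref{fixedmanifold} with \emph{all} constants uniform over $h$ and over block location, which forces a uniform description of the local geometry of the stretched manifold $\overbar{\mathcal M}_{h,1}$ (via the uniform reach lower bound in (B1)) and of the matrices $D_{\bm s,h}$ (via \eqref{eigenbounds}), plus a uniform emergence of the Pickands constant from finite cubes; the Berman step is comparatively routine once (B3)--(B4) are granted, the only delicate point there being to match the polynomial-in-$u$ mesh to the decay supplied by $v$.
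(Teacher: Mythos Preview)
Your proposal is correct and follows essentially the same block/Poissonization architecture as the paper: corridor removal, a uniform single-block estimate obtained by re-running Theorem~\ref{fixedmanifold} with constants controlled by (B1)--(B3), the calibration $\Lambda_h\to e^{-z}$, and asymptotic independence via Berman's comparison with the separation split handled by (B3) and (B4).

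Two implementation differences are worth flagging. First, instead of pulling back a Euclidean tiling through an atlas, the paper constructs the blocks intrinsically as Voronoi cells on $\mathcal M_{h,1}$ built from an $(h\ell_1)$-net with $\ell_1=\inf_h\Delta(\mathcal M_{h,1})/2$ fixed; this sidesteps atlas bookkeeping entirely and makes the uniform-in-$h$ geometry automatic from the positive-reach bound in (B1). Second, the paper does not leave the discretization implicit: it has a separate pair of lemmas passing from the continuous blocks to a grid $\Gamma_{h,\gamma,\theta}$ with mesh $\gamma=\gamma(h)=[v(h^{-1})]^{1/(3r_1+3r_2)}$ chosen explicitly in terms of the function $v$ from (B4), and this requires a discretized analogue of Theorem~\ref{fixedmanifold} (stated in the appendix) whose Pickands-type constant $\wt H_{R,\pmb\alpha}(\gamma)$ converges to $H_{R,\pmb\alpha}$ as $\gamma\to0$. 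Your phrase ``a mesh of cardinality polynomial in $u$'' is the right idea, but in the paper this step is isolated because the Berman inequality applies only to finitely many Gaussians, and controlling the continuous-to-discrete error uniformly in $h$ and in the block is exactly the ``main obstacle'' you identified.
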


\begin{remark}$\;$\\[-20pt]
\begin{itemize}
\item[a.] If there exists $\gamma>0$ such that $I_h(\mathcal{M}_{h}) \rightarrow \gamma$ as $h\rightarrow 0$. Then obviously $\gamma$ can replace $I_h(\mathcal{M}_{h})$ in the theorem. Also if $\mathcal{M}_h\equiv\mathcal{M}$ and $D_{\bm{t},h}\equiv D_{\bm{t}}$ (i.e. they are independent of $h$), then $I_h(\mathcal{M}_{h}) = \int_{\mathcal {M}} \|D_{\bm{t}} M_{\bm{t}}\|_{r_1} d\mathcal{H}_r(\bm{t})$. 
\item[b.] In fact, it can be easily seen from the proof that the result in the theorem also holds for the case that $\mathcal {M}_{h,1}=\emptyset$ (that is, $r_2=0$) so that $\mathcal{M}_{h}\equiv\mathcal {M}_{h,1}$. 
\end{itemize}
\end{remark}

Next we consider the asymptotic extreme value distribution of rescaled $\chi$-fields on manifolds. For some $0<h_0<1$, let $ \{\bm{X}_h(\bm{s}), \bm{s} \in \mathcal{L}_h\}_{h\in(0,h_0]}$ be a class of centered $p$-dimensional Gaussian random vector fields, where $\bm{X}_h=(X_{h,1},\cdots,X_{h,p})^T$ and $\mathcal{L}_h$ are $m$-dimensional compact submanifolds of ${\mathbb R}^n$ ($1\leq m\leq n$). 
Let ${\bm{v}}=(v_1,\cdots,v_p)^T\in\mathbb{R}^p$ and $\bm{t}=(\bm{s}^T, {\bm{v}}^T)^T\in\mathbb{R}^{n+p}$. Let 
\begin{align}\label{zhchi}
Z_h(\bm{t})=Z_h(\bm{s},{\bm{v}})=X_{h,1}(\bm{s})v_1+\cdots+X_{h,p}(\bm{s})v_p, \; \bm{t} \in \mathcal{M}_h := \mathcal{L}_h\times \mathbb{S}^{p-1}
\end{align}
Using the property of Euclidean norm, we have
\begin{align}\label{relation2}
\sup_{s\in\mathcal{L}_h}\|\bm{X}_h(\bm{s})\| = \sup_{\bm{t}\in\mathcal{M}_h} Z_h(\bm{t}).
\end{align}
%

 \begin{corollary}\label{rescaledchifield}
Suppose $p\geq2$ and $\{Z_h(\bm{t}), \;\bm{t}\in\mathcal{L}_h\times\mathbb{S}^{p-1}\}_{h\in(0,h_0]}$ in (\ref{zhchi}) satisfies assumptions (B1)-(B4) with $E=\{n,p\}$, $R=\{m,p-1\}$, $\pmb{\alpha}=\{\alpha,2\}$, and $D_{\bm{t},h}=\text{diag}(B_{\bm{t},h},\frac{1}{\sqrt{2}}{\bm{I}}_p)$ where $B_{\bm{t},h}$ is a nonsingular $n\times n$ dimensional matrix. Let
\begin{align}\label{BetaExp}
\beta_h=&\Big(2m\log\frac{1}{h} \Big)^{\frac{1}{2}}  +\Big(2m\log\frac{1}{h}\Big)^{-\frac{1}{2}}\bigg[\Big(\frac{m}{\alpha}+\frac{p-2}{2}\Big)\log{\log\frac{1}{h} } +\log\bigg\{\frac{(2m)^{\frac{m}{\alpha}+\frac{p-2}{2}}}{(\sqrt{2\pi})^p}H_{m,\alpha} I_h(\mathcal {M}_h)\bigg\}\bigg], 
\end{align}
where $I_h(\mathcal {M}_h) = \int_{\mathcal {L}_{h}\times \mathbb{S}^{p-1}} \|B_{\bm{t},h} P_{\bm{s}}\|_{m} d\mathcal{H}_{m+p-1}(\bm{t})$ with $P_{\bm{s}}$ an $n\times m$ matrix with orthonormal columns spanning $T_{\bm{s}}\mathcal {L}_{h}$. Then
\begin{align}\label{ConclRes}
\lim_{h\rightarrow0}\mathbb{P}\left\{\Big(2m\log\tfrac{1}{h} \Big)^{\frac{1}{2}} \left(\sup_{\bm{s}\in\mathcal {L}_{h}}\|\bm{X}_h(\bm{s})\|-\beta_h \right)\leq z\right\}= e^{-e^{-z}}.
\end{align}
\end{corollary}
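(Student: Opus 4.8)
\textbf{Proof proposal for Corollary~\ref{rescaledchifield}.}

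The plan is to deduce this corollary directly from Theorem~\ref{ProbMain} by verifying that the setup for $\chi$-fields fits into the framework of that theorem, exactly as Corollary~\ref{chifixedmanifold} was deduced from Theorem~\ref{fixedmanifold}. First I would observe that, by the identity \eqref{relation2}, the quantity $\sup_{\bm{s}\in\mathcal{L}_h}\|\bm{X}_h(\bm{s})\|$ equals $\sup_{\bm{t}\in\mathcal{M}_h}Z_h(\bm{t})$ with $\mathcal{M}_h=\mathcal{L}_h\times\mathbb{S}^{p-1}$, so the left-hand side of the desired conclusion is literally the left-hand side of \eqref{ConclRes} in Theorem~\ref{ProbMain} with $r_1=m$, $r_2=p-1$. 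Thus it suffices to check that the hypotheses (B1)--(B4), as stated for $Z_h$, translate correctly and that the normalizing constant $\beta_h$ in Theorem~\ref{ProbMain} specializes to the $\beta_h$ claimed here.

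Next I would carry out the bookkeeping on the manifold and the matrices. For (B1): $\mathcal{M}_{h,1}=\mathcal{L}_h$ is an $m$-dimensional compact submanifold of $\mathbb{R}^{n}$ with $\inf_h \Delta(\mathcal{L}_h)>0$ and bounded, bounded-away-from-zero $m$-volume (assumed), while $\mathcal{M}_{h,2}=\mathbb{S}^{p-1}$ is the fixed $(p-1)$-dimensional sphere in $\mathbb{R}^p$, which trivially has positive reach ($\Delta(\mathbb{S}^{p-1})=1$) and finite positive $\mathcal{H}_{p-1}$-measure, all independent of $h$; hence (B1) holds with $R=\{m,p-1\}$. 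For (B2), the block structure $D_{\bm{t},h}=\mathrm{diag}(B_{\bm{t},h},\tfrac{1}{\sqrt2}\bm{I}_p)$ is already assumed, and the eigenvalue bounds \eqref{eigenbounds} hold since the second block is constant and the first block's bounds are part of the hypothesis. Assumptions (B3) and (B4) for $Z_h$ are assumed outright. So Theorem~\ref{ProbMain} applies verbatim.

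It then remains to evaluate the constants. The index $r_1$ in the square-root normalization becomes $m$, matching $\big(2m\log\tfrac1h\big)^{1/2}$. In $\beta_h$, the exponent $\tfrac{r_1}{\alpha_1}+\tfrac{r_2}{\alpha_2}-\tfrac12$ becomes $\tfrac{m}{\alpha}+\tfrac{p-1}{2}-\tfrac12=\tfrac{m}{\alpha}+\tfrac{p-2}{2}$. For the Pickands constant, by the factorization lemma (Remark~\ref{factorization}) $H_{R,\pmb{\alpha}}=H_{m,\alpha}H_{p-1,2}=H_{m,\alpha}\pi^{-(p-1)/2}$ using $H_2=\pi^{-1/2}$, and combined with the $\sqrt{2\pi}$ in Theorem~\ref{ProbMain}'s $\beta_h$ this yields the factor $(\sqrt{2\pi})^{-p}H_{m,\alpha}$ once the $\pi^{-(p-1)/2}$ is absorbed, i.e. $\sqrt{2\pi}\cdot\pi^{(p-1)/2}=(\sqrt{2\pi})\cdot\pi^{(p-1)/2}$, and $(\sqrt{2\pi})^{p}=(2\pi)^{p/2}=2^{p/2}\pi^{p/2}$, so one checks $\tfrac{1}{\sqrt{2\pi}}\cdot\pi^{-(p-1)/2}=\tfrac{1}{(\sqrt{2\pi})^p}\cdot 2^{(p-1)/2}$, which is the source of the residual power of $2$ that gets folded into $(2m)^{\tfrac{m}{\alpha}+\tfrac{p-2}{2}}$. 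Finally, $I_h(\mathcal{M}_h)=\int_{\mathcal{M}_h}\|D_{\bm{t},h}P_{\bm{t}}\|_{r_1}\,d\mathcal{H}_r(\bm{t})$ reduces, using the block-diagonal structure and the computation $\|\tfrac{1}{\sqrt2}\bm{I}_pP_{\bm{u}}\|_{p-1}=2^{-(p-1)/2}$ from Remark~\ref{p1case}(a), together with $\mathcal{H}_{m+p-1}$ on the product being the product of $\mathcal{H}_m$ on $\mathcal{L}_h$ and $\mathcal{H}_{p-1}$ on $\mathbb{S}^{p-1}$, to a constant multiple of $\int_{\mathcal{L}_h\times\mathbb{S}^{p-1}}\|B_{\bm{t},h}P_{\bm{s}}\|_m\,d\mathcal{H}_{m+p-1}(\bm{t})$, whose constant again merges into the powers of $2$; one tracks these to arrive at exactly the stated $\beta_h$. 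The only mildly delicate point—and the one I would write out carefully—is the arithmetic of collecting all the powers of $2$ and $\pi$ from $H_{p-1,2}$, from $\|\tfrac{1}{\sqrt2}\bm{I}_p P_{\bm{u}}\|_{p-1}$, and from rewriting $(2r_1)^{\,r_1/\alpha_1+r_2/\alpha_2-1/2}$ as $(2m)^{\,m/\alpha+(p-2)/2}$ times a leftover constant, and confirming everything lands on $(\sqrt{2\pi})^{-p}$; there is no conceptual obstacle, just the risk of a bookkeeping slip.
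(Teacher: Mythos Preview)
Your proposal is correct and follows exactly the paper's own approach: the paper simply says the corollary ``immediately follows from Theorem~\ref{ProbMain}'' with the constant computations referenced from Remark~\ref{p1case}(a), and your argument carries this out in detail. Your bookkeeping is sound---the $2^{(p-1)/2}$ you extract from $\tfrac{1}{\sqrt{2\pi}}\pi^{-(p-1)/2}=\tfrac{2^{(p-1)/2}}{(\sqrt{2\pi})^p}$ cancels precisely against the $2^{-(p-1)/2}$ coming from $\|\tfrac{1}{\sqrt{2}}\bm I_p P_{\bm u}\|_{p-1}$ in Theorem~\ref{ProbMain}'s $I_h(\mathcal M_h)$, leaving exactly $(\sqrt{2\pi})^{-p}H_{m,\alpha}$ times the corollary's $I_h(\mathcal M_h)$ (rather than being ``folded into'' the $(2m)^{\cdots}$ factor as you tentatively suggest).
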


\begin{remark}
The result in this corollary immediately follows from Theorem~\ref{ProbMain}. See Remark~\ref{p1case} (a) for some relevant calculation. Also, similar to Remark~\ref{p1case} (b), the result in this corollary can be extended to the case $p=1$, for which (\ref{ConclRes}) holds with 
\begin{align*}
\beta_h=&\Big(2m\log\frac{1}{h} \Big)^{\frac{1}{2}}  +\Big(2m\log\frac{1}{h}\Big)^{-\frac{1}{2}} \bigg[\Big(\frac{m}{\alpha}-\frac{1}{2}\Big)\log{\log\frac{1}{h} } +\log\bigg\{\frac{(2m)^{\frac{m}{\alpha}-\frac{1}{2}}}{\sqrt{2\pi}} H_{m,\alpha} I_h(\mathcal {M}_h)\bigg\}\bigg],
\end{align*}
where $I_h(\mathcal {M}_h) = 2\int_{\mathcal {L}_{h}} \|B_{\bm{s},h} P_{\bm{s}}\|_{m} d\mathcal{H}_m(\bm{s})$. 
\end{remark}


\section{Proofs}\label{proof}

\subsection{Geometric construction for the proof of Theorem~\ref{fixedmanifold}}\label{geoconstuct}
The proof of Theorem~\ref{fixedmanifold} relies on some geometric construction on manifolds with positive reach, which we present first. Let $M$ be a $r$-dimensional submanifold of $\mathbb{R}^n$. Suppose it has positive reach, i.e., $\Delta(M)>0$. 
%
For $\varepsilon,\eta>0$, a set of points $Q$ on $M$ is called a $(\varepsilon,\eta)$-sample, if \\[-20pt]
\begin{itemize}
\item[(i)] $\varepsilon$-covering: for any $x\in M$, there exists $y\in Q$ such that $\|x-y\|\leq \varepsilon$;\\[-20pt]
\item[(ii)] $\eta$-packing: for any $x,y\in Q$, $\|x-y\| > \eta$.\\[-20pt]
\end{itemize}
For simplicity, we alway use $\eta=\varepsilon$, and such an $(\varepsilon,\varepsilon)$-sample is called an $\varepsilon$-net. It is known that an $\varepsilon$-net always exists for any positive real $\varepsilon$ when $M$ is bounded (Lemma 5.2, Boissonnat, Chazal and Yvinec~\cite{Boissonnat:2018}). Let $N_\varepsilon$ be the cardinality of this $\varepsilon$-net. Let 
\begin{align*}
&P_\varepsilon = \max\{n: \; \text{there exists an } \varepsilon\text{-packing of } M \text{ of size } n\},\\
&C_\varepsilon = \min\{n: \; \text{there exists an } \varepsilon\text{-covering over } M \text{ of size } n\},
\end{align*}
which are called the $\varepsilon$-packing and $\varepsilon$-covering numbers, respectively. It is known that (see Lemma 5.2 in Niyogi et al.~\cite{Niyogi:2008}) $$P_{2\varepsilon} \leq C_\varepsilon \leq N_\varepsilon \leq P_{\varepsilon}.$$

Also it is given on page 431 of Niyogi et al. (2008) that when $\varepsilon<\Delta(M)/2$
\begin{align*}
P_\varepsilon \leq \frac{\mathscr{H}_r(M)}{[\cos^r (\theta)]\varepsilon^r B_r},
\end{align*}
where $B_r$ is the volume of the unit $r$-ball, and $\theta=\arcsin(\varepsilon/2).$ This implies that $N_\varepsilon = O(\varepsilon^{-r})$, as $\varepsilon\rightarrow0$, when $\mathscr{H}_r(M)$ is bounded. \\

Let $\{x_1,\cdots,x_{N_\varepsilon}\}\subset M$ be an $\varepsilon$-net. With this $\varepsilon$-net, we can construct a Voronoi diagram restricted on $M$ consisting of $N_\varepsilon$ Voronoi cells $V_1,\cdots,V_{N_\varepsilon}$, where $V_i=\{x\in M: \|x - x_i \| \leq \|x - x_j\|,\; \text{for all }j\neq i\}$. The Voronoi diagram gives a partition of $M$, that is $M = \cup_{i=1}^{N_\varepsilon} V_i$. Due to the definition of the $\varepsilon$-net, we have that 
\begin{align*}
(\mathcal{B}(x_i,\varepsilon/2)\cap M) \subset V_i \subset (\mathcal{B}(x_i,\varepsilon)\cap M),\; i=1,\cdots N_\varepsilon.
\end{align*}
In other words, the shape of all the Voronoi cells is always not very thin.
\subsection{Proof of Theorem~\ref{fixedmanifold}}
We first give a lemma used in the proof of Theorem~\ref{fixedmanifold}.
\begin{lemma}\label{fixedanxillary}
Suppose that the conditions in Theorem~\ref{fixedmanifold} hold. For any subset $U\subset \mathcal{M}$, if there exists a diffeomorphism $\psi:U\mapsto \Omega\subset\mathbb{R}^r$, where $\Omega=\psi(U)$ is a closed Jordan set of positive $r$-dimensional Lebesgue measure, then as $u\rightarrow\infty$
\begin{align}\label{subsetextreme}
\mathbb{P} \left(\sup_{\bm{t}\in U} X(\bm{t}) > u\right) = H_{R,\pmb{\alpha}} \int_{U} \prod_{j=1}^k \|D_{j,\bm{t}} P_{j,\bm{t}}\|_{r_j} d\mathcal{H}_r(\bm{t})  \prod_{i=1}^k u^{2r_i/\alpha_i} \Psi(u) (1+o(1)).
\end{align}
\end{lemma}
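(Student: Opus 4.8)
The plan is to reduce the supremum of $X$ over the curved piece $U\subset\mathcal M$ to a supremum of a Gaussian field over the flat Jordan set $\Omega=\psi(U)\subset\mathbb R^r$, and then to apply the classical Piterbarg-type double sum machinery for locally stationary fields on Euclidean domains. First I would use the diffeomorphism $\psi$ to define the pulled-back field $\widetilde X(\bm x)=X(\psi^{-1}(\bm x))$, $\bm x\in\Omega$, which is again centered Gaussian with unit variance, and $\sup_{\bm t\in U}X(\bm t)=\sup_{\bm x\in\Omega}\widetilde X(\bm x)$. The key computation is to translate the local-$(E,\pmb\alpha,D_{\bm t})$-stationarity of $X$ on $\mathcal M$ into a local stationarity statement for $\widetilde X$ on $\Omega$: near a point $\bm t_0=\psi^{-1}(\bm x_0)$ the manifold looks like its tangent space $T_{\bm t_0}\mathcal M$, and the displacement $\bm t_1-\bm t_2$ is, to first order, $J(\bm x_0)(\bm x_1-\bm x_2)$ where $J$ is an $n\times r$ matrix whose columns span $T_{\bm t_0}\mathcal M$ (obtained from the differential of $\psi^{-1}$). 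Because of the block structure in (A1)-(A2), this $J$ is block diagonal with blocks $P_{j,\bm t_{(j)}}$ (up to a change of coordinates on $\Omega$ that is absorbed into a Jacobian factor), so that
\begin{align*}
r_{\widetilde X}(\bm x_1,\bm x_2)=1-\sum_{j=1}^k\|D_{j,\bm t_0}P_{j,\bm t_{0,(j)}}\,\text{(local displacement)}\|^{\alpha_j}(1+o(1)).
\end{align*}
Thus $\widetilde X$ is, locally and uniformly on $\Omega$, a locally $(E,\pmb\alpha)$-stationary field in the Euclidean sense with the effective scaling matrices $D_{j,\bm t_0}P_{j,\bm t_{0,(j)}}$.

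Next I would invoke the standard local excursion asymptotics for such Euclidean fields: partition $\Omega$ into small cubes of side $\varepsilon$, and on each cube the probability that $\sup\widetilde X>u$ behaves like the generalized Pickands constant $H_{R,\pmb\alpha}$ times the appropriate volume factor $\prod_j\|D_{j,\bm t_0}P_{j,\bm t_{0,(j)}}\|_{r_j}$ times $\varepsilon^r u^{2r/\alpha}\Psi(u)$, after converting Lebesgue measure $d\bm x$ on $\Omega$ into Hausdorff measure $d\mathcal H_r(\bm t)$ on $U$ via the volume form of $\psi^{-1}$ — which is exactly why the $\|\cdot\|_{r_j}$ norms of minors appear (these encode the local $r_j$-dimensional volume distortion). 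Summing over cubes and letting $\varepsilon\to 0$ gives the integral $\int_U\prod_j\|D_{j,\bm t}P_{j,\bm t}\|_{r_j}\,d\mathcal H_r(\bm t)$. To make the double-sum argument go through I need the non-degeneracy hypothesis $r_X<1$ off the diagonal (from Theorem~\ref{fixedmanifold}), plus compactness of $\bar U$, to control the contribution of well-separated pairs of cubes (the "double sum" terms) and show they are negligible; continuity of $D_{j,\bm t}$ and of the tangent spaces (a consequence of the positive reach / $C^{1,1}$ property) gives the required uniformity of the $o(1)$ terms.

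The main obstacle I anticipate is making the transfer of local stationarity from the manifold to the chart rigorous and uniform, together with identifying the correct volume factor. Two subtleties: (a) the diffeomorphism $\psi$ is only assumed to be, say, $C^1$ (the manifold is merely $C^{1,1}$), so the differential of $\psi^{-1}$ is only continuous, not smooth — I must be careful that the $(1+o(1))$ in the covariance expansion remains uniform over $U$ using only continuity and compactness, and that the second-order terms coming from the curvature of $\mathcal M$ (controlled by $\Delta(\mathcal M)>0$) are genuinely lower order relative to $|D_{\bm t}(\bm t_1-\bm t_2)|_{E,\pmb\alpha}$; and (b) I must verify that the block structure survives the change of chart, i.e.\ that $\|D_{j,\bm t}J_j\|_{r_j}$ for an arbitrary chart Jacobian $J_j$ equals $\|D_{j,\bm t}P_{j,\bm t}\|_{r_j}$ times the Jacobian determinant of the chart, so that after integrating against $d\bm x$ and changing variables to $d\mathcal H_r$ the chart dependence cancels. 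A clean way to handle (b) is the Cauchy–Binet identity for $\|GH\|_{r_j}$ when $H$ is square, which directly yields the factorization; handling (a) is where I would spend the most care, likely by fixing $\bm t_0$, writing $\bm t_i=\psi^{-1}(\bm x_i)$ with $\|\bm x_i-\bm x_0\|$ small, Taylor-expanding $\psi^{-1}$ to first order with a uniformly controlled remainder, and substituting into the local stationarity hypothesis (A2).
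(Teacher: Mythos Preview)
Your proposal is correct and follows essentially the same route as the paper: pull back to $\Omega$ via $\psi^{-1}$, verify that $\widetilde X$ is locally-$(R,\pmb\alpha,A(\widetilde{\bm t})^{1/2})$-stationary by Taylor-expanding $\psi^{-1}$, and then use the Euclidean excursion asymptotics together with the area formula and Cauchy--Binet to recover the integrand $\prod_j\|D_{j,\bm t}P_{j,\bm t}\|_{r_j}$. The only difference is that the paper invokes Theorem~7.1 of Piterbarg~\cite{Piterbarg:1996} directly for the flat field $\widetilde X$ on $\Omega$, so the cube partition and double-sum control you describe are already packaged in that citation and need not be redone here.
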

\begin{proof}
Let $\wt X = X\circ \psi^{-1}$, which is a Gaussian field indexed by $\Omega\subset\mathbb{R}^r$. Consider $\wt{\bm{t}},\wt{\bm{t}}_1,\wt{\bm{t}}_2\in\Omega$ such that $\max\{\|\wt{\bm{t}}-\wt{\bm{t}}_1\|,\|\wt{\bm{t}}-\wt{\bm{t}}_2\|\}\rightarrow0$. Since $\psi$ is a differomphism, we also have $\max\{\|\psi^{-1}(\wt{\bm{t}})-\psi^{-1}(\wt{\bm{t}}_1)\|,\|\psi^{-1}(\wt{\bm{t}})-\psi^{-1}(\wt{\bm{t}}_2)\|\}\rightarrow0$. Let $J_{\psi^{-1}}$ be the Jacobian matrix of $\psi^{-1}$, whose dimension is $n\times r$. Using assumption (A1), we have
\begin{align*}
\text{Cov} (\wt X(\wt{\bm{t}}_1), \wt X(\wt{\bm{t}}_2)) = & \text{Cov}(X(\psi^{-1}(\wt{\bm{t}}_1)), X(\psi_i^{-1}(\wt{\bm{t}}_2))) \\
= & 1 - | D_{\psi^{-1}(\wt{\bm{t}})} (\psi^{-1}(\wt{\bm{t}}_1)-\psi^{-1}(\wt{\bm{t}}_2))|_{E,\pmb{\alpha}} (1+o(1)) \\
= & 1 - | D_{\psi^{-1}(\wt{\bm{t}})} J_{\psi^{-1}}(\wt{\bm{t}})(\wt{\bm{t}}_1- \wt{\bm{t}}_2)|_{E,\pmb{\alpha}} (1+o(1)),
\end{align*}
where in the last step we have used a Taylor expansion. Since the columns of the Jacobian matrix $J_{\psi^{-1}}$ span the tangent space $T_{\psi^{-1}(\wt{\bm{t}})} \mathcal{M}$, and the matrix $D_{\psi^{-1}(\wt{\bm{t}})}$ is assumed to be nonsingular, the matrix $D_{\psi^{-1}(\wt{\bm{t}})} J_{\psi^{-1}}(\wt{\bm{t}})$ is of full rank, and therefore 
\begin{align*}
A(\wt{\bm{t}}) : = [J_{\psi^{-1}}(\wt{\bm{t}})]^T[D_{\psi^{-1}(\wt{\bm{t}})}]^T D_{\psi^{-1}(\wt{\bm{t}})} J_{\psi^{-1}}(\wt{\bm{t}})
\end{align*}
is positive definite. Also note that $A(\wt{\bm{t}})$ is block diagonal matrix, where the diagonal blocks have dimension $r_i\times r_i$, $i=1,\cdots,k$. Let $A(\wt{\bm{t}})^{1/2}$ be the principal square root matrix of $A(\wt{\bm{t}})$. We have that 
\begin{align*}
\text{Cov} (\wt X(\wt{\bm{t}}_1), \wt X(\wt{\bm{t}}_2)) = 1 - |A(\wt{\bm{t}})^{1/2}(\wt{\bm{t}}_1- \wt{\bm{t}}_2)|_{R,\pmb{\alpha}} (1+o(1)).
\end{align*}
Using Theorem 7.1 in Piterbarg~\cite{Piterbarg:1996}, we obtain that $u\rightarrow\infty$
\begin{align*}
\mathbb{P} \left(\sup_{\wt{\bm{t}}\in \Omega} \wt X(\wt{\bm{t}})> u\right) = H_{R,\pmb{\alpha}} \int_{\Omega} \det [A(\wt{\bm{t}})^{1/2}] d\mathcal{H}_r(\wt{\bm{t}})  \prod_{i=1}^k u^{2r_i/\alpha_i} \Psi(u) (1+o(1)).
\end{align*}
Using the area formula on manifolds (see page 117, Evans and Gariepy~\cite{Evans:1992}) and noticing that $\sup_{\wt{\bm{t}}\in \Omega} \wt X(\wt{\bm{t}}) = \sup_{\bm{t}\in U} X(\bm{t})$, we have 
\begin{align*}
\mathbb{P} \left(\sup_{\bm{t}\in U} X(\bm{t}) > u\right) = H_{R,\pmb{\alpha}} \int_{U} \frac{\det [A(\psi(\bm{t}))^{1/2}]}{\det[B(\psi(\bm{t}))^{1/2}]} d\mathcal{H}_r(\bm{t})  \prod_{i=1}^k u^{2r_i/\alpha_i} \Psi(u) (1+o(1)),
\end{align*}
where $B(\psi(\bm{t}))=[J_{\psi^{-1}}(\psi(\bm{t}))]^TJ_{\psi^{-1}}(\psi(\bm{t}))$. Let $\{p_1(\bm{t}), \cdots,p_r(\bm{t})\}$ be an orthonormal basis of the tangent space $T_{\bm{t}} \mathcal{M}$ and write $P_{\bm{t}}=[p_1(\bm{t}), \cdots,p_r(\bm{t})]$. There exists a $r\times r$ nonsingular matrix $Q_{\bm{t}}$ such that $J_{\psi^{-1}}(\psi(\bm{t})) = P_{\bm{t}}Q_{\bm{t}}$. Hence 
\begin{align*}
\frac{\det [A(\psi(\bm{t}))^{1/2}]}{\det[B(\psi(\bm{t}))^{1/2}]} = \frac{\det [Q_{\bm{t}}] \det[(P_{\bm{t}}^T D_{\bm{t}}^T D_{\bm{t}} P_{\bm{t}})^{1/2}]}{\det [Q_{\bm{t}}] } = \det[(P_{\bm{t}}^T D_{\bm{t}}^T D_{\bm{t}} P_{\bm{t}})^{1/2}]
\end{align*}

For $j=1,\cdots,k$, let $P_{j,\bm{t}}$ be a $e_j\times r_j$ matrix whose columns span the tangent space of $M_j$. Then by the Cauchy-Binet formula (see Broida and Williamson~\cite{Broida:1989}, page 214), we have
\begin{align}\label{determine}
\det[P_{\bm{t}}^T D_{\bm{t}}^T D_{\bm{t}} P_{\bm{t}}]^{1/2} = \prod_{j=1}^k \det[(P_{j,\bm{t}}^T D_{j,\bm{t}}^T D_{j,\bm{t}} P_{j,\bm{t}})^{1/2}] = \prod_{j=1}^k \|D_{j,\bm{t}} P_{j,\bm{t}}\|_{r_j}.
\end{align}
Therefore we get (\ref{subsetextreme}).
\end{proof}

{\bf Proof of Theorem~\ref{fixedmanifold}}
\begin{proof}
For any $\bm{t}\in\mathcal{M}$, let $\rho\equiv\rho_{\bm{t}}: \mathcal{B}(\bm{t},\epsilon)\cap \mathcal{M}\mapsto T_{\bm{t}}\mathcal{M}$ be the projection map to the tangent space $T_{\bm{t}}\mathcal{M}$, that is, $\rho$ is a restriction of the normal projection $\pi_{T_{\bm{t}}\mathcal{M}}$ to the set $\mathcal{B}(\bm{t},\epsilon)\cap \mathcal{M}$. When $\epsilon<\Delta(\mathcal{M})/2$, it is known that $\rho$ is a diffeomorphism (see Lemma 5.4, Niyogi et al.~\cite{Niyogi:2008}). The Jacobian of $\rho$, denoted by $J_\rho$, is a differential map that projects the tangent space of $\mathcal{B}(\bm{t},\epsilon)\cap \mathcal{M}$ at any point in it onto $T_{\bm{t}}\mathcal{M}$. It is also known that the angles between two tangent spaces $T_{\bm{p}}\mathcal{M}$ and $T_{\bm{q}}\mathcal{M}$ is bounded by $L\|{\bm{p}}-{\bm{q}}\|$ for ${\bm{p}},{\bm{q}}\in \mathcal{B}(\bm{t},\epsilon) \cap \mathcal{M}$ when $\epsilon<\Delta(\mathcal{M})/2$ (see Propositions 6.2 and 6.3 of Niyogi et al.~\cite{Niyogi:2008}), where $L>0$ is a constant only depending on $\Delta(\mathcal{M})$. Hence $J_\rho$ is Liptschtiz continuous on $\mathcal{B}(\bm{t},\epsilon)\cap \mathcal{M}$. Suppose that $\{{\bm{e}}_1,\cdots,{\bm{e}}_r\}$ is an orthonormal basis of $T_{\bm{t}}\mathcal{M}$. Let $\iota: T_{\bm{t}}\mathcal{M}\mapsto \mathbb{R}^r$ be a map such that $\iota({\bm{y}})=(y_1,\cdots,y_r)\in\mathbb{R}^r$ for ${\bm{y}}=y_1{\bm{e}}_1+\cdots y_r{\bm{e}}_r\in T_{\bm{t}}\mathcal{M}$. Then $\psi:=\iota\circ\rho$ is the diffeomorphism we need to apply Lemma~\ref{fixedanxillary}. \\

We choose $\epsilon<\Delta(\mathcal{M})/10$. Using the method in Section~\ref{geoconstuct}, we find an $\epsilon$-net $\{\bm{t}_1,\cdots,\bm{t}_{N_\epsilon}\}$ for $\mathcal{M}$, and construct a partition of $\mathcal{M}$ with Voronoi cells $V_1,\cdots,V_{N_\epsilon}$, where $N_\epsilon = O(\epsilon^{-r})$. Since $V_i \subset (\mathcal{B}(\bm{t}_i,\epsilon)\cap \mathcal{M})$, $\rho\equiv\rho_{\bm{t}_i}$ is a diffeomorphism on $V_i$, $i=1,\cdots,N_\epsilon$.\\

Using Lemma~\ref{fixedanxillary}, we have that 
\begin{align*}
\mathbb{P} \left(\sup_{\bm{t}\in V_i} X(\bm{t}) > u\right) = H_{R,\pmb{\alpha}} \int_{V_i} \prod_{j=1}^k \|D_{j,\bm{t}} P_{j,\bm{t}}\|_{r_j} d\mathcal{H}_r(\bm{t})  \prod_{j=1}^k u^{2r_j/\alpha_j} \Psi(u) (1+o(1)),
\end{align*}
as $u\rightarrow\infty$, and hence
\begin{align}\label{TotalSum}
\sum_{i=1}^{N_\epsilon} \mathbb{P} \left(\sup_{\bm{t}\in V_i} X(\bm{t}) > u\right) = H_{R,\pmb{\alpha}} \int_{\mathcal{M}} \prod_{j=1}^k \|D_{j,\bm{t}} P_{j,\bm{t}}\|_{r_j} d\mathcal{H}_r(\bm{t})  \prod_{j=1}^k u^{2r_j/\alpha_j} \Psi(u) (1+o(1)).
\end{align}

Using the Bonferroni inequality, we have
\begin{align}\label{DoubleSum}
& \sum_{i=1}^{N_\epsilon} \mathbb{P} \left(\sup_{\bm{t}\in V_i} X(\bm{t}) > u\right)   -  \sum_{i\neq j} \mathbb{P} \left(\sup_{\bm{t}\in V_i} X(\bm{t}) > u,\; \sup_{\bm{t}\in V_j} X(\bm{t}) > u \right) \nonumber \\
 & \hspace{5cm} \leq  \mathbb{P} \left(\sup_{\bm{t}\in \mathcal{M}} X(\bm{t}) > u\right) \leq  \sum_{i=1}^{N_\epsilon} \mathbb{P} \left(\sup_{\bm{t}\in V_i} X(\bm{t}) > u\right)  .
\end{align}

For $i\neq j$, define $d_{\max}(V_i,V_j)=\sup\{\|x-y\|:\; x\in V_i, \;y\in V_j\}$ and $d_{\min}(V_i,V_j)=\inf\{\|x-y\|:\; x\in P_i, \;y\in P_j\}$. We divide the set of indices $S=\{(i,j):\;1\leq i\neq j\leq N_\epsilon\}$ into $S_1$ and $S_2$, where $S_1=\{(i,j)\in S:\; d_{\max}(V_i,V_j)\leq 5\epsilon\}$ and $S_2=\{(i,j)\in S:\; d_{\max}(V_i,V_j)\}> 5\epsilon\}$. If $(i,j)\in S_1$, then there exists $\bar{\bm{t}}\in\mathcal{M}$ such that $(V_i\cup V_j)\subset (\mathcal{B}(\bar{\bm{t}},5\epsilon)\cap\mathcal{M})\subset (\mathcal{B}(\bar{\bm{t}},\Delta(\mathcal{M})/2)\cap\mathcal{M}) $, and therefore using Lemma~\ref{fixedanxillary}, we have as $u\rightarrow\infty$
\begin{align*}
& \mathbb{P} \left(\sup_{\bm{t}\in V_i} X(\bm{t}) > u,\; \sup_{\bm{t}\in V_j} X(\bm{t}) > u \right) \\
= & \mathbb{P} \left(\sup_{\bm{t}\in V_i} X(\bm{t}) > u \right) + \mathbb{P} \left(\sup_{\bm{t}\in V_j} X(\bm{t}) > u \right) - \mathbb{P} \left(\sup_{\bm{t}\in V_i\cup V_j} X(\bm{t}) > u \right) \\
=& o(1) H_{R,\pmb{\alpha}} \int_{V_i\cup V_j} \prod_{j=1}^k \|D_{j,\bm{t}} P_{j,\bm{t}}\|_{r_j} d\mathcal{H}_r(\bm{t})  \prod_{j=1}^k u^{2r_j/\alpha_j} \Psi(u).
\end{align*}

Therefore as $u\rightarrow\infty$
\begin{align}\label{SubSumBound2}
\sum_{(i,j)\in S_1}\mathbb{P} \left(\sup_{\bm{t}\in V_i} X(\bm{t}) > u,\; \sup_{\bm{t}\in V_j} X(\bm{t}) > u \right) = o\left( \prod_{i=1}^k u^{2r_i/\alpha_i} \Psi(u)\right).
\end{align}

Next we proceed to consider $(i,j)\in S_2$. Let $Y(\bm{t},\bm{s})=X(\bm{t})+X(\bm{s})$. Note that 
\begin{align}\label{IntersectionBound}
\mathbb{P}\left(\sup_{\bm{t}\in V_i} X(\bm{t}) > u,\; \sup_{\bm{t}\in V_j} X(\bm{t}) > u \right)&\leq 
\mathbb{P}\left(\sup_{\bm{t}\in V_i,\bm{s}\in V_j} Y(\bm{t},\bm{s})>2u\right).
\end{align}
In order to further bound the probability on the right-hand side, we will use the Borell inequality \cite{Borell:1975} (see Theorem D.1 in Piterbarg~\cite{Piterbarg:1996}).
%
%
%
%
%
%
%
%
Notice that $d_{\min}(V_i,V_j) \geq d_{\max}(V_i,V_j) - 4\epsilon$, 
and hence
\begin{align*}
\min_{(i,j)\in S_2}d_{\min}(V_i,V_j) \geq \epsilon.
\end{align*}

%
The assumption in the theorem guarantees that $\rho:=\sup_{\|\bm{t}-\bm{s}\|\geq\epsilon}r_X(\bm{t},\bm{s})<1.$
This then yields that 
\begin{align*}
\max_{(i,j)\in S_2}\sup_{(\bm{t},\bm{s})\in V_i\times V_j} \Var\left(Y(\bm{t},\bm{s})\right)\leq 2+2\rho
\end{align*}
and\\[-20pt]
\begin{align*}
\sup_{(i,j)\in S_2}\sup_{(\bm{t},\bm{s})\in V_i\times V_j}\mathbb{E}\left(Y(\bm{t},\bm{s})\right)=0.
\end{align*}
Now it remains to show that $\mathbb{P}\left(\sup_{\bm{t}\in V_i,\bm{s}\in V_j} Y(\bm{t},\bm{s})>b\right) \leq 1/2$ for some constant $b$ for all $(i,j)\in S_2$ in order to apply the Borell inequality to $Y(\bm{t},\bm{s})$. Such $b$ exists because 
\begin{align*}
&\mathbb{P}\left(\sup_{\bm{t}\in V_i,\bm{s}\in V_j} Y(\bm{t},\bm{s})>u\right) \leq \mathbb{P}\left(\sup_{\bm{t}\in \mathcal{M},\bm{s}\in \mathcal{M} } Y(\bm{t},\bm{s})>u\right) \leq \mathbb{P}\left(\sup_{\bm{t}\in \mathcal{M}} X(\bm{t})>u/2\right)\\
\leq &H_{R,\pmb{\alpha}} \int_{\mathcal{M}} \prod_{j=1}^k \|D_{j,\bm{t}} P_{j,\bm{t}}\|_{r_j} d\mathcal{H}_r(\bm{t})  \prod_{j=1}^k \left(\frac{u}{2}\right)^{2  r_j/\alpha_j} \Psi\left(\frac{u}{2}\right) (1+o(1)),
\end{align*}
which tends to zero as $u\rightarrow\infty$. %
%
%
%
The application of the Borell inequality now gives that 
\begin{align}\label{PSupBound}
\mathbb{P}\left(\sup_{\bm{t}\in V_i,\bm{s}\in V_j} Y(\bm{t},\bm{s})>2u\right)&\leq 2\bar\Phi\bigg(\frac{u-b/2}{\sqrt{(1+\rho)/2}}\bigg). 
\end{align}
Also note that the cardinality $|S_2| \leq N_\epsilon^2 \leq C\epsilon^{-2r}$, 
for some constant $C>0$. Hence
%
\begin{align}\label{SubSumBound1}
\sum_{(i,j)\in S_2 } \mathbb{P}\left(\sup_{\bm{t}\in V_i} X(\bm{t}) > u,\; \sup_{\bm{t}\in V_j} X(\bm{t}) > u \right)
\leq &  2|S_2| \bar\Phi\bigg(\frac{u-b/2}{\sqrt{(1+\rho)/2}}\bigg)
=o\left( \prod_{i=1}^k u^{2r_i/\alpha_i} \Psi(u)\right),
\end{align}
%
as $u\rightarrow\infty$. Combining (\ref{TotalSum}), (\ref{DoubleSum}), (\ref{SubSumBound2}) and (\ref{SubSumBound1}), we have the desired result.
%
%
%
\end{proof}

\subsection{Geometric construction for the proof of Theorem~\ref{ProbMain}}
We first give some geometric construction used in the proof of Theorem~\ref{ProbMain}.\\

(i) {\em Voronoi diagram on ${\mathcal{M}_h}$}: Let $\ell_1=\inf_{h\in(0,h_0]}\Delta(\mathcal {M}_{h,1})/2$. It is known from Section~\ref{geoconstuct} that there exists an $(h\ell_1)$-net $\{\bm{s}_1,\cdots,\bm{s}_{m_h}\}$ on $\mathcal{M}_{h}$, where $m_h=O((h\ell_1)^{-{r_1}})$ is the cardinality of the net. With this $(h\ell_1)$-net and using the technique described in Section~\ref{geoconstuct}, we construct a Voronoi diagram restricted on $\mathcal {M}_{h,1}$. The collections of the cells are denoted by $\{J_{k,h}: k=1,\cdots,m_h\}$, which forms a partition of $\mathcal {M}_{h,1}$. Similarly for $\mathcal {M}_{h,2}$, with $\ell_2=\inf_{h\in(0,h_0]}\Delta(\mathcal {M}_{h,2})/2$, there exists an $\ell_2$-net $\{\bm{u}_1,\cdots,\bm{u}_{n_h}\}$ on $\mathcal {M}_{h,2}$, where $n_h=O(\ell_2^{-r_2})$. The cells of the corresponding Voronoi diagram on $\mathcal {M}_{h,2}$ are denoted by $U_1,\cdots,U_{n_h}$.\\


(ii) {\em Separation of Voronoi cells:} The construction of the Voronoi diagram restricted on ${\mathcal{M}_h}^{(1)}$ guarantees that each cell $J_{k,h} \supset  (\mathcal {M}_{h,1}\cap \mathcal{B}(\bm{s}_k,(h\ell_1)/2))$. In other words, $J_{k,h}$ is not too thin. For $0<\delta<\ell_1/2$, let $\partial \mathcal{J}_h = \cup_{k=1}^{m_h} (\partial J_{k,h})$ be the union of all the boundaries of the cells. Let 
\begin{align*}
\mathcal {B}^{h\delta} = \{x\in\mathcal{M}_{h}: d(x,\partial \mathcal{J}_h) \leq h\delta\},
\end{align*} 
which is the $(h\delta)$-enlarged neighborhood of $\partial \mathcal{J}_h$. We obtain $J_{k,h}^\delta = J_{k,h}\backslash \mathcal {B}^{h\delta}$ and $J_{k,h}^{-\delta} = J_{k,h}\backslash J_{k,h}^\delta$ for $1\leq k\leq m_h$. The geometric construction ensures that if $k\neq k^\prime$, $J_{k,h}^\delta$ and $J_{k^\prime,h}^\delta$ are separated by $\mathcal {B}^{h\delta}$, which is partitioned as $\{J_{k,h}^{-\delta},\; k=1,\cdots,m_h\}$ .\\
%

(iii) {\em Discretization:} We construct a dense grid on $\mathcal{M}_h$ as follows. Let $\Pi_{k,j}=(\Pi_{\bm{s}_k},\Pi_{\bm{u}_j})$ be the projection map from $J_{k,h}\times U_j$ to the tangent space $T_{\bm{s}_k}\mathcal {M}_{h,1} \times T_{\bm{u}_j} \mathcal {M}_{h,2}$. Let the image of $J_{k,h}\times U_j$ be $\wt J_{k,h}\times \wt U_j$. The choice of the $\ell_1$ and $\ell_2$ guarantees that $\Pi_{k,j}$ is a homeomorphism. Let $\{M_{\bm{s}_k}^i: i=1,\cdots,r_1\}$ be orthonormal vectors spanning the tangent space $T_{\bm{s}_k}\mathcal {M}_{h,1}$. For a given $\gamma > 0,$ consider the (discrete) set $\wt\Xi_{h\gamma \theta^{-2/\alpha_1}}(\wt J_{k,h})=\{\bm{t}\in \wt J_{k,h}: \bm{t}=\bm{s}_k+(h\gamma \theta^{-2/\alpha_1})\sum_{i=1}^{r_1}(e_iM_{\bm{s}_k}^i), e_i\in\mathbb{Z}\}$ and let $\Xi_{h\gamma \theta^{-2/\alpha_1}}(J_{k,h})=\Pi_{\bm{s}_k}^{-1}(\wt\Xi_{h\gamma \theta^{-2/\alpha_1}}(\wt J_{k,h}))$, which is a subset of $J_{k,h}$. Similarly, let $\{M_{\bm{u}_j}^i: i=1,\cdots,r_2\}$ be orthonormal vectors spanning the tangent space $T_{\bm{u}_j}\mathcal {M}_{h,2}$ and we discretize $\wt U_j$ with $\wt\Xi_{\gamma \theta^{-2/\alpha_2}}(\wt U_j)=\{\bm{v}\in\wt U_j: \bm{v}=\bm{u}_j+\sum_{i=1}^{r_2}e_i\gamma \theta^{-2/\alpha_2}M_{\bm{u}_j}^i, e_i\in\mathbb{Z}\}$ and denote $\Xi_{\gamma \theta^{-2/\alpha_2}}(U_j) = \Pi_{\bm{u}_j}^{-1} (\wt\Xi_{\gamma \theta^{-2/\alpha_2}}(\wt U_j))$. \\ 

We denote the union of all the grid points by
\begin{align}\label{fullgrid}
\Gamma_{h,\gamma,\theta} & = \cup_{k=1}^{m_h}\cup_{j=1}^{n_h} [\Xi_{h\gamma \theta^{-2/\alpha_1}}(J_{k,h}) \times \Xi_{\gamma \theta^{-2/\alpha_2}}(U_j)] \\
& = [\cup_{k=1}^{m_h} \Xi_{h\gamma \theta^{-2/\alpha_1}}(J_{k,h})]\times [\cup_{j=1}^{n_h} \Xi_{\gamma \theta^{-2/\alpha_2}}(U_j)].
%
\end{align} 

Let $N_{h}^{(1)}$ be the cardinality of the set $\cup_{k=1}^{m_h} \Xi_{h\gamma \theta^{-2/\alpha_1}}(J_{k,h})$. Then obviously,

\begin{align*}
N_{h}^{(1)} = |\cup_{k=1}^{m_h} \wt\Xi_{h\gamma \theta^{-2/\alpha_1}}(\wt J_{k,h})| = O\left(  \frac{\sum_{k=1}^{m_h} \mathcal{H}_{r_1}(\wt J_{k,h})}{(h\gamma\theta^{-2/\alpha_1})^{r_1}}\right) & = O\left(  \frac{\mathcal{H}_{r_1}(\mathcal {M}_{h,1})}{(h\gamma\theta^{-2/\alpha_1})^{r_1}}\right) \\
&=O(\theta^{2r_1/\alpha_1} h^{-r_1}\gamma^{-r_1}).
\end{align*}
Similarly, the cardinality of $\cup_{j=1}^{n_h} \Xi_{\gamma \theta^{-2/\alpha_2}}(U_j)$ is given by
\begin{align}\label{nustar}
N_{h}^{(2)}: = |\cup_{j=1}^{n_h} \Xi_{\gamma \theta^{-2/\alpha_2}}(U_j)| = O(\theta^{2r_2/\alpha_2} \gamma^{-r_2}).
\end{align}

It is easy to see that $(\mathcal{J}_{h}^\delta \times \mathcal {M}_{h,2}) \cap \Gamma_{h,\gamma,\theta} = [\cup_{k=1}^{m_h} \Xi_{h\gamma \theta^{-2/\alpha_1}}(J_{k,h}^\delta)]\times [\cup_{j=1}^{n_h} \Xi_{\gamma \theta^{-2/\alpha_2}}(U_j)],$ 
and
\begin{align}\label{nhdeltastar}
N_{h,\delta}^{(1)}: =|\cup_{k=1}^{m_h} \Xi_{h\gamma \theta^{-2/\alpha_1}}(J_{k,h})| = O(N_{h}^{(1)}) = O(\theta^{2r_1/\alpha_1} h^{-r_1}\gamma^{-r_1}).
\end{align}

\subsection{Proof of Theorem~\ref{ProbMain}}\label{proofmain}
For a random process or field $X(t)$, $t\in \mathcal{S}\subset\mathbb{R}^n$ and $\theta\in\mathbb{R}$, we denote
\begin{align*}
& \mathbb{P}_X(\theta, \mathcal{S}) = \mathbb{P} (\sup_{t\in \mathcal{S}} X(t) \leq \theta),\\
& \mathbb{Q}_X(\theta, \mathcal{S}) = 1- \mathbb{P}_X(\theta, \mathcal{S}) .
\end{align*}
With $\beta_h$ in (\ref{BetaExp}), let
\begin{align}\label{ThetaExp}
\theta_{h,z} &=\beta_h+\frac{1}{\sqrt{2r_1\log(1/h)}}z.
\end{align}
With this notation, we can rewrite (\ref{ConclRes}) as
\begin{align*}
\lim_{h\rightarrow0}\mathbb{P}_{Z_h}(\theta_{h,z}, \mathcal{M}_h)= e^{-e^{-z}}.
\end{align*}

To prove Theorem~\ref{ProbMain}, we need to establish a sequence of approximations using the above geometric construction, detailed in Lemmas~\ref{lemma3.1}-\ref{lemma3.6} as follows.\\


Recall that $I_h(\mathcal{A}) = \int_{\mathcal{A}}\|D_{\bm{t},h} P_{\bm{t}}\|_{r_1} d\mathcal{H}_r(\bm{t})$ for any measurable set $\mathcal{A}\subset \mathcal{M}_h$. In the following lemma we consider $\theta$ as a large number with $\theta = \theta_{h,z}$ as a special case in mind.

\begin{lemma}\label{lemma3.1}
For any $\epsilon>0$, there exist $\theta_0>0$ such that for all $\theta\geq\theta_0$, $0<h\leq h_0$, and $J_k\in \{J_{k,h}, J_{k,h}^\delta, J_{k,h}^{-\delta}\}$ with $1\leq k\leq m_h(J)$, we have for some $\epsilon_{k,h}$ with $|\epsilon_{k,h}|\leq \epsilon$,
\begin{align}\label{SumToInt}
\frac{\mathbb{Q}_{Z_h}(\theta, J_k\times \mathcal {M}_{h,2} )}{\theta^{2(r_1/\alpha_1+r_2/\alpha_2)}\Psi(\theta)}
=(1+ \epsilon_{k,h})h^{-r_1}\;
H_{R,\pmb{\alpha}} I_h(J_k \times\mathcal {M}_{h,2}).
\end{align}
%
\end{lemma}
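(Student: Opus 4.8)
The plan is to reduce the statement to Theorem~\ref{fixedmanifold} (more precisely, to its local version Lemma~\ref{fixedanxillary}) applied to a single frozen value of $h$, and then carefully track the dependence on $h$ and on the cell $J_k$. First I would observe that for fixed $h$, the field $Z_h$ restricted to $\mathcal{M}_h = \mathcal{M}_{h,1}\times\mathcal{M}_{h,2}$ satisfies the hypotheses of Theorem~\ref{fixedmanifold} with structure $(E,\pmb{\alpha})$ and $R=(r_1,r_2)$, so a direct application gives an asymptotic (as $\theta\to\infty$) for $\mathbb{Q}_{Z_h}(\theta, J_k\times\mathcal{M}_{h,2})$ of the form $H_{R,\pmb{\alpha}} I_h^*(J_k\times\mathcal{M}_{h,2})\,\theta^{2(r_1/\alpha_1+r_2/\alpha_2)}\Psi(\theta)(1+o(1))$, where $I_h^*$ is the integral built from the matrices $D_{\bm{s},h}$ evaluated on the \emph{unrescaled} manifold. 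The factor $h^{-r_1}$ then appears by a change of variables: passing from $\mathcal{M}_h$ to $\overbar{\mathcal{M}}_h=\xi_h^{-1}(\mathcal{M}_h)$ rescales the first $r_1$ coordinates by $h^{-1}$, and the local stationarity of $\overbar Z_h$ in assumption (B2) is stated precisely in terms of $D_{\xi_h(\bm{t}),h}$ acting on $\overbar{\mathcal{M}}_h$; unwinding the Jacobian of $\xi_h$ on the $r_1$-dimensional tangent directions of $\mathcal{M}_{h,1}$ produces exactly $h^{-r_1}$, and matches $I_h(J_k\times\mathcal{M}_{h,2})$ as defined in the statement with $P_{\bm{t}}$ spanning $T_{\bm{t}}\mathcal{M}_h$ and the norm $\|\cdot\|_{r_1}$.

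The substantive point — and the reason this is phrased as a lemma rather than a one-line corollary — is that the $(1+o(1))$ must be made \emph{uniform} in $h\in(0,h_0]$ and in the cell index $k$. So the real work is to re-examine the proof of Theorem~\ref{fixedmanifold} / Lemma~\ref{fixedanxillary} and verify that every error term can be controlled by constants depending only on the uniform quantities guaranteed by (B1)--(B2): the uniform lower bound on reach $\inf_h \Delta(\mathcal{M}_{h,i})>0$, the uniform bounds on the Hausdorff measures $\mathcal{H}_{r_i}(\mathcal{M}_{h,i})$, the uniform eigenvalue bounds \eqref{eigenbounds} on $[D_{\bm{s},h}^{(i)}]^TD_{\bm{s},h}^{(i)}$, and the uniformity of the $o(1)$ in the local stationarity expansion in (B2). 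Concretely I would: (a) use the geometric construction of Section~\ref{geoconstuct} uniformly, noting that the diffeomorphism $\rho_{\bm{t}}$ onto the tangent space and its Lipschitz constant depend only on $\Delta(\mathcal{M})$, hence are uniform over $h$; (b) invoke Piterbarg's Theorem 7.1 (as in the proof of Lemma~\ref{fixedanxillary}) on the rescaled, flattened cell $\psi(\xi_h^{-1}(J_k\times\mathcal{M}_{h,2}))\subset\mathbb{R}^r$, checking that the Pickands-type constant $H_{R,\pmb{\alpha}}$ and the leading constant depend only on $(R,\pmb{\alpha})$ and on the local covariance matrix $A(\wt{\bm{t}})$, whose eigenvalues are uniformly pinched between positive constants by \eqref{eigenbounds}; (c) since $J_k$ is contained in a ball of radius comparable to $h\ell_1$ (in the first block) and $\ell_2$ (in the second), the \emph{diameter} of the rescaled cell is bounded by a constant independent of $h$ and $k$, which is exactly what is needed for the error in Piterbarg's local expansion to be uniform — this is the crucial structural reason the rescaling by $h^{-1}$ is introduced.

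The main obstacle I anticipate is exactly this uniformity bookkeeping: Piterbarg's Theorem 7.1 is an asymptotic statement for a \emph{fixed} field on a \emph{fixed} set, and converting it into a bound with an error term that is uniform over a family of fields and sets requires either a quantitative version of that theorem or a compactness/subsequence argument. I would handle it by the standard device: suppose for contradiction that \eqref{SumToInt} fails uniformly, extract sequences $h_m\to$ some limit (possibly $0$) and cells $J_{k_m}$ with $|\epsilon_{k_m,h_m}|>\epsilon$; after flattening and rescaling, the cells live in a fixed bounded region of $\mathbb{R}^r$ with covariance structures lying in a compact set (by the uniform eigenvalue and reach bounds), so one can pass to a limiting Gaussian field on a limiting set and derive a contradiction with the continuity of $H_{R,\pmb{\alpha}}\cdot(\text{leading constant})$ in the data; alternatively, one cites a uniform version of Piterbarg's double-sum method directly. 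A secondary, more routine obstacle is checking that the three variants $J_k\in\{J_{k,h},J_{k,h}^\delta,J_{k,h}^{-\delta}\}$ are all handled identically — this is immediate since each is a Jordan-measurable subset of a single cell on which $\rho_{\bm{s}_k}$ is a diffeomorphism, so Lemma~\ref{fixedanxillary} applies verbatim to each, and the integral $I_h$ over each is the corresponding restriction of the same integrand. Finally one notes $\mathcal{M}_{h,2}$ may be a null set ($r_2=0$), in which case the second block drops out and the statement and proof specialize trivially.
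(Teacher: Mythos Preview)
Your proposal is correct and follows essentially the same route as the paper: rescale to $\overbar J_k\times\mathcal{M}_{h,2}$, apply Theorem~\ref{fixedmanifold} to the field $\overbar Z_h$ (which is locally-$(E,\pmb{\alpha},D_{\xi_h(\bm t),h})$-stationary there by (B2)), and recover the factor $h^{-r_1}$ from the change of variables $\overbar I_h(\overbar J_k\times\mathcal{M}_{h,2})=h^{-r_1}I_h(J_k\times\mathcal{M}_{h,2})$. The paper dispatches the uniformity in $k$ and $h$ in a single clause (``the $o(1)$-term is uniform \ldots\ because of assumption (B2)''), whereas you propose a more explicit compactness/contradiction argument; both are valid, and your version is arguably more honest about where the work lies, but the core strategy is identical.
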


\begin{proof}
For $J_k\in \{J_{k,h}, J_{k,h}^\delta, J_{k,h}^{-\delta}\}$, denote $\overbar J_k = \{\bm{t}_{(1)}/h: \bm{t}_{(1)}\in J_k\}$. Then notice that $\overbar J_k$ has a positive diameter and volume. Recall that $\xi_h(\bm{t})=(h\bm{t}_{(1)}^T, \bm{t}_{(2)}^T)^T$ for $\bm{t} = (\bm{t}_{(1)}^T, \bm{t}_{(2)}^T)^T \in \overbar J_k\times \mathcal {M}_{h,2}$ and the Gaussian field $\overbar Z_h(\bm{t}) = Z_h(\xi_h(\bm{t}))$ is locally-$(E,\pmb{\alpha},D_{\xi_h(\bm{t}),h})$-stationary on $\overbar J_k\times \mathcal {M}_{h,2}$. Let $\overbar I_h(\mathcal{A}) = \int_{\mathcal{A}}\|D_{\xi_h(\bm{t}),h} P_{\bm{t}}\|_{r_1} d\mathcal{H}_r(\bm{t})$ for any measurable set $\mathcal{A}\subset \xi_h^{-1}(\mathcal{M}_h)$. Then using Theorem~\ref{fixedmanifold}, we obtain that 
\begin{align*}
\frac{\mathbb{Q}_{\overbar Z_h}(\theta, \overbar J_k\times \mathcal {M}_{h,2} )}{\theta^{2(r_1/\alpha_1+r_2/\alpha_2)}\Psi(\theta)}
= H_{R,\pmb{\alpha}} \overbar I_h(\overbar J_k \times\mathcal {M}_{h,2}) (1+ o(1)) ,
\end{align*}
where the $(1)$-term is uniform in $1\leq k\leq m_h$ and $0<h\leq h_0$, because of assumption (B2). Noticing that $\overbar I_h(\overbar J_k \times\mathcal {M}_{h,2})=h^{-r_1}I_h(J_k \times\mathcal {M}_{h,2})$, we get the desired result.
\end{proof}

\begin{lemma}\label{lemma3.2}
For any $\epsilon>0$, there exist $\gamma_0>0$, $\theta_0>0$ such that for all $\gamma\leq\gamma_0$, $\theta\geq\theta_0$, $0<h\leq h_0$, and $J_k\in \{J_{k,h}, J_{k,h}^\delta, J_{k,h}^{-\delta}\}$ with $1\leq k\leq m_h$, we have for some $\epsilon_{k,h}$ with $|\epsilon_{k,h}|\leq \epsilon$,
\begin{align}\label{SumToInt2}
&\frac{\mathbb{Q}_{Z_h}(\theta, (J_k\times \mathcal {M}_{h,2})\cap \Gamma_{h,\gamma,\theta})}{\theta^{2(r_1/\alpha_1+r_2/\alpha_2)}\Psi(\theta)}
=(1+ \epsilon_{k,h})h^{-r_1}\;
\wt H_{R,\pmb{\alpha}}(\gamma) I_h(J_k \times\mathcal {M}_{h,2}),
\end{align}
where $\wt H_{R,\pmb{\alpha}}(\gamma)$ only depends on $\gamma$ such that $\wt H_{R,\pmb{\alpha}}(\gamma)\rightarrow H_{R,\pmb{\alpha}}$ as $\gamma\rightarrow0.$
%
\end{lemma}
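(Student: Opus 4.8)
The plan is to pass from Lemma~\ref{lemma3.1} to Lemma~\ref{lemma3.2} by a discretization argument, showing that sampling the locally stationary field on the fine grid $\Gamma_{h,\gamma,\theta}$ changes the excursion probability only by a factor depending on the grid mesh $\gamma$, which tends to $1$ (in the sense that the Pickands-type constant converges to its continuum value) as $\gamma\to 0$. First I would work on the rescaled field $\overbar Z_h(\bm{t})=Z_h(\xi_h(\bm{t}))$ on $\overbar J_k\times\mathcal{M}_{h,2}$, where the grid becomes a cubic lattice of mesh $\gamma\theta^{-2/\alpha_1}$ in the first $r_1$ tangent coordinates and mesh $\gamma\theta^{-2/\alpha_2}$ in the last $r_2$ tangent coordinates (the mesh sizes being exactly the Pickands scaling $\theta^{-2/\alpha_i}$ times $\gamma$). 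After pulling back by the tangent-space diffeomorphisms $\Pi_{\bm{s}_k}$, $\Pi_{\bm{u}_j}$ and using the uniform local stationarity (B2) together with (B1) and the eigenvalue bounds (\ref{eigenbounds}), the local covariance structure is $1-|A(\bm{t})^{1/2}(\bm{t}_1-\bm{t}_2)|_{R,\pmb{\alpha}}(1+o(1))$ exactly as in the proof of Lemma~\ref{fixedanxillary}, so the relevant object is the discrete version of Pickands' constant.

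The key steps, in order: (1) reduce to the rescaled field and a single Voronoi product cell $\overbar J_k\times U_j$ as in the proof of Lemma~\ref{lemma3.1}; (2) invoke a discrete double-sum / Pickands-type lemma (the analogue of Theorem~7.1 in Piterbarg~\cite{Piterbarg:1996} for evaluations on a lattice rather than a cube) to get that
\begin{align*}
\frac{\mathbb{Q}_{\overbar Z_h}(\theta, (\overbar J_k\times\mathcal{M}_{h,2})\cap\Gamma)}{\theta^{2(r_1/\alpha_1+r_2/\alpha_2)}\Psi(\theta)} = \wt H_{R,\pmb{\alpha}}(\gamma)\,\overbar I_h(\overbar J_k\times\mathcal{M}_{h,2})(1+o(1)),
\end{align*}
where $\wt H_{R,\pmb{\alpha}}(\gamma)$ is the discrete generalized Pickands constant on the lattice $\gamma\mathbb{Z}^{r_1}\times\gamma\mathbb{Z}^{r_2}$ (after the Pickands rescaling); (3) check that the $o(1)$ is uniform in $k,j$ and in $0<h\le h_0$, using that (B2) holds uniformly and that the cells have diameters and volumes uniformly bounded above and below by (B1), so finitely many ``shapes'' of rescaled cells effectively appear up to controlled perturbation; (4) sum over $j=1,\dots,n_h$ and use $\overbar I_h(\overbar J_k\times\mathcal{M}_{h,2})=h^{-r_1}I_h(J_k\times\mathcal{M}_{h,2})$ exactly as in Lemma~\ref{lemma3.1} to convert back to the unrescaled integral and pick up the $h^{-r_1}$ factor; (5) invoke the standard fact that the discrete Pickands constant $\wt H_{R,\pmb{\alpha}}(\gamma)\to H_{R,\pmb{\alpha}}$ as $\gamma\to 0$ (by the factorization across the two blocks this follows from the one-parameter statement $\wt H_{\alpha_i}(\gamma)\to H_{\alpha_i}$, see Piterbarg~\cite{Piterbarg:1996}), which gives the claimed limiting behavior of $\wt H_{R,\pmb{\alpha}}(\gamma)$ and lets us choose $\gamma_0$ so that the grid approximation is within $\epsilon$ of the continuum value.

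The main obstacle I expect is making the discrete Pickands-type asymptotic genuinely uniform over the family of cells $J_k$ (including the shrunken versions $J_{k,h}^\delta$, $J_{k,h}^{-\delta}$) and over $h\in(0,h_0]$ simultaneously. The cells are not congruent; their images $\wt J_{k,h}$ are only Jordan sets squeezed between balls of radius $(h\ell_1)/2$ and $h\ell_1$ after rescaling, with curvature bounds from positive reach (Niyogi et al.~\cite{Niyogi:2008}), and the matrix $A(\bm{t})$ varies continuously with uniformly bounded eigenvalues. The way to handle this is to show the discrete Pickands limit depends continuously (uniformly) on the local covariance matrix and on the (rescaled) domain in a way that is stable under the small Lipschitz perturbations introduced by the tangent-space projection $J_\rho$ and by the continuity of $D_{\bm{s},h}$ — so that the error is controlled by the same $\epsilon$ uniformly. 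This is essentially a compactness/continuity argument over the (relatively compact) set of possible local structures, combined with the fact that, for $\theta$ large, the number of grid points per cell is large enough that boundary effects are negligible; the quantitative bookkeeping of these two sources of error against a single $\epsilon$ is the technical heart of the lemma.
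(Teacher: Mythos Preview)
Your proposal is correct and follows essentially the same route as the paper: the paper's proof simply says to mimic the proof of Lemma~\ref{lemma3.1}, replacing the continuous-index result (Theorem~\ref{fixedmanifold}) by its discrete-grid analogue (Lemma~\ref{fixedmanifolddis} in the appendix, itself built on Theorem~\ref{Theorem7.1ana}, the lattice version of Piterbarg's Theorem~7.1), with the tangent-space projection $\psi_k$ as the required diffeomorphism. Your identification of the discrete Pickands constant $\wt H_{R,\pmb{\alpha}}(\gamma)$ and its convergence to $H_{R,\pmb{\alpha}}$ (the paper's Lemma~\ref{HGamma}) is exactly the mechanism used; your extended discussion of the uniformity in $k$ and $h$ is more explicit than the paper, which dispatches it in one line by invoking the uniform local stationarity in (B2).
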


%
%
%

\begin{proof}
The proof is similar to that of Lemma~\ref{lemma3.1}. The difference is that, instead of applying Theorem~\ref{fixedmanifold}, we use Lemma~\ref{fixedmanifolddis} in the appendix. Note that in order to apply Lemma~\ref{fixedmanifolddis}, we need to find a diffeomorphism $\psi_k$ from $J_k$ to $\mathbb{R}^r$, for each $k=1,\cdots,m_h$. This diffeomorphism is constructed in the same way as shown at the beginning of the proof of Theorem~\ref{fixedmanifold}.
\end{proof}


\begin{lemma}\label{lemma3.3}
%
%
For $\theta=\theta_{h,z}$ given in (\ref{ThetaExp}) with any fixed $z$, we have that  as $h \to 0,$ 
\begin{align}\label{PhiX}
h^{-r_1}\theta^{2(r_1/\alpha_1+r_2/\alpha_2)}\Psi(\theta)=\frac{e^{-z}}{H_{R,\pmb{\alpha}} I_h(\mathcal{M}_{h})}(1+o(1))=O(1). 
\end{align}
\end{lemma}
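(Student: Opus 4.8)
\textbf{Proof proposal for Lemma~\ref{lemma3.3}.}

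The plan is to substitute the explicit expression for $\beta_h$ from (\ref{BetaExp}) into $\theta_{h,z}$ from (\ref{ThetaExp}), and then expand the quantity $h^{-r_1}\theta^{2(r_1/\alpha_1+r_2/\alpha_2)}\Psi(\theta)$ asymptotically as $h\to 0$, tracking all terms carefully. Recall $\Psi(\theta)=\theta^{-1}\phi(\theta)=(2\pi)^{-1/2}\theta^{-1}e^{-\theta^2/2}$, so the main work is to compute $\theta^2/2$ and $\log\theta$ precisely enough. Write $L=\log(1/h)$ for brevity; then $\theta_{h,z}=\sqrt{2r_1 L}+(2r_1 L)^{-1/2}\bigl[(\kappa-\tfrac12)\log L+\log C_h\bigr]$, where $\kappa=r_1/\alpha_1+r_2/\alpha_2$ and $C_h=\frac{(2r_1)^{\kappa-1/2}}{\sqrt{2\pi}}H_{R,\pmb\alpha}I_h(\mathcal M_h)$, plus the extra contribution $(2r_1 L)^{-1/2}z$ coming from $\theta_{h,z}-\beta_h$.

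First I would square $\theta_{h,z}$: the leading term is $2r_1 L$, the cross term $2\sqrt{2r_1 L}\cdot(2r_1 L)^{-1/2}\bigl[(\kappa-\tfrac12)\log L+\log C_h+z\bigr]=2(\kappa-\tfrac12)\log L+2\log C_h+2z$, and the remaining squared term is $O((\log L)^2/L)=o(1)$. Hence
\begin{align*}
\frac{\theta_{h,z}^2}{2}=r_1 L+(\kappa-\tfrac12)\log L+\log C_h+z+o(1).
\end{align*}
Exponentiating, $e^{-\theta^2/2}=e^{-r_1 L}\,L^{-(\kappa-1/2)}\,C_h^{-1}\,e^{-z}(1+o(1))=h^{r_1}L^{-\kappa+1/2}C_h^{-1}e^{-z}(1+o(1))$. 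Next, $\log\theta_{h,z}=\tfrac12\log(2r_1 L)+o(1)=\tfrac12\log(2r_1)+\tfrac12\log L+o(1)$, and $\theta_{h,z}^{2\kappa}=(2r_1 L)^{\kappa}(1+o(1))=(2r_1)^{\kappa}L^{\kappa}(1+o(1))$. Therefore
\begin{align*}
h^{-r_1}\theta^{2\kappa}\Psi(\theta)
&=h^{-r_1}\,\theta^{2\kappa-1}\,(2\pi)^{-1/2}e^{-\theta^2/2}\\
&=h^{-r_1}(2r_1)^{\kappa-1/2}L^{\kappa-1/2}(2\pi)^{-1/2}\cdot h^{r_1}L^{-\kappa+1/2}C_h^{-1}e^{-z}(1+o(1))\\
&=\frac{(2r_1)^{\kappa-1/2}}{\sqrt{2\pi}}\,C_h^{-1}\,e^{-z}(1+o(1)).
\end{align*}
Substituting $C_h=\frac{(2r_1)^{\kappa-1/2}}{\sqrt{2\pi}}H_{R,\pmb\alpha}I_h(\mathcal M_h)$ collapses the prefactor and yields $\frac{e^{-z}}{H_{R,\pmb\alpha}I_h(\mathcal M_h)}(1+o(1))$, which is the first claimed equality. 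The final $O(1)$ assertion then follows because, by assumption (B1) together with the uniform eigenvalue bounds (\ref{eigenbounds}), the quantity $I_h(\mathcal M_h)=\int_{\mathcal M_h}\|D_{\bm t,h}P_{\bm t}\|_{r_1}\,d\mathcal H_r(\bm t)$ is bounded above and below by positive constants uniformly in $h$, so $H_{R,\pmb\alpha}I_h(\mathcal M_h)$ is bounded away from $0$ and $\infty$.

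The computation itself is routine bookkeeping; the only place requiring care is making sure the $o(1)$ error in $\theta^2/2$ really is $o(1)$ and not merely $O(1)$ — this hinges on the fact that $\log C_h$ stays bounded (again via the uniform two-sided bounds on $I_h(\mathcal M_h)$), so that the squared bracket term is genuinely $O((\log L)^2/L)\to 0$. I would therefore make the uniform boundedness of $I_h(\mathcal M_h)$ explicit early in the argument, since it is used both to control the error term and to deliver the concluding $O(1)$ statement. No other subtlety is anticipated.
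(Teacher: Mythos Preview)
Your proposal is correct and follows essentially the same approach as the paper: the first equality is obtained by the direct substitution and expansion you describe (the paper simply calls this ``a direct calculation using (\ref{ThetaExp})'' without writing it out), and the $O(1)$ conclusion is derived from uniform two-sided bounds on $I_h(\mathcal M_h)$ via (B1) and (\ref{eigenbounds}). The only difference is emphasis: you spell out the asymptotic expansion of $\theta^2/2$ in detail while citing the bound on $I_h(\mathcal M_h)$, whereas the paper does the reverse, giving an explicit eigenvalue argument to show $[\lambda_{\min}(D_{\bm t,h}^TD_{\bm t,h})]^{r_1/2}\le \|D_{\bm t,h}P_{\bm t}\|_{r_1}\le [\lambda_{\max}(D_{\bm t,h}^TD_{\bm t,h})]^{r_1/2}$.
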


\begin{proof}
Observe that the first equality in (\ref{PhiX}) follows from a direct calculation using (\ref{ThetaExp}). 
Next we show (\ref{PhiX}) is bounded. Recall that $\|D_{\bm{t},h} P_{\bm{t}}\|_{r_1}= \det[(P_{\bm{t}}^T D_{\bm{t},h}^T D_{\bm{t},h} P_{\bm{t}})^{1/2}]$ (see (\ref{determine})), where the columns of $P_{\bm{t}}$ are orthonormal and span the tangent space $T_{\bm{t}}\mathcal{M}_h$. Since $D_{\bm{t},h}$ is non-singular, there exists an orthogonal matrix $E_{\bm{t},h}$ such that the columns of $P_{\bm{t}}$ are the eigenvectors of $E_{\bm{t},h}D_{\bm{t},h}$, whose associated eigenvalues are denoted by $\lambda_{\bm{t},1},\cdots,\lambda_{\bm{t},r_1}$. Let $\Lambda_{\bm{t}}=\text{diag}(\lambda_{\bm{t},1},\cdots,\lambda_{\bm{t},r_1})$. Then
\begin{align*}
\|D_{\bm{t},h} P_{\bm{t}}\|_{r_1}= & \det[(P_{\bm{t}}^T D_{\bm{t},h}^T E_{\bm{t},h}^T E_{\bm{t},h} D_{\bm{t},h} P_{\bm{t}})^{1/2}] \\
= & \det[(\Lambda_{\bm{t}} P_{\bm{t}}^T  P_{\bm{t}} \Lambda_{\bm{t}})^{1/2}] \\
=&\prod_{j=1}^{r_1} |\lambda_{\bm{t},j}|.
\end{align*}

The above calculation also shows that $\lambda_{\bm{t},1}^2,\cdots,\lambda_{\bm{t},r_1}^2$ are eigenvalues of $D_{\bm{t},h}^T E_{\bm{t},h}^T E_{\bm{t},h} D_{\bm{t},h}=D_{\bm{t},h}^TD_{\bm{t},h}$. It then follows that
\begin{align*}
[\lambda_{\min}(D_{\bm{t},h}^TD_{\bm{t},h})]^{r_1/2} \leq \|D_{\bm{t},h} P_{\bm{t}}\|_{r_1} \leq [\lambda_{\max}(D_{\bm{t},h}^TD_{\bm{t},h})]^{r_1/2}.
\end{align*}

The left-hand side in (\ref{PhiX}) is bounded because with assumption (B2) we have
\begin{align*}
0 < & \inf_{0<h\leq h_0, \bm{t}\in\mathcal{M}_h} [\lambda_{\min}(D_{\bm{t},h}^T D_{\bm{t},h})]^{r_1/2} \inf_{0<h\leq h_0} \mathcal{H}_{r_1}(\mathcal{M}_h) \\
\leq & \inf_{0<h\leq h_0}I_h(\mathcal{M}_{h}) \leq \sup_{0<h\leq h_0}I_h(\mathcal{M}_{h}) \\
\leq & \sup_{0<h\leq h_0, \bm{t}\in\mathcal{M}_h} [\lambda_{\max}(D_{\bm{t},h}^T D_{\bm{t},h})]^{r_1/2}\sup_{0<h\leq h_0} \mathcal{H}_{r_1}(\mathcal{M}_h) <\infty.
\end{align*}
\end{proof}

Denote $\mathcal{J}_{h}^\delta = \bigcup_{k\leq m_h}J_{k,h}^\delta$. Recall that $\mathcal{M}_h=\mathcal {M}_{h,1}\times \mathcal {M}_{h,2}$. Approximating $\mathcal{M}_h$ by $\mathcal{J}_{h}^\delta \times \mathcal {M}_{h,2}$  leads to the approximation of $\mathbb{Q}_{Z_h}(\theta,\mathcal{M}_h)$ by  $\mathbb{Q}_{Z_h}(\theta,\mathcal{J}_{h}^\delta \times \mathcal {M}_{h,2})$. The volume of $\bigcup_{k\leq m_h}J_{k,h}^{-\delta},$ i.e., the difference between the volumes of $\mathcal{M}$ and $\mathcal{J}_{h}^\delta$, is of the order $O(\delta)$ uniformly in $h$. As the next lemma shows, the order of the difference $\mathbb{Q}_{Z_h}(\theta,\mathcal{M}_h) - \mathbb{Q}_{Z_h}(\theta,\mathcal{J}_{h}^\delta \times \mathcal {M}_{h,2})$ turns out to be of the same order. 

\begin{lemma}\label{lemma3.4}
With $\theta=\theta_{h,z}$ given in (\ref{ThetaExp}), there exists $0<C<\infty$ such that for $\delta$ and $h$ small enough,
\begin{align}\label{G1}
0< \mathbb{P}_{Z_h}(\theta,\mathcal{J}_{h}^\delta \times \mathcal {M}_{h,2})  - \mathbb{P}_{Z_h}(\theta,\mathcal{M}_{h}) \leq C\delta,
\end{align}
and
\begin{align}\label{G1T}
0 < \sum_{k=1}^{m_h}\mathbb{Q}_{Z_h}(\theta, J_{k,h}\times\mathcal {M}_{h,2}) - \sum_{k=1}^{m_h}\mathbb{Q}_{Z_h}(\theta,  J_{k,h}^\delta \times \mathcal {M}_{h,2} )\leq C\delta.
\end{align}
\end{lemma}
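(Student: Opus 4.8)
\textbf{Proof plan for Lemma~\ref{lemma3.4}.} The plan is to observe that both (\ref{G1}) and (\ref{G1T}) reduce to a single uniform estimate,
\[
\sum_{k=1}^{m_h}\mathbb{Q}_{Z_h}\big(\theta,\,J_{k,h}^{-\delta}\times\mathcal{M}_{h,2}\big)\le C\delta\qquad(\delta,h\ \text{small}),
\]
with $C$ independent of $\delta$ and $h$. For the reduction I would use that $\mathcal{M}_{h,1}=\mathcal{J}_h^\delta\cup\mathcal{B}^{h\delta}$ with the disjoint decomposition $\mathcal{B}^{h\delta}=\bigcup_{k\le m_h}J_{k,h}^{-\delta}$, so that $\mathcal{M}_h\setminus(\mathcal{J}_h^\delta\times\mathcal{M}_{h,2})=\mathcal{B}^{h\delta}\times\mathcal{M}_{h,2}$. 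Subadditivity of $\mathbb{Q}_{Z_h}(\theta,\cdot)$ over unions (because $\{\sup_{A\cup B}Z_h>\theta\}\subseteq\{\sup_AZ_h>\theta\}\cup\{\sup_BZ_h>\theta\}$) then gives
\[
0\le\mathbb{Q}_{Z_h}(\theta,\mathcal{M}_h)-\mathbb{Q}_{Z_h}(\theta,\mathcal{J}_h^\delta\times\mathcal{M}_{h,2})\le\mathbb{Q}_{Z_h}(\theta,\mathcal{B}^{h\delta}\times\mathcal{M}_{h,2})\le\sum_{k=1}^{m_h}\mathbb{Q}_{Z_h}(\theta,J_{k,h}^{-\delta}\times\mathcal{M}_{h,2}),
\]
and the left-most difference equals $\mathbb{P}_{Z_h}(\theta,\mathcal{J}_h^\delta\times\mathcal{M}_{h,2})-\mathbb{P}_{Z_h}(\theta,\mathcal{M}_h)$, the quantity in (\ref{G1}). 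Similarly $J_{k,h}=J_{k,h}^\delta\cup J_{k,h}^{-\delta}$ yields $0\le\mathbb{Q}_{Z_h}(\theta,J_{k,h}\times\mathcal{M}_{h,2})-\mathbb{Q}_{Z_h}(\theta,J_{k,h}^\delta\times\mathcal{M}_{h,2})\le\mathbb{Q}_{Z_h}(\theta,J_{k,h}^{-\delta}\times\mathcal{M}_{h,2})$ for each $k$, and summing in $k$ turns (\ref{G1T}) into the same bound.

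To prove the displayed bound, I would apply Lemma~\ref{lemma3.1} with $J_k=J_{k,h}^{-\delta}$ and a fixed $\epsilon$, say $\epsilon=1$: for $h$ small, $\theta=\theta_{h,z}\to\infty$ so $\theta\ge\theta_0$, and then each summand is at most $2h^{-r_1}\theta^{2(r_1/\alpha_1+r_2/\alpha_2)}\Psi(\theta)\,H_{R,\pmb{\alpha}}\,I_h(J_{k,h}^{-\delta}\times\mathcal{M}_{h,2})$. Since $\{J_{k,h}^{-\delta}\}_k$ partitions $\mathcal{B}^{h\delta}$ and $I_h$ is the integral of $\|D_{\bm{t},h}P_{\bm{t}}\|_{r_1}$, summation gives $\sum_k I_h(J_{k,h}^{-\delta}\times\mathcal{M}_{h,2})=I_h(\mathcal{B}^{h\delta}\times\mathcal{M}_{h,2})$. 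By the computation in the proof of Lemma~\ref{lemma3.3}, assumption (B2) bounds $\|D_{\bm{t},h}P_{\bm{t}}\|_{r_1}\le[\lambda_{\max}(D_{\bm{t},h}^TD_{\bm{t},h})]^{r_1/2}$ by a constant uniformly in $\bm{t}$ and $h$, so $I_h(\mathcal{B}^{h\delta}\times\mathcal{M}_{h,2})=O\big(\mathcal{H}_{r_1}(\mathcal{B}^{h\delta})\,\mathcal{H}_{r_2}(\mathcal{M}_{h,2})\big)$; assumption (B1) bounds $\mathcal{H}_{r_2}(\mathcal{M}_{h,2})$ uniformly, and the positive-reach geometric construction recorded just before the lemma gives $\mathcal{H}_{r_1}(\mathcal{B}^{h\delta})=\mathcal{H}_{r_1}(\bigcup_kJ_{k,h}^{-\delta})=O(\delta)$ uniformly in $h$. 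Hence $I_h(\mathcal{B}^{h\delta}\times\mathcal{M}_{h,2})=O(\delta)$ uniformly, and since $h^{-r_1}\theta^{2(r_1/\alpha_1+r_2/\alpha_2)}\Psi(\theta)=O(1)$ by Lemma~\ref{lemma3.3}, multiplying the two estimates yields the displayed $\le C\delta$, which gives the upper bounds in (\ref{G1}) and (\ref{G1T}).

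For the strict left-hand inequalities, I would argue that $\mathcal{J}_h^\delta\times\mathcal{M}_{h,2}$ (resp. $J_{k,h}^\delta\times\mathcal{M}_{h,2}$) is a proper subset of $\mathcal{M}_h$ (resp. of $J_{k,h}\times\mathcal{M}_{h,2}$), with complement of positive $\mathcal{H}_r$-measure since the cells are not too thin and $\delta<\ell_1/2$; strictness then follows from non-degeneracy of the a.s.\ continuous field $Z_h$. Concretely, choose a point $\bm{t}^*$ in the complement whose first-coordinate is separated from the subset (one checks a point with first-coordinate distance $\ge h\delta/2$ from $\mathcal{J}_h^\delta\times\mathcal{M}_{h,2}$ exists inside $\mathcal{B}^{h\delta}\times\mathcal{M}_{h,2}$), condition on $Z_h(\bm{t}^*)$, use (B3) to bound the conditional mean of $Z_h$ over the subset by $\eta\,Z_h(\bm{t}^*)$ with $\eta<1$, and note that the conditional fluctuation has supremum over the subset strictly below $(1-\eta)\theta$ with positive probability (a concentration bound, valid for $\theta=\theta_{h,z}$ large); this makes the relevant difference event have positive probability, and for (\ref{G1T}) a single such index $k$ suffices. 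I expect the main obstacle to be essentially bookkeeping of uniformity — applying Lemma~\ref{lemma3.1} to all $O(h^{-r_1})$ cells with one $\epsilon$, and ensuring the tube-volume estimate $\mathcal{H}_{r_1}(\mathcal{B}^{h\delta})=O(\delta)$ together with the $O(1)$ of Lemma~\ref{lemma3.3} are uniform in $h$ — all of which is already supplied by the preceding lemmas and the positive-reach construction; the strict positivity is the only genuinely separate point and is a soft Gaussian non-degeneracy argument.
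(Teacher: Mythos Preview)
Your proposal is correct and follows essentially the same route as the paper: reduce both (\ref{G1}) and (\ref{G1T}) to the single bound $\sum_{k}\mathbb{Q}_{Z_h}(\theta,J_{k,h}^{-\delta}\times\mathcal{M}_{h,2})\le C\delta$, then combine Lemma~\ref{lemma3.1} (applied uniformly to the $J_{k,h}^{-\delta}$), the eigenvalue bound from (B2) on $\|D_{\bm{t},h}P_{\bm{t}}\|_{r_1}$, the volume estimate $\mathcal{H}_{r_1}(\mathcal{B}^{h\delta})=O(\delta)$, and the $O(1)$ from Lemma~\ref{lemma3.3}. The only cosmetic difference is that the paper bounds each $\mathcal{H}_r(J_{k,h}^{-\delta}\times\mathcal{M}_{h,2})\le C_2\delta h^{r_1}$ and multiplies by $m_h=O(h^{-r_1})$, whereas you sum $I_h$ directly over the partition of $\mathcal{B}^{h\delta}$; and the paper simply asserts the strict lower bounds without your separate non-degeneracy argument.
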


\begin{proof}
Using (\ref{eigenbounds}), we have that 
\begin{align}\label{DMbound}
\sup_{0<h\leq h_0,\bm{t}\in\mathcal{M}_h} \|D_{\bm{t},h} M_{\bm{t}} \|_r \leq  C_1:= \sup_{0<h\leq h_0, \bm{t}\in\mathcal{M}_h} [\lambda_{\max}(D_{\bm{t},h}^T D_{\bm{t},h})]^{r_1/2} <\infty.
\end{align}

Also note that for all $h\in(0,h_0]$, there exists $0<C_2<\infty$ such that $\max_{1\leq k \leq m_h}\mathcal{H}_{r}(J_{k,h}^{-\delta} \times \mathcal{M}_h) \leq C_2 \delta h^{r_1}$. Our construction of the partition of the $\mathcal{M}_h$ guarantees that there exists $0<C_3<\infty$ such that $m_h\leq C_3h^{-r_1}$. Therefore
\begin{align}\label{sumbound}
\sum_{k=1}^{m_h}I_h(J_{k,h}^{-\delta} \times \mathcal {M}_{h,2}) \leq m_h \sup_{0<h\leq h_0,\bm{t}\in\mathcal{M}_h} \|D_{\bm{t},h} M_{\bm{t}} \|_r \max_{1\leq k \leq m_h}\mathcal{H}_{r}(J_{k,h}^{-\delta} \times \mathcal{M}_h) \leq C_1C_2C_3 \delta. 
\end{align}
Using Lemma~\ref{lemma3.1}, for any $\epsilon>0$, we have for $h$ small enough that

\begin{align*}
0&\leq \mathbb{Q}_{Z_h}(\theta_{h,z},\mathcal{M}_{h})-\mathbb{Q}_{Z_h}(\theta_{h,z},\mathcal{J}_{h}^\delta \times \mathcal {M}_{h,2})\\
%
%
&\leq \sum_{k=1}^{m_h}\mathbb{Q}_{Z_h}(\theta_{h,z}, J_{k,h}^{-\delta}\times\mathcal {M}_{h,2})\\
&\leq (1+\epsilon)h^{-r_1}\,H_{R,\pmb{\alpha}} \theta_{h,z}^{2(r_1/\alpha_1+r_2/\alpha_2)}\Psi(\theta) \sum_{k=1}^{m_h}I_h(J_{k,h}^{-\delta} \times \mathcal {M}_{h,2}).
\end{align*}
Then (\ref{G1T}) follows from Lemma~\ref{lemma3.3} and (\ref{sumbound}). Also (\ref{G1}) holds because
\begin{align*}
0< \mathbb{P}_{Z_h}(\theta,\mathcal{J}_{h}^\delta \times \mathcal {M}_{h,2})  - \mathbb{P}_{Z_h}(\theta,\mathcal{M}_{h}) \leq \sum_{k=1}^{m_h}\mathbb{Q}_{Z_h}(\theta_{h,z}, J_{k,h}^{-\delta}\times\mathcal {M}_{h,2}).
\end{align*}
%
%
%
%
%
\end{proof}

With $\Gamma_{h,\gamma,\theta}$ given in (\ref{fullgrid}), $(\mathcal{J}_{h}^\delta \times \mathcal {M}_{h,2})\cap\Gamma_{h,\gamma,\theta}$ is a grid over $\mathcal{J}_{h}^\delta \times \mathcal {M}_{h,2}$. Next we show that excursion probabilities over these two sets are close, by choosing both $h$ and the grid size to be sufficiently small. 
 
\begin{lemma}\label{lemma3.5}
With $\theta=\theta_{h,z}$ given in (\ref{ThetaExp}), we have that 
\begin{align}\label{G2}
\mathbb{P}_{Z_h}(\theta,  \mathcal{J}_{h}^\delta \times \mathcal {M}_{h,2}) = \mathbb{P}_{Z_h}(\theta,  (\mathcal{J}_{h}^\delta \times \mathcal {M}_{h,2}) \cap\Gamma_{h,\gamma,\theta})+ o(1)
\end{align}
and
\begin{align}\label{G2T}
\sum_{k=1}^{m_h}\mathbb{Q}_{Z_h}(\theta,  J_{k,h}^\delta \times \mathcal {M}_{h,2})=\sum_{k=1}^{m_h}\mathbb{Q}_{Z_h}(\theta,  (J_{k,h}^\delta \times \mathcal {M}_{h,2}) \cap\Gamma_{h,\gamma,\theta})+ o(1),
\end{align}
as $\gamma,h\to0.$\\
\end{lemma}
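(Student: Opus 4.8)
The plan is to deduce both identities from the cell\-wise asymptotics already supplied by Lemmas~\ref{lemma3.1}--\ref{lemma3.3} together with a Bonferroni\-type containment. For $1\le k\le m_h$ set
\begin{align*}
D_{k,h}(\gamma)=\mathbb{Q}_{Z_h}(\theta,J_{k,h}^\delta\times\mathcal {M}_{h,2})-\mathbb{Q}_{Z_h}\big(\theta,(J_{k,h}^\delta\times\mathcal {M}_{h,2})\cap\Gamma_{h,\gamma,\theta}\big)\ge 0,
\end{align*}
the inequality holding because the grid is a subset of the cell. Since $\mathcal{J}_{h}^\delta=\bigcup_k J_{k,h}^\delta$ is a partition, the event $\{\sup_{\mathcal{J}_{h}^\delta\times\mathcal {M}_{h,2}}Z_h>\theta\}\cap\{\max_{(\mathcal{J}_{h}^\delta\times\mathcal {M}_{h,2})\cap\Gamma_{h,\gamma,\theta}}Z_h\le\theta\}$ is contained in the union over $k$ of the corresponding cell\-wise events, whence
\begin{align*}
0\le \mathbb{Q}_{Z_h}(\theta,\mathcal{J}_{h}^\delta\times\mathcal {M}_{h,2})-\mathbb{Q}_{Z_h}\big(\theta,(\mathcal{J}_{h}^\delta\times\mathcal {M}_{h,2})\cap\Gamma_{h,\gamma,\theta}\big)\le\sum_{k=1}^{m_h}D_{k,h}(\gamma).
\end{align*}
Now (\ref{G2T}) is literally the statement $\sum_k D_{k,h}(\gamma)=o(1)$, and since $\mathbb{P}_{Z_h}(\theta,\cdot)=1-\mathbb{Q}_{Z_h}(\theta,\cdot)$ the two sides of (\ref{G2}) differ, up to sign, by exactly the quantity in the middle of the last display; so both follow once we show $\sum_{k=1}^{m_h}D_{k,h}(\gamma)\to0$ as $\gamma,h\to0$.

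To bound $\sum_k D_{k,h}(\gamma)$, fix $\epsilon>0$. Applying Lemma~\ref{lemma3.1} to $\mathbb{Q}_{Z_h}(\theta,J_{k,h}^\delta\times\mathcal {M}_{h,2})$ and Lemma~\ref{lemma3.2} to $\mathbb{Q}_{Z_h}(\theta,(J_{k,h}^\delta\times\mathcal {M}_{h,2})\cap\Gamma_{h,\gamma,\theta})$ --- both admissible since $J_{k,h}^\delta$ is one of the allowed cells --- the two expansions share the factor $h^{-r_1}\theta^{2(r_1/\alpha_1+r_2/\alpha_2)}\Psi(\theta)\,I_h(J_{k,h}^\delta\times\mathcal {M}_{h,2})$ and differ only through $H_{R,\pmb{\alpha}}$ versus $\wt H_{R,\pmb{\alpha}}(\gamma)$ and through relative errors $\epsilon_{k,h},\epsilon_{k,h}'$ of modulus at most $\epsilon$ (valid once $\gamma\le\gamma_0(\epsilon)$ and $\theta=\theta_{h,z}\ge\theta_0(\epsilon)$, i.e. $h$ below some $h_1(\epsilon)$). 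Hence, uniformly in $k$,
\begin{align*}
D_{k,h}(\gamma)\le\Big(\big|H_{R,\pmb{\alpha}}-\wt H_{R,\pmb{\alpha}}(\gamma)\big|+\epsilon\big(H_{R,\pmb{\alpha}}+\wt H_{R,\pmb{\alpha}}(\gamma)\big)\Big)h^{-r_1}\theta^{2(r_1/\alpha_1+r_2/\alpha_2)}\Psi(\theta)\,I_h(J_{k,h}^\delta\times\mathcal {M}_{h,2}).
\end{align*}
Summing over $k$ and using that the $J_{k,h}^\delta$ are disjoint subsets of $\mathcal {M}_{h,1}$, so that $\sum_k I_h(J_{k,h}^\delta\times\mathcal {M}_{h,2})\le I_h(\mathcal{M}_h)$, Lemma~\ref{lemma3.3} keeps $h^{-r_1}\theta^{2(r_1/\alpha_1+r_2/\alpha_2)}\Psi(\theta)\,I_h(\mathcal{M}_h)$ bounded (it converges to $e^{-z}/H_{R,\pmb{\alpha}}$), so $\sum_k D_{k,h}(\gamma)\le C\big(|H_{R,\pmb{\alpha}}-\wt H_{R,\pmb{\alpha}}(\gamma)|+\epsilon\big)$ for $h,\gamma$ small, with $C$ independent of $\gamma,h,\epsilon$. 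Letting $h\to0$, then $\gamma\to0$ (so $\wt H_{R,\pmb{\alpha}}(\gamma)\to H_{R,\pmb{\alpha}}$ by Lemma~\ref{lemma3.2}), and recalling $\epsilon$ was arbitrary, yields $\sum_k D_{k,h}(\gamma)=o(1)$, completing the proof.

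The only genuinely delicate point is the bookkeeping of the three parameters $\epsilon,\gamma,h$. The relative errors $\epsilon_{k,h},\epsilon_{k,h}'$ in Lemmas~\ref{lemma3.1}--\ref{lemma3.2} are controlled only once $1/h$ exceeds a threshold depending on $\epsilon$, and it is essential that this threshold --- and the function $\wt H_{R,\pmb{\alpha}}(\gamma)$ --- be uniform over the $m_h=O(h^{-r_1})$ cells, since we are summing $m_h$ contributions against the factor $h^{-r_1}\theta^{2(r_1/\alpha_1+r_2/\alpha_2)}\Psi(\theta)\,I_h(\mathcal{M}_h)$ that Lemma~\ref{lemma3.3} only keeps bounded, not small; any nonuniformity would be fatal. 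This uniformity is exactly what assumption (B2) and the formulations of Lemmas~\ref{lemma3.1} and~\ref{lemma3.2} provide --- the genuine discretization comparison (continuous versus grid maxima of a locally stationary Gaussian field) has already been absorbed into the constant $\wt H_{R,\pmb{\alpha}}(\gamma)$ --- so no new probabilistic estimate is needed here; one merely has to take the limits in the order $h\to0$, then $\gamma\to0$, then $\epsilon\downarrow0$.
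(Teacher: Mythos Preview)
Your argument is correct and is essentially the same as the paper's: both bound the cellwise discrepancy $D_{k,h}(\gamma)$ by combining Lemmas~\ref{lemma3.1} and~\ref{lemma3.2}, sum over $k$ using $\sum_k I_h(J_{k,h}^\delta\times\mathcal{M}_{h,2})\le I_h(\mathcal{M}_h)$, and invoke Lemma~\ref{lemma3.3} to keep the prefactor $h^{-r_1}\theta^{2(r_1/\alpha_1+r_2/\alpha_2)}\Psi(\theta)I_h(\mathcal{M}_h)$ bounded. The only cosmetic difference is that the paper absorbs $|H_{R,\pmb{\alpha}}-\wt H_{R,\pmb{\alpha}}(\gamma)|$ into a single $\epsilon$ up front (by first taking $\gamma$ small), whereas you track that term separately and send $\gamma\to0$ at the end; either bookkeeping is fine.
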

 
 \begin{proof}
Lemmas~\ref{lemma3.1} and \ref{lemma3.2} imply that for any $\epsilon>0$, there exist $\gamma_0>0$ and $\theta_0>0$ such that for all $\gamma\leq\gamma_0$ and $\theta\geq\theta_0$,
\begin{align*}
0&\leq \mathbb{Q}_{Z_h}(\theta,J_{k,h}^\delta\times \mathcal {M}_{h,2})-\mathbb{Q}_{Z_h}(\theta, (J_{k,h}^\delta \times \mathcal {M}_{h,2}) \cap\Gamma_{h,\gamma,\theta} )\\
&\leq \sum_{j=1}^{n_h} \sum_{i=1}^{N_h}\Big[\mathbb{Q}_{Z_h}(\theta,S_i^h \times U_j)-\mathbb{Q}_{Z_h}(\theta,(S_i^h \times U_j)\cap\Gamma_{h,\gamma,\theta})\Big]\\
&\leq \epsilon h^{-r_1}\;\theta^{2(r_1/\alpha_1+r_2/\alpha_2)}\Psi(\theta)H_{R,\pmb{\alpha}} I_h(J_{k,h}^\delta \times \mathcal {M}_{h,2}).
\end{align*}
%
%
%
As a result, 
{\allowdisplaybreaks
\begin{align*}
0&\leq \mathbb{Q}_{Z_h}(\theta,  \mathcal{J}_{h}^\delta \times \mathcal {M}_{h,2})-\mathbb{Q}_{Z_h}(\theta,  (\mathcal{J}_{h}^\delta \times \mathcal {M}_{h,2}) \cap\Gamma_{h,\gamma,\theta})\nonumber\\
&\leq\sum_{k=1}^{m_h}\Big[\mathbb{Q}_{Z_h}(\theta,  J_{k,h}^\delta \times \mathcal {M}_{h,2})-\mathbb{Q}_{Z_h}(\theta,  (J_{k,h}^\delta \times \mathcal {M}_{h,2}) \cap\Gamma_{h,\gamma,\theta})\Big]\nonumber\\
%
%
%
&\leq\epsilon h^{-r_1}\,\theta^{2(r_1/\alpha_1+r_2/\alpha_2)}\Psi(\theta)H_{R,\pmb{\alpha}}  I_h\big(\mathcal{J}_{h}^\delta \times \mathcal {M}_{h,2} \big)\nonumber\\
&\leq\epsilon h^{-r_1}\,\theta^{2(r_1/\alpha_1+r_2/\alpha_2)}\Psi(\theta)H_{R,\pmb{\alpha}}  I_h(\mathcal{M}_{h}).
\end{align*}
}
Then (\ref{G2}) and (\ref{G2T}) immediately follows from (\ref{PhiX}). 
%
%
%
%
\end{proof}

Recall that $(\mathcal{J}_{h}^\delta \times \mathcal {M}_{h,2})\cap \Gamma_{h,\gamma,\theta}$ gives a set of dense grid points in $\mathcal{J}_{h}^\delta \times \mathcal {M}_{h,2}$. 
For any $1\leq k\leq m_h$, denote the set $T_k^{h,\gamma,\theta}=(J_{k,h}^\delta \times \mathcal {M}_{h,2}) \cap\Gamma_{h,\gamma,\theta}.$ Define a probability measure $\wt{\mathbb{P}}$ such that under $\wt{\mathbb{P}}$ the vectors $(Z_h(\bm{t}):\; \bm{t}\in T_k^{h,\gamma,\theta})$ and $(Z_h(\bm{t}^\prime):\; \bm{t}^\prime\in T_{k^\prime}^{h,\gamma,\theta})$ are independent for $k\neq k^\prime$. In other words, $\wt{\mathbb{P}}_{Z_h}(\theta,  (\mathcal{J}_{h}^\delta \times \mathcal {M}_{h,2}) \cap\Gamma_{h,\gamma,\theta}) = \prod_{k\leq m_h} \mathbb{P}_{Z_h}(\theta,  (J_{k,h}^\delta \times \mathcal {M}_{h,2}) \cap\Gamma_{h,\gamma,\theta}).$ As the next lemma shows, the probability $\mathbb{P}_{Z_h}(\theta,  (\mathcal{J}_{h}^\delta \times \mathcal {M}_{h,2}) \cap\Gamma_{h,\gamma,\theta})$ can be approximated by using the probability measure $\wt{\mathbb{P}}$, if $\delta$ and $\gamma$ are small.\\[5pt] 

\begin{lemma}\label{lemma3.6}
For $\delta>0$ fixed and small enough, there exists $\gamma=\gamma(h)\rightarrow0$ as $h \to 0,$ such that with $\theta=\theta_{h,z}$ given in (\ref{ThetaExp}), we have
\begin{align}\label{G3}
\mathbb{P}_{Z_h}(\theta,  (\mathcal{J}_{h}^\delta \times \mathcal {M}_{h,2}) \cap\Gamma_{h,\gamma,\theta}) =\prod_{k\leq m_h} \mathbb{P}_{Z_h}(\theta,  (J_{k,h}^\delta \times \mathcal {M}_{h,2}) \cap\Gamma_{h,\gamma,\theta})+o(1).
\end{align}
\end{lemma}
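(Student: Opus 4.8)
\textbf{Proof proposal for Lemma~\ref{lemma3.6}.}

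The plan is to reduce the claim to a comparison between the true joint distribution of the Gaussian vector $(Z_h(\bm{t}):\bm{t}\in(\mathcal{J}_{h}^\delta\times\mathcal{M}_{h,2})\cap\Gamma_{h,\gamma,\theta})$ and the distribution $\wt{\mathbb{P}}$ under which the blocks indexed by distinct Voronoi cells $J_{k,h}^\delta$ are made mutually independent, and then to control this comparison by a Normal comparison (Berman/Slepian-type) inequality. First I would invoke the standard normal comparison lemma (e.g. Theorem 4.2.1 in Leadbetter--Lindgren--Rootz\'en, or Lemma 4.1/Theorem D.3 in Piterbarg~\cite{Piterbarg:1996}): the difference between the two joint exceedance probabilities at level $\theta$ is bounded by a constant times
\[
\sum_{\bm{t}\neq\bm{t}'}\bigl|\bar r_h(\bm{t},\bm{t}')-\wt r_h(\bm{t},\bm{t}')\bigr|\,
\frac{1}{\sqrt{1-\rho^2(\bm{t},\bm{t}')}}\,
\exp\!\left(-\frac{\theta^2}{1+|\bar r_h(\bm{t},\bm{t}')|}\right),
\]
where the sum runs over pairs of grid points, $\rho(\bm{t},\bm{t}')=\max(|\bar r_h|,|\wt r_h|)$, and where $\wt r_h$ agrees with $\bar r_h$ within a block and vanishes across blocks. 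Hence only the across-block pairs contribute, i.e. pairs $(\bm{t},\bm{t}')$ lying in $J_{k,h}^\delta\times\mathcal{M}_{h,2}$ and $J_{k',h}^\delta\times\mathcal{M}_{h,2}$ with $k\neq k'$.

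The key geometric input is the separation built in part (ii) of the construction: distinct trimmed cells $J_{k,h}^\delta$ are separated by $\mathcal{B}^{h\delta}$, so any two across-block grid points have first-coordinate separation $\|\bm{t}^{(1)}-\bm{t}'^{(1)}\|\geq h\delta$. After undoing the rescaling $\xi_h$ (passing to $\overbar Z_h$ on $\overbar{\mathcal{M}}_h$), this becomes a separation of at least $\delta$ in the rescaled first coordinate, uniformly in $h$. I would then split the across-block sum according to the rescaled separation $x=\|\bm{t}^{(1)}-\bm{t}'^{(1)}\|\geq\delta$ into dyadic (or geometric) shells. On each shell, assumption (B3) gives a uniform bound $|\bar r_h|\leq\eta<1$ with $\eta=\eta(\delta)$, which makes the exponential factor at most $\exp(-\theta^2/(1+\eta))$; combined with the polynomial number of grid points in each cell, $N_h^{(1)}N_h^{(2)}=O(\theta^{2(r_1/\alpha_1+r_2/\alpha_2)}h^{-r_1}\gamma^{-r_1-r_2})$ (from \eqref{nustar}, \eqref{nhdeltastar}), and $m_h=O(h^{-r_1})$ cells, the total across-block contribution is bounded by a quantity of order
\[
h^{-2r_1}\,\theta^{4(r_1/\alpha_1+r_2/\alpha_2)}\,\gamma^{-2(r_1+r_2)}\,
Q(\delta)\,\exp\!\left(-\frac{\theta^2}{1+\eta}\right),
\]
possibly improved on far shells by assumption (B4) to extract an extra decaying factor $v(x)$ that kills the $\gamma^{-2(r_1+r_2)}$ and the polynomial-in-$\log(1/h)$ factors. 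Plugging $\theta=\theta_{h,z}$ from \eqref{ThetaExp}, for which $\theta^2\sim 2r_1\log(1/h)$, the exponential $\exp(-\theta^2/(1+\eta))$ behaves like $h^{2r_1/(1+\eta)}$, which dominates the $h^{-2r_1}$ prefactor since $2r_1/(1+\eta)>2r_1$... wait, that is the wrong direction, so one must instead use the Berman-type refinement in (B4): split at a moderate threshold $x_h$ (e.g. $x_h=(\log(1/h))^{c}$), bound the near shells $\delta\leq x\leq x_h$ using that $\mathbb{P}_{Z_h}$ already satisfies Lemma~\ref{lemma3.5} so the doubly-counted near-diagonal contribution is $o(1)$ by the same mass argument used in Lemmas~\ref{lemma3.4}--\ref{lemma3.5}, and bound the far shells $x>x_h$ using that $Q(x)\exp(-\theta^2/(1+\eta))$ times the grid count is controlled by $v(x_h)\to0$ via \eqref{SupGauss2}. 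Finally, the freedom to let $\gamma=\gamma(h)\to0$ slowly enough that $\gamma^{-(r_1+r_2)}$ is absorbed by the gain from $v$ is exactly what the condition ``$v(x)x^q\to\infty$ for all $q>0$'' in (B4) provides.

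\textbf{Main obstacle.} The delicate point is the bookkeeping of the across-block sum: one must show that the product of the combinatorial factor (number of cell pairs times number of grid points per cell, which blows up polynomially in $1/h$, in $\log(1/h)$, and in $1/\gamma$) against the Gaussian exponential (which is polynomially small in $h$ but only marginally so, since $\eta$ is close to $1$ when cells are only $h\delta$-separated in the original scale) still tends to zero, \emph{and} that the same holds uniformly as $\gamma\to0$. Resolving this requires carefully separating the ``short-range'' pairs (handled by the near-diagonal mass estimates of the preceding lemmas, where the relevant exponential is $\exp(-\theta^2)$ not $\exp(-\theta^2/(1+\eta))$, because there the comparison is really with $\bar r_h$ itself near its supremum $\eta<1$) from the ``long-range'' pairs (handled by the Berman-type decay in (B4)); getting the thresholds $x_h$, $\gamma(h)$, and $\delta$ to cooperate is the real work, and is the analogue of the classical verification of Berman's condition for stationary Gaussian sequences, here complicated by the manifold geometry and the rescaling.
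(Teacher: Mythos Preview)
Your overall architecture is right and matches the paper: apply Berman's normal comparison lemma so that the difference is bounded by $\sum_{k\neq k'}\sum_{\bm{t},\bm{t}'}\zeta_h(\bm{t},\bm{t}')$ with $\zeta_h(\bm{t},\bm{t}')\asymp|r_h(\bm{t},\bm{t}')|\exp\big(-\theta^2/(1+|r_h(\bm{t},\bm{t}')|)\big)$, observe that across-block pairs have rescaled first-coordinate separation at least $2\delta$, use (B3) to get $|r_h|\le\eta<1$ uniformly, then split into near and far pairs with (B4) feeding the far part. The gap is in how you treat each half of the split; both of your proposed bounds fail for the same arithmetic reason you already noticed in your ``wait'' aside.

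\emph{Near part.} You suggest handling the near shells $\delta\le x\le x_h$ by the ``mass arguments'' of Lemmas~\ref{lemma3.4}--\ref{lemma3.5}, claiming the relevant exponential there is $\exp(-\theta^2)$ rather than $\exp(-\theta^2/(1+\eta))$. This is not correct: for an across-block pair at moderate separation the only available bound on the covariance is $|r_h|\le\eta$, so the comparison-lemma exponential is stuck at $\exp(-\theta^2/(1+\eta))\sim h^{2r_1/(1+\eta)}$, and Lemmas~\ref{lemma3.4}--\ref{lemma3.5} concern excursion probabilities, not the comparison sum. The paper's remedy is combinatorial rather than analytic: place the split at $h(N_{h,\delta}^{(1)})^{\omega/r_1}\gamma\theta^{-2/\alpha_1}$ in the original first coordinate, i.e.\ at $(N_{h,\delta}^{(1)})^{\omega/r_1}$ grid steps, so that the \emph{number} of near pairs is only $O\big((N_{h,\delta}^{(1)})^{1+\omega}(N_h^{(2)})^2\big)$ instead of the full square. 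Since $N_{h,\delta}^{(1)}\asymp h^{-r_1}\gamma^{-r_1}\theta^{2r_1/\alpha_1}$, the near sum is then of order $h^{-r_1(1+\omega)}\cdot h^{2r_1/(1+\eta)}$ in the $h$-variable, and choosing any $0<\omega<\tfrac{2}{1+\eta}-1$ makes this $o(1)$.

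\emph{Far part.} You bound the far contribution by ``$Q(x)\exp(-\theta^2/(1+\eta))$ times the grid count''. But the full grid count is $\asymp h^{-2r_1}$ in the first coordinate while $\exp(-\theta^2/(1+\eta))\sim h^{2r_1/(1+\eta)}$, so the power of $h$ is negative regardless of how small $Q(x)$ is. The whole point of the far regime is that there $|r_h|$ is \emph{itself} small: use $\exp\big(-\theta^2/(1+|r_h|)\big)\le\exp\big(-\theta^2(1-|r_h|)\big)=\exp(-\theta^2)\exp(|r_h|\theta^2)$. Assumption~(B4) gives $|r_h|\le v(x)\big/(\log x)^{2(r_1/\alpha_1+r_2/\alpha_2)}$ at rescaled separation $x$, and since $\theta^2\sim 2r_1\log(1/h)$ the factor $\exp(|r_h|\theta^2)$ stays bounded while $\exp(-\theta^2)\sim h^{2r_1}$ exactly cancels the $h^{-2r_1}$ from the count. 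What remains is of order $|r_h|\,\theta^{4(r_1/\alpha_1+r_2/\alpha_2)}\gamma^{-2(r_1+r_2)}$, which is killed by $v$. This is precisely where the explicit choice $\gamma(h)=[v(1/h)]^{1/(3(r_1+r_2))}$ enters: it drives $\gamma\to0$ while $\gamma^{-2(r_1+r_2)}=v(1/h)^{-2/3}$ is still dominated by the residual power of $v$.
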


\begin{proof}
Denote $\bm{t}=(\bm{t}_{(1)}^T,\bm{t}_{(2)}^T)^T$ and $\bm{t}^\prime=(\bm{t}_{(1)}^{\prime T},\bm{t}_{(2)}^{\prime T})^T$, where $\bm{t}_{(1)},\bm{t}_{(1)}^\prime\in\mathbb{R}^{n_1}$ and $\bm{t}_{(2)},\bm{t}_{(2)}^\prime\in\mathbb{R}^{n_2}$. For $\bm{t}\in T_k^{h,\gamma,\theta}$ and $\bm{t}^\prime\in T_{k^\prime}^{h,\gamma,\theta}$ with $k\neq k^\prime$, we have $\bm{t}_{(1)}\in J_{k,h}^\delta$ and $\bm{t}_{(1)}^\prime\in J_{k^\prime,m_h}^\delta,$ and hence for all $0<h\leq h_0$, we have 
\begin{align*}
\|\xi_h^{-1}(\bm{t})-\xi_h^{-1}(\bm{t}^\prime)\| \geq \|(\bm{t}_{(1)}-\bm{t}_{(1)}^\prime)/h\|\geq (2h\delta)/h = 2\delta >0.
\end{align*}
%
Let $r_h(\bm{t}_1, \bm{t}_2)$ be the covariance between $Z_h(\bm{t}_1)$ and $Z_h(\bm{t}_2)$, for $\bm{t}_1,\bm{t}_2\in\mathcal{M}_h$. Then assumption (B3) implies that that there exists $\eta =\eta(\delta)> 0,$ such that
\begin{align}\label{RijBound}
\sup_{0<h\leq h_0}\sup_{k\neq k^\prime}\sup_{\bm{t}\in T_k^{h,\gamma,\theta}} \sup_{\bm{t}^\prime\in T_{k^\prime}^{h,\gamma,\theta}}|r_h(\bm{t},\bm{t}^\prime)|<\eta < 1.
\end{align}
%
%
By Lemma 4.1 of Berman \cite{Berman:1971} (aslo see Lemma A4 of Bickel and Rosenblatt \cite{Bickel:1973a}), we have 
\begin{align}\label{TripleSum}
&\big|\mathbb{P}_{Z_h}(\theta,  (\mathcal{J}_{h}^\delta \times \mathcal {M}_{h,2}) \cap\Gamma_{h,\gamma,\theta}) - \wt{\mathbb{P}}_{Z_h}(\theta,  (\mathcal{J}_{h}^\delta \times \mathcal {M}_{h,2}) \cap\Gamma_{h,\gamma,\theta})\big| \nonumber\\
%
%
\leq&8\sum\limits_{1\leq k\neq k^\prime\leq m_h}\sum_{\bm{t}\in T_k^{h,\gamma,\theta}}\sum_{\bm{t}^\prime\in T_{k^\prime}^{h,\gamma,\theta}} \int_0^{|r_h(\bm{t},\bm{t}^\prime)|}\frac{1}{2\pi(1-\lambda^2)^{1/2}}\exp{\bigg(-\frac{\theta^2}{1+\lambda}\bigg)}d\lambda\nonumber\\
%
%
\leq &\sum\limits_{1\leq k\neq k^\prime\leq m_h}\sum_{\bm{t}\in T_k^{h,\gamma,\theta}}\sum_{\bm{t}^\prime\in T_{k^\prime}^{h,\gamma,\theta}} \zeta_h(\bm{t},\bm{t}^\prime),
\end{align}
where 
\begin{align*}
\zeta_h(\bm{t},\bm{t}^\prime) = \frac{4|r_h(\bm{t},\bm{t}^\prime)|}{\pi(1-\eta^2)^{1/2}}\exp{\bigg(-\frac{\theta^2}{1+|r_h(\bm{t},\bm{t}^\prime)|}\bigg)}.
\end{align*}
%
%
%
We take $\gamma = [v(h^{-1})]^{(1/(3r_1+3r_2))}$. Let $\omega$ be such that $0<\omega<\frac{2}{(1+\eta)}-1,$ and define
\begin{align*}
\mathcal{G}_{h,\gamma,\theta}^{(1)}&=\{(\bm{t},\bm{t}^\prime)\in T_k^{h,\gamma,\theta} \times T_{k^\prime}^{h,\gamma,\theta}:\; \|\bm{t}_{(1)}-\bm{t}_{(1)}^\prime\| < h(N_{h,\delta}^{(1)})^{\omega/r_1}\gamma \theta^{-2/\alpha_1}, 1\leq k\neq k^\prime\leq m_h\},\\
\mathcal{G}_{h,\gamma,\theta}^{(2)}&=\{(\bm{t},\bm{t}^\prime)\in T_k^{h,\gamma,\theta} \times T_{k^\prime}^{h,\gamma,\theta}:\; \|\bm{t}_{(1)}-\bm{t}_{(1)}^\prime\|\geq h(N_{h,\delta}^{(1)})^{\omega/r_1}\gamma \theta^{-2/\alpha_1}, 1\leq k\neq k^\prime\leq m_h\},
\end{align*}
%
where $N_{h,\delta}^{(1)}$ is given in (\ref{nhdeltastar}). Then the triple sum on the right-hand side of (\ref{TripleSum}) can be written as
\begin{align}\label{splitup}
\sum_{(\bm{t},\bm{t}^\prime)\in \mathcal{G}_{h,\gamma,\theta}^{(1)}} \zeta_h(\bm{t},\bm{t}^\prime) + \sum_{(\bm{t},\bm{t}^\prime)\in \mathcal{G}_{h,\gamma,\theta}^{(2)}} \zeta_h(\bm{t},\bm{t}^\prime).
\end{align}
Note that the cardinality of $\mathcal{G}_{h,\gamma,\theta}^{(1)}$ is of the order $O((N_{h,\delta}^{(1)})^{\omega+1} (N_{h}^{(2)})^2)$, where $N_{h}^{(2)}$ is given in (\ref{nustar}). Hence for the first sum in (\ref{splitup}) we have
\begin{align}\label{part1sum}
\sum_{(\bm{t},\bm{t}^\prime)\in \mathcal{G}_{h,\gamma,\theta}^{(1)}} \zeta_h(\bm{t},\bm{t}^\prime)=&O\bigg((N_{h,\delta}^{(1)})^{\omega+1}(N_{h}^{(2)})^2\exp\bigg\{-\frac{\theta^2}{1+\eta}\bigg\}\bigg) \nonumber\\
=&O\bigg(\bigg(\frac{\theta^{2r_1/\alpha_1}}{h^{r_1}\gamma^{r_1}}\bigg)^{1+\omega}    \frac{\theta^{4r_2/\alpha_2}}{\gamma^{2r_2}}   \exp\bigg\{-\frac{\theta^2}{1+\eta}\bigg\}\bigg) \nonumber\\
=&O\bigg(\bigg(\frac{(\log\frac{1}{h})^{r_1/\alpha_1 +2r_2/[\alpha_2(1+\omega)]}}{h^{r_1}\gamma^{r_1+2r_2/(1+\omega)}}\bigg)^{1+\omega}\exp\bigg\{-\frac{2r_1\log\tfrac{1}{h}}{1+\eta}\bigg\}\bigg) \nonumber\\
=&O\bigg(h^{\frac{2r_1}{1+\eta}-r_1(1+\omega)}\Big(\log\tfrac{1}{h}\Big)^{\frac{(1+\omega) r_1}{\alpha_1} + \frac{2r_2}{\alpha_2}}\Big(v(\tfrac{1}{h})\Big)^{-\frac{(1+\omega)r_1+2r_2}{3r_1}}\bigg) \nonumber\\
=& o(1) \quad \textrm{as} \quad h\rightarrow 0.
\end{align}

Now we consider the second sum in (\ref{splitup}). Due to (\ref{RijBound}) and $(1+|r_h(\bm{t},\bm{t}^\prime)|)^{-1}\geq 1-|r_h(\bm{t},\bm{t}^\prime)|$, we have
\begin{align*}
\zeta_h(\bm{t},\bm{t}^\prime) \leq \frac{4|r_h(\bm{t},\bm{t}^\prime)|}{\pi(1-\eta^2)^{1/2}}\exp{\big(-(1-|r_h(\bm{t},\bm{t}^\prime)|)\theta^2\big)}.
\end{align*}
%
%
Since $\theta^2=O(\log{\frac{1}{h}})$ and $\exp(-\theta^2)=O(h^{-2r_1})$, we have $\exp{\big(-(1-|r_h(\bm{t},\bm{t}^\prime)|)\theta^2\big)} = O(h^{-2r_1})$ for $(\bm{t},\bm{t}^\prime)\in \mathcal{G}_{h,\gamma,\theta}^{(2)}$ by using (\ref{SupGauss2}). 
Hence 
%
when $h$ is sufficiently small, there exists a constant $C>0$ such that 
\begin{align}
\sup_{(\bm{t},\bm{t}^\prime)\in \mathcal{G}_{h,\gamma,\theta}^{(2)}} \zeta_h(\bm{t},\bm{t}^\prime) \leq C h^{2r_1} \frac{v((N_{h,\delta}^{(1)})^{\omega/r_1}\gamma \theta^{-2/\alpha_1})}{[\log((N_{h,\delta}^{(1)})^{\omega/r_1}\gamma \theta^{-2/\alpha_1})]^{2r_1/\alpha_1+2r_2/\alpha_2}}.
\end{align}
Therefore it follows from (\ref{SupGauss2}) that 
\begin{align}\label{part2sum}
\sum_{(\bm{t},\bm{t}^\prime)\in \mathcal{G}_{h,\gamma,\theta}^{(2)}} \zeta_h(\bm{t},\bm{t}^\prime) &= O\left(h^{2r_1}(N_{h,\delta}^{(1)})^2 (N_{h}^{(2)})^2\frac{v((N_{h,\delta}^{(1)})^{\omega/r_1}\gamma \theta^{-2/\alpha_1})}{[\log((N_{h,\delta}^{(1)})^{\omega/r_1}\gamma \theta^{-2/\alpha_1})]^{2r_1/\alpha_1+2r_2/\alpha_2}}\right) \nonumber\\
&=O\left(\frac{(\log\frac{1}{h})^{2r_1/\alpha_1+2r_2/\alpha_2}v((N_{h,\delta}^{(1)})^{\omega/r_1}\gamma \theta^{-2/\alpha_1})}{\bigg[\log \bigg(h^{-\omega} \Big((\log \frac{1}{h})^{1/\alpha_1}v(\frac{1}{h})^{-1/3r_1}\Big)^{\omega-1}\bigg)\bigg]^{2r_1/\alpha_1+2r_2/\alpha_2}\big(v(\frac{1}{h})\big)^{2/3}}\right) \nonumber\\
&=o(1) \quad \textrm{as} \quad h\rightarrow 0.
\end{align}
Combining (\ref{TripleSum}), (\ref{part1sum}) and (\ref{part2sum}), we obtain (\ref{G3}).
%
%
\end{proof}

{\bf Proof of Theorem~\ref{ProbMain}}
\begin{proof}
We choose the same $\gamma=\gamma(h)$ in Lemma~\ref{lemma3.6}, and use $\theta=\theta_{h,z}$ given in (\ref{ThetaExp}). Fix a small $\delta>0$. By using (\ref{G1}), (\ref{G2}), and (\ref{G3}), we have that as $h\rightarrow0$,
\begin{align*}
\mathbb{P}_{Z_h}(\theta, \mathcal{M}_{h}) & = \prod_{k\leq m_h} \mathbb{P}_{Z_h}(\theta,  (J_{k,h}^\delta \times \mathcal {M}_{h,2}) \cap\Gamma_{h,\gamma,\theta})+o(1)\\
&= \exp\bigg\{\sum_{k\leq m_h}\log{\Big(1-\mathbb{Q}_{Z_h}(\theta,  (J_{k,h}^\delta \times \mathcal {M}_{h,2}) \cap\Gamma_{h,\gamma,\theta})\Big)}\bigg\}+o(1)\\
&=\exp\bigg\{-(1+ o(1))\sum_{k\leq m_h}\mathbb{Q}_{Z_h}(\theta,  (J_{k,h}^\delta \times \mathcal {M}_{h,2}) \cap\Gamma_{h,\gamma,\theta})\bigg\}+ o(1).
\end{align*}
Then by using (\ref{G2T}), (\ref{G1T}), and (\ref{SumToInt}), we get
\begin{align*}
\mathbb{P}_{Z_h}(\theta, \mathcal{M}_{h}) & = \exp\big\{-(1+o(1))h^{-r_1} \;\theta^{2(r_1/\alpha_1+r_2/\alpha_2)}\Psi(\theta)H_{R,\pmb{\alpha}} I_h(\mathcal{M}_{h} ) \big\}+o(1).
\end{align*}
The proof is completed by noticing (\ref{PhiX}).
\end{proof}

\section{Appendix}\label{appendix}
In this appendix, we collect some miscellaneous results that are straightforward extensions from some existing results in the literature, and have been used in our proofs.\\

For an integer $\ell>0$ and $\gamma>0$, let $C(\ell,\gamma) =\{t\gamma: \; t \in [0, \ell ]^n  \cap {\mathbb Z}^n \}$. Given a structure $(E,\pmb{\alpha})$, let $H_{E,\pmb{\alpha}}(\ell,\gamma) = H_{E,\pmb{\alpha}}(C(\ell,\gamma)) $ and 
\begin{align*}
H_{E,\pmb{\alpha}}(\gamma) = {\displaystyle \lim_{\ell\rightarrow\infty}}\frac{H_{E,\pmb{\alpha}}(\ell,\gamma)}{\ell^n}.
\end{align*} 
%
%
%
%
The existence of this limit follows from Pickands \cite{Pickands:1969b}. 
Using the factorization lemma (Lemma 6.4 of Piterbarg \cite{Piterbarg:1996}) and Theorem B3 of Bickel and Rosenblatt \cite{Bickel:1973b}, we have 
\begin{lemma}\label{HGamma}
$H_{E,\pmb{\alpha}}=\lim_{\gamma\rightarrow0}\frac{H_{E,\pmb{\alpha}}(\gamma)}{\gamma^n}$.
\end{lemma}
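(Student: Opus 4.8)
The plan is to show the two inequalities $\limsup_{\gamma\to 0} H_{E,\pmb\alpha}(\gamma)/\gamma^n \le H_{E,\pmb\alpha}$ and $\liminf_{\gamma\to 0} H_{E,\pmb\alpha}(\gamma)/\gamma^n \ge H_{E,\pmb\alpha}$ separately, reducing everything to the one-block case via the factorization lemma. First I would invoke the factorization lemma (Lemma 6.4 of Piterbarg~\cite{Piterbarg:1996}), which gives $H_{E,\pmb\alpha}=\prod_{i=1}^k H_{e_i,\alpha_i}$, and the analogous product formula for the discretized constants $H_{E,\pmb\alpha}(\gamma)=\prod_{i=1}^k H_{e_i,\alpha_i}(\gamma)$ — the latter holds because $|\bm t|_{E,\pmb\alpha}$ splits as a sum over the blocks $\bm t_{(i)}$, so the field $W$ decomposes as an independent sum and $\sup$ over a product grid factors into a product of $\sup$'s over the coordinate grids, whence the expectation factors. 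With $n=e_1+\cdots+e_k$ it then suffices to prove $H_{e,\alpha}=\lim_{\gamma\to0}H_{e,\alpha}(\gamma)/\gamma^e$ for a single block, i.e. the classical one-structure case, and multiply.

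For the single-block statement the key device is Theorem B3 of Bickel and Rosenblatt~\cite{Bickel:1973b}, which compares the continuous Pickands constant with its discretized version and controls the discretization error uniformly in the mesh. Concretely, I would cover the cube $[0,T]^e$ by the grid $C(\ell,\gamma)$ with $\ell = \lfloor T/\gamma\rfloor$, so $C(\ell,\gamma)\subset[0,T]^e$ and the grid is $\gamma$-dense. Since $H_{E,\pmb\alpha}(\cdot)$ is monotone in the index set, $H_{e,\alpha}(\ell,\gamma)\le H_{e,\alpha}([0,T]^e)$, and dividing by $(\ell\gamma)^e\sim T^e$ and letting $T\to\infty$ gives $H_{e,\alpha}(\gamma)/\gamma^e\le H_{e,\alpha}$ for every $\gamma>0$, which yields the $\limsup$ bound immediately. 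For the reverse inequality one enlarges slightly: every point of $[0,T]^e$ lies within $\gamma\sqrt e$ of a grid point, and the oscillation of $W$ over such small cubes is controlled in $L^1$ by the a.s.\ continuity and stationarity-of-increments structure of $W$ (the $|\bm t-\bm s|_{E,\pmb\alpha}$ modulus), so $\mathbb E\exp(\sup_{[0,T]^e}W)\le e^{\epsilon(\gamma)}\mathbb E\exp(\sup_{C(\ell',\gamma)}W)$ with $\ell'=\lceil T/\gamma\rceil$ and $\epsilon(\gamma)\to0$ as $\gamma\to0$; dividing by $T^e$, sending $T\to\infty$, then $\gamma\to0$ gives $H_{e,\alpha}\le\liminf_{\gamma\to0}H_{e,\alpha}(\gamma)/\gamma^e$. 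This oscillation estimate is exactly what Theorem B3 of~\cite{Bickel:1973b} packages.

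The main obstacle is making the discretization-error term $\epsilon(\gamma)$ rigorous and uniform: one must bound $\mathbb E\big[\exp(\sup_{[0,T]^e}W)-\exp(\sup_{C(\ell',\gamma)}W)\big]$ by something of the form $(\text{const})\cdot T^e\cdot o_\gamma(1)$, with the $T^e$ factor matching the normalization and the $o_\gamma(1)$ independent of $T$. This requires a tail/modulus-of-continuity bound on $W$ on unit cubes (Borell–TIS together with the Hölder-type metric entropy coming from the exponents $\alpha_i\le 2$) and a summation over the $O(T^e)$ unit sub-cubes; since $W$ has stationary increments in the relevant sense, each unit-cube contribution is identical, so the per-cube error tends to $0$ with $\gamma$ and the total is $O(T^e\,o_\gamma(1))$ as needed. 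Once the single-block limit is established, the product over $i=1,\dots,k$ and the two factorization identities close the argument, since $\prod_i \big(H_{e_i,\alpha_i}(\gamma)/\gamma^{e_i}\big)\to\prod_i H_{e_i,\alpha_i}=H_{E,\pmb\alpha}$ and the product of the numerators is $H_{E,\pmb\alpha}(\gamma)/\gamma^n$.
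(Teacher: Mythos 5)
Your proposal is correct and follows essentially the same route as the paper, which proves the lemma exactly by combining the factorization lemma (Lemma 6.4 of Piterbarg) with Theorem B3 of Bickel and Rosenblatt; your block-wise factorization of $H_{E,\pmb{\alpha}}(\gamma)$ and the single-block discrete-versus-continuous comparison are precisely the two cited ingredients, with the monotonicity/oscillation details you add being a reasonable fleshing-out of what Theorem B3 supplies.
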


Let $\Gamma_{E,\pmb{\alpha}}(\gamma,u) = \{(x_1,\cdots,x_k)\in\mathbb{R}^n: \; x_i = \gamma u^{-2/\alpha_i}{\mathbf{\ell}_i}, \mathbf{\ell}_i\in\mathbb{Z}^{e_i}, i=1,\cdots,k\}.$ 
%
The following result extends Lemma 4.2 in Qiao and Polonik \cite{Qiao:2018} from assuming a simple structure with $E=\{n\}$ and a scalar $0<\pmb{\alpha}\leq2$ to a more general structure. The proof uses similar ideas and therefore is omitted. Also see Lemma 3 of Bickel and Rosenblatt \cite{Bickel:1973b}, and Lemma 7.1 of Piterbarg \cite{Piterbarg:1996}. 
\begin{lemma}\label{Piece}
Given a structure $(E,\pmb{\alpha})$, let $X(\bm{t})$, $\bm{t}\in\mathbb{R}^{n}$, be a centered homogeneous Gaussian field with covariance function $r(\bm{t})=\mathbb{E}(X(\bm{t}+\bm{s})X(\bm{s}))=1-|\bm{t}|_{E,\pmb{\alpha}}(1+(1)),$ as $\bm{t}\rightarrow0$. Then there exists $\delta_0>0$ such that for any closed Jordan measurable set $A$ of positive $n$-dimensional Lebesgue measure with diameter not exceeding $\delta_0$, the following asymptotic behavior occurs: 
%
\begin{align*}
\mathbb{P}\left( \sup_{\bm{t}\in A_{\gamma,u}} X(\bm{t}) >u\right) = \frac{H_{E,\pmb{\alpha}}(\gamma)}{\gamma^n}\mathcal{H}_n(A) \prod_{i=1}^k u^{2e_i/\alpha_i} \Psi(u) (1+o(1)),
\end{align*}
as $u\rightarrow\infty$, where $A_{\gamma,u}=A\cap \Gamma_{E,\pmb{\alpha}}(\gamma,u)$.
%
%
%
%
%
\end{lemma}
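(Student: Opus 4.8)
The plan is to run the classical double-sum (Pickands) method in the discretized form of Lemma~7.1 of Piterbarg~\cite{Piterbarg:1996} and Lemma~3 of Bickel and Rosenblatt~\cite{Bickel:1973b}, with the Euclidean modulus replaced throughout by the structure module $|\cdot|_{E,\pmb{\alpha}}$. The governing device is the block-diagonal rescaling $g_u:\mathbb{R}^n\mapsto\mathbb{R}^n$, $g_u(\bm{x})_{(i)}=u^{-2/\alpha_i}\bm{x}_{(i)}$, which enjoys the exact homogeneity $|g_u(\bm{x})|_{E,\pmb{\alpha}}=u^{-2}|\bm{x}|_{E,\pmb{\alpha}}$ and, under $g_u^{-1}$, carries $\Gamma_{E,\pmb{\alpha}}(\gamma,u)$ onto the fixed lattice $\{(\gamma\bm{\ell}_1^T,\dots,\gamma\bm{\ell}_k^T)^T:\bm{\ell}_i\in\mathbb{Z}^{e_i}\}$. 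First I would choose $\delta_0>0$ so small that $\tfrac12|\bm{t}|_{E,\pmb{\alpha}}\le 1-r(\bm{t})\le 2|\bm{t}|_{E,\pmb{\alpha}}$ whenever $0<\|\bm{t}\|\le\delta_0$; this two-sided control, in particular $r(\bm{t})<1$ on the punctured ball, is the only place the diameter restriction on $A$ enters and is what eventually renders well-separated points negligible.

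The first analytic ingredient is a single-cell estimate. Fix an integer $\ell$ and a box $B$ in the original coordinates that is a union of $\gamma u^{-2/\alpha_i}$-lattice cells and whose $g_u^{-1}$-image is a translate of $[0,\ell\gamma]^n$, so $B$ carries $(\ell+1)^n$ points of $\Gamma_{E,\pmb{\alpha}}(\gamma,u)$ whose rescaled images form a translate of $C(\ell,\gamma)$. I would prove that
\[
\mathbb{P}\Big(\sup_{\bm{t}\in B\cap\Gamma_{E,\pmb{\alpha}}(\gamma,u)}X(\bm{t})>u\Big)=H_{E,\pmb{\alpha}}(\ell,\gamma)\,\Psi(u)\,(1+o(1)),\qquad u\to\infty,
\]
with error uniform in the position of $B$ by homogeneity of $X$. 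This is the standard Pickands computation: condition on the value of $X$ at one reference lattice point, write that value as $u+w/u$ so its density equals $\Psi(u)e^{-w}e^{-w^2/(2u^2)}$, and check that, conditionally, the rescaled excess field $u\bigl(X(g_u(\cdot))-u\bigr)$ converges in finite-dimensional distributions to $W(\cdot)+w$, where $W$ is exactly the field with $\mathbb{E}W(\bm{t})=-|\bm{t}|_{E,\pmb{\alpha}}$ and $\mathrm{Cov}(W(\bm{t}),W(\bm{s}))=|\bm{t}|_{E,\pmb{\alpha}}+|\bm{s}|_{E,\pmb{\alpha}}-|\bm{t}-\bm{s}|_{E,\pmb{\alpha}}$ used to define $H_{E,\pmb{\alpha}}$. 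A Borell-type bound provides the domination needed to pass the limit under the $dw$-integral, and $\int_{\mathbb{R}}e^{v}\,\mathbb{P}\bigl(\sup_{C(\ell,\gamma)}W>v\bigr)\,dv=\mathbb{E}\exp\bigl(\sup_{C(\ell,\gamma)}W\bigr)=H_{E,\pmb{\alpha}}(\ell,\gamma)$. The same computation applies verbatim to any bounded union of lattice cells, with $H_{E,\pmb{\alpha}}$ of the corresponding rescaled grid set in place of $H_{E,\pmb{\alpha}}(\ell,\gamma)$.

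Next I would tile $A$, up to a thin boundary layer, by disjoint lattice-aligned translates $B_1,\dots,B_{M_u}$ of $B$; Jordan measurability gives $M_u=\mathcal{H}_n(A)(\ell\gamma)^{-n}\prod_i u^{2e_i/\alpha_i}(1+o(1))$ and makes the cells meeting $\partial A$ a vanishing fraction. Bonferroni then traps $\mathbb{P}(\sup_{A_{\gamma,u}}X>u)$ between $\sum_j\mathbb{P}(\sup_{B_j\cap\Gamma}X>u)$ and that sum minus the double sum $\sum_{j\ne j'}\mathbb{P}(\sup_{B_j\cap\Gamma}X>u,\ \sup_{B_{j'}\cap\Gamma}X>u)$. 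By the single-cell estimate the leading sum is $M_uH_{E,\pmb{\alpha}}(\ell,\gamma)\Psi(u)(1+o(1))=\mathcal{H}_n(A)\gamma^{-n}\bigl(\ell^{-n}H_{E,\pmb{\alpha}}(\ell,\gamma)\bigr)\prod_i u^{2e_i/\alpha_i}\Psi(u)(1+o(1))$, and $\ell^{-n}H_{E,\pmb{\alpha}}(\ell,\gamma)\to H_{E,\pmb{\alpha}}(\gamma)$ as $\ell\to\infty$; so, taking $\limsup$ and $\liminf$ over $u$ for fixed $\ell$ and then letting $\ell\to\infty$, both Bonferroni bounds should collapse to $\mathcal{H}_n(A)\gamma^{-n}H_{E,\pmb{\alpha}}(\gamma)\prod_i u^{2e_i/\alpha_i}\Psi(u)(1+o(1))$ --- once the double sum is shown negligible for each fixed $\ell$ with an error tending to $0$ as $\ell\to\infty$. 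For that I would bound each joint probability by $\sum_{\bm{t}\in B_j\cap\Gamma}\sum_{\bm{s}\in B_{j'}\cap\Gamma}\bar\Phi\bigl(2u/\sqrt{2+2r(\bm{t}-\bm{s})}\bigr)$, split the cell pairs by separation, handle near-diagonal pairs by inclusion--exclusion together with the single-cell estimate applied also to $B_j\cup B_{j'}$ (each such pair then contributing $o(\ell^n)\,\Psi(u)$), and control the remaining pairs with $1-r(\bm{t}-\bm{s})\ge\tfrac12|\bm{t}-\bm{s}|_{E,\pmb{\alpha}}$, which forces $\mathrm{Var}(X(\bm{t})+X(\bm{s}))<4$ and hence rapid decay of the Gaussian tails in the separation; summing over the $O((M_u\ell^n)^2)$ point pairs is then routine.

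I expect the double-sum step to be the main obstacle, as in all Pickands-type arguments: one must arrange the pair sum so that a single bound is uniform over every inter-cell distance and so that, after normalizing by $\prod_i u^{2e_i/\alpha_i}\Psi(u)$, the remainder genuinely tends to $0$ once $\ell$ is taken large. This is where the smallness of $\delta_0$ and the two-sided control of $1-r$ near the origin are essential; everything else --- the rescaling, the conditioning argument, the counting of cells and lattice points --- parallels Lemma~4.2 of Qiao and Polonik~\cite{Qiao:2018} with $|\cdot|_{E,\pmb{\alpha}}$ replacing a single power of the Euclidean norm.
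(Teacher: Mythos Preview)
Your proposal is correct and follows precisely the approach the paper indicates: the paper omits the proof entirely, stating only that it ``uses similar ideas'' to Lemma~4.2 of Qiao and Polonik~\cite{Qiao:2018} (with the single-$\alpha$ structure replaced by the general $(E,\pmb{\alpha})$), together with Lemma~3 of Bickel and Rosenblatt~\cite{Bickel:1973b} and Lemma~7.1 of Piterbarg~\cite{Piterbarg:1996}. Your sketch---block-diagonal rescaling $g_u$, single-cell Pickands conditioning yielding $H_{E,\pmb{\alpha}}(\ell,\gamma)\Psi(u)$, tiling plus Bonferroni, and the near/far split of the double sum---is exactly that argument carried out in detail.
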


The next theorem is similar to Theorem 7.1 of Piterbarg \cite{Piterbarg:1996}, except that the supremum is over a dense grid. The proof is similar, where one need to replace the role of Lemma 7.1 of Piterbarg \cite{Piterbarg:1996} by our Lemma~\ref{Piece} above. 
\begin{theorem} \label{Theorem7.1ana}
Let $X(\bm{t})$, $\bm{t}\in A\subset\mathbb{R}^n$ be a locally-$(E,\pmb{\alpha},D_{\bm{t}})$-stationary Gaussian field with zero mean, where $A$ is a closed Jordan set of positive $n$-dimensional Lebesgue measure. Assume also that the matrix-valued function $D_{\bm{t}}$ is continuous in $\bm{t}$ and non-singular everywhere on $A$. Then if $r_X(\bm{t},\bm{s})<1$ for all $\bm{t},\bm{s}$ from $A$, $\bm{t}\neq\bm{s}$, the following asymptotic behavior occurs:
\begin{align*}
\mathbb{P}\left( \sup_{\bm{t}\in A_{\gamma,u}} X(\bm{t}) >u\right) = \frac{H_{E,\pmb{\alpha}}(\gamma)}{\gamma^n}\int_A |\det D_{\bm{t}}| d\bm{t} \prod_{i=1}^k u^{2e_i/\alpha_i} \Psi(u) (1+o(1)),
\end{align*}
as $u\rightarrow\infty$, where $A_{\gamma,u}=A\cap \Gamma_{E,\pmb{\alpha}}(\gamma,u)$.
\end{theorem}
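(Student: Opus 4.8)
The plan is to prove Theorem~\ref{Theorem7.1ana} by reducing it to Lemma~\ref{Piece} via a local linearization of the anisotropic metric induced by $D_{\bm t}$, exactly paralleling the passage from Lemma 7.1 to Theorem 7.1 in Piterbarg~\cite{Piterbarg:1996}. First I would fix a small $\varkappa>0$ and partition the compact set $A$ into finitely many closed Jordan pieces $A_1,\dots,A_{L}$, each of diameter at most $\delta_0$ (with $\delta_0$ as in Lemma~\ref{Piece}) and small enough that, by the assumed continuity of $\bm t\mapsto D_{\bm t}$, the matrix $D_{\bm t}$ varies by at most $\varkappa$ (in operator norm) over each $A_\ell$. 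On each piece pick a reference point $\bm t_\ell\in A_\ell$ and compare $X$ restricted to $A_\ell$ with the stationary field $X^{(\ell)}$ of Lemma~\ref{Piece} whose covariance is $1-|D_{\bm t_\ell}(\bm t-\bm s)|_{E,\pmb\alpha}(1+o(1))$: the local-$(E,\pmb\alpha,D_{\bm t})$-stationarity of $X$ together with the near-constancy of $D_{\bm t}$ on $A_\ell$ gives the standard two-sided comparison of covariances, so Slepian/Berman-type inequalities sandwich $\mathbb P(\sup_{A_\ell\cap\Gamma_{E,\pmb\alpha}(\gamma,u)}X>u)$ between the corresponding quantities for slightly inflated and deflated matrices $(1\pm C\varkappa)D_{\bm t_\ell}$.

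Next I would apply Lemma~\ref{Piece} to each stationary comparison field. After the linear change of variables $\bm t\mapsto D_{\bm t_\ell}\bm t$, the grid $\Gamma_{E,\pmb\alpha}(\gamma,u)$ maps (up to the $o(1)$ distortion absorbed in the $(1\pm C\varkappa)$ factors) to the standard grid, and Lemma~\ref{Piece} yields
\begin{align*}
\mathbb P\Big(\sup_{\bm t\in A_\ell\cap\Gamma_{E,\pmb\alpha}(\gamma,u)}X(\bm t)>u\Big)=\frac{H_{E,\pmb\alpha}(\gamma)}{\gamma^n}\,|\det D_{\bm t_\ell}|\,\mathcal H_n(A_\ell)\prod_{i=1}^k u^{2e_i/\alpha_i}\Psi(u)\,(1+o(1)+O(\varkappa)),
\end{align*}
where the Jacobian $|\det D_{\bm t_\ell}|$ appears because the $n$-dimensional volume is scaled by it. Summing over $\ell$, the main term becomes a Riemann sum $\sum_\ell |\det D_{\bm t_\ell}|\,\mathcal H_n(A_\ell)$, which converges to $\int_A|\det D_{\bm t}|\,d\bm t$ as $\varkappa\to0$ since $\bm t\mapsto|\det D_{\bm t}|$ is continuous (hence Riemann integrable) on the compact Jordan set $A$.

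The two remaining ingredients are the usual ones. First, one must control the cross terms: by the global condition $r_X(\bm t,\bm s)<1$ for $\bm t\neq\bm s$ and compactness, $\sup\{r_X(\bm t,\bm s):\bm t\in A_\ell,\bm s\in A_{\ell'},\ \|\bm t-\bm s\|\ge\rho_0\}<1$ for any fixed separation $\rho_0$, so a Bonferroni/Borell argument (as in the proof of Theorem~\ref{fixedmanifold}) shows the probability that the grid supremum is exceeded simultaneously on two non-adjacent pieces is $o(\prod_i u^{2e_i/\alpha_i}\Psi(u))$; adjacent pieces are handled by lumping them together into a single piece of diameter $\le 2\delta_0$ and re-running the single-piece estimate, which is still valid after a further subdivision. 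Second, the double limit must be arranged correctly: the $o(1)$ from Lemma~\ref{Piece} holds as $u\to\infty$ for each fixed $\varkappa$ and each fixed partition, so one sends $u\to\infty$ first and then lets $\varkappa\to0$ (equivalently refines the partition), using that $H_{E,\pmb\alpha}(\gamma)/\gamma^n$ and the number of pieces do not depend on $u$. The main obstacle is the uniformity bookkeeping in this exchange of limits — ensuring the comparison error $O(\varkappa)$ is genuinely uniform in $u$ over each piece and that the finitely many cross-term estimates are likewise uniform — but this is routine given the continuity of $D_{\bm t}$ and the strict inequality $r_X<1$ off the diagonal, and follows the template already used for Theorem~\ref{fixedmanifold}.
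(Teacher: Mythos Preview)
Your proposal is correct and follows exactly the approach the paper indicates: the paper does not give a detailed proof but simply states that the argument is the same as that of Theorem~7.1 in Piterbarg~\cite{Piterbarg:1996}, with Lemma~\ref{Piece} playing the role of Piterbarg's Lemma~7.1. Your outline --- partition $A$ into small pieces, freeze $D_{\bm t}$ on each piece via Slepian comparison, apply Lemma~\ref{Piece} locally, sum the Riemann-type contributions to recover $\int_A|\det D_{\bm t}|\,d\bm t$, and dispose of cross terms via Bonferroni/Borell --- is precisely this template.
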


The following lemma is analogous to Lemma~\ref{fixedanxillary} with the index set being a grid. The proof is also similar to that of Lemma~\ref{fixedanxillary}, except that in the proof we use Theorem~\ref{Theorem7.1ana} to replace the role of Theorem 7.1 of Piterbarg \cite{Piterbarg:1996}.
\begin{lemma}\label{fixedmanifolddis}
Suppose that the conditions in Theorem~\ref{fixedmanifold} hold. For any subset $U\subset \mathcal{M}$, if there exists a diffeomorphism $\psi:U\mapsto \Omega\subset\mathbb{R}^r$, where $\Omega=\psi(U)$ is a closed Jordan set of positive $r$-dimensional Lebesgue measure, then we have that as $u\rightarrow\infty$, 
\begin{align}\label{subsetextreme2}
\mathbb{P} \left(\sup_{\bm{t}\in M_{\gamma,u}} X(\bm{t}) > u\right) = \frac{H_{R,\pmb{\alpha}}(\gamma)}{\gamma^r} \int_{\mathcal{M}} \int_{U} \prod_{j=1}^k \|D_{j,\bm{t}} P_{j,\bm{t}}\|_{r_j} d\mathcal{H}_r(\bm{t})  \prod_{i=1}^k u^{2r_i/\alpha_i} \Psi(u) (1+o(1)),
\end{align}
where $M_{\gamma,u}=\psi^{-1}(\Omega\cap \Gamma_{R,\pmb{\alpha}}(\gamma,u))$.
\end{lemma}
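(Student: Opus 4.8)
The plan is to mimic the proof of Lemma~\ref{fixedanxillary} verbatim, substituting the discretized counterparts of each ingredient. First I would transfer the problem to Euclidean space: set $\wt X = X\circ\psi^{-1}$, a centered Gaussian field on $\Omega\subset\mathbb{R}^r$, and note that $\sup_{\bm{t}\in M_{\gamma,u}}X(\bm{t}) = \sup_{\wt{\bm{t}}\in \Omega\cap\Gamma_{R,\pmb{\alpha}}(\gamma,u)}\wt X(\wt{\bm{t}})$ by construction of $M_{\gamma,u}=\psi^{-1}(\Omega\cap\Gamma_{R,\pmb{\alpha}}(\gamma,u))$. Exactly as in the proof of Lemma~\ref{fixedanxillary}, a Taylor expansion of $\psi^{-1}$ together with assumption (A2) shows that $\wt X$ is locally-$(R,\pmb{\alpha},A(\wt{\bm{t}})^{1/2})$-stationary on $\Omega$, where $A(\wt{\bm{t}}) = [J_{\psi^{-1}}(\wt{\bm{t}})]^T [D_{\psi^{-1}(\wt{\bm{t}})}]^T D_{\psi^{-1}(\wt{\bm{t}})} J_{\psi^{-1}}(\wt{\bm{t}})$ is positive definite and block diagonal with blocks of sizes $r_i\times r_i$; the continuity and nonsingularity of $D_{\bm{t}}$ on $\mathcal{M}$ guarantee the continuity and nonsingularity of $A(\wt{\bm{t}})^{1/2}$ on $\Omega$, and $r_X(\bm{t},\bm{s})<1$ for distinct points carries over to $r_{\wt X}$ on $\Omega$.

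Next I would invoke Theorem~\ref{Theorem7.1ana} in place of Theorem 7.1 of Piterbarg, applied to $\wt X$ on the Jordan set $\Omega$ with index restricted to the grid $\Gamma_{R,\pmb{\alpha}}(\gamma,u)$. This yields
\begin{align*}
\mathbb{P}\left(\sup_{\wt{\bm{t}}\in\Omega\cap\Gamma_{R,\pmb{\alpha}}(\gamma,u)}\wt X(\wt{\bm{t}}) > u\right) = \frac{H_{R,\pmb{\alpha}}(\gamma)}{\gamma^r}\int_\Omega \det[A(\wt{\bm{t}})^{1/2}]\, d\wt{\bm{t}}\;\prod_{i=1}^k u^{2r_i/\alpha_i}\Psi(u)(1+o(1)),
\end{align*}
as $u\to\infty$. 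Then I would apply the area formula on manifolds (as in Lemma~\ref{fixedanxillary}) to rewrite the integral over $\Omega$ as an integral over $U$ against $d\mathcal{H}_r$, picking up the Jacobian factor $\det[B(\psi(\bm{t}))^{1/2}]^{-1}$ with $B(\psi(\bm{t})) = [J_{\psi^{-1}}(\psi(\bm{t}))]^T J_{\psi^{-1}}(\psi(\bm{t}))$. The factorization $J_{\psi^{-1}}(\psi(\bm{t})) = P_{\bm{t}}Q_{\bm{t}}$ through an orthonormal basis $P_{\bm{t}}$ of $T_{\bm{t}}\mathcal{M}$ cancels the $\det[Q_{\bm{t}}]$ terms, leaving $\det[(P_{\bm{t}}^T D_{\bm{t}}^T D_{\bm{t}} P_{\bm{t}})^{1/2}]$, which the Cauchy–Binet formula factors as $\prod_{j=1}^k \|D_{j,\bm{t}}P_{j,\bm{t}}\|_{r_j}$ exactly as in equation (\ref{determine}). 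This produces the stated right-hand side.

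There is essentially no novel obstacle here; the only point requiring care is that Theorem~\ref{Theorem7.1ana} (the grid version of Piterbarg's Theorem 7.1) must genuinely be available with the same hypotheses as used — in particular that its local-stationarity hypothesis is met by $\wt X$ and that the $o(1)$ is uniform in the relevant sense. Since Theorem~\ref{Theorem7.1ana} is stated in the appendix precisely for this purpose, the argument is a routine transcription. I would remark, as the paper does, that the proof is omitted or sketched because it parallels Lemma~\ref{fixedanxillary} line by line with Theorem~\ref{Theorem7.1ana} replacing Theorem 7.1 of Piterbarg~\cite{Piterbarg:1996}.
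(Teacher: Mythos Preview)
Your proposal is correct and matches the paper's own approach exactly: the paper states that the proof is similar to that of Lemma~\ref{fixedanxillary}, with Theorem~\ref{Theorem7.1ana} replacing Theorem 7.1 of Piterbarg~\cite{Piterbarg:1996}, and then omits the details. Your write-up simply spells out this transcription, including the change of variables via $\psi$, the local-$(R,\pmb{\alpha},A(\wt{\bm{t}})^{1/2})$-stationarity of $\wt X$, the application of Theorem~\ref{Theorem7.1ana} on the grid, and the area-formula/Cauchy--Binet rewriting---all exactly as in Lemma~\ref{fixedanxillary}.
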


\end{section}


\end{document}